\DeclarePairedDelimiter\abs{\lvert}{\rvert}
\DeclareMathOperator*{\argmin}{arg\,min}
\renewcommand\section{\@startsection {section}{1}{\z@}%
                               {-3.5ex \@plus -1ex \@minus -.2ex}%
                               {2.3ex \@plus.2ex}%
                               {\normalfont\large\bfseries}}
\renewcommand\subsection{\@startsection{subsection}{2}{\z@}%
                                 {-3.25ex\@plus -1ex \@minus -.2ex}%
                                 {1.5ex \@plus .2ex}%
                                 {\normalfont\bfseries}}
\numberwithin{equation}{section}       
\numberwithin{figure}{section}         
\numberwithin{table}{section}          
\newtheorem{prop}[theorem]{Proposition}
\newtheorem{assumption}{Assumption}
\newcommand{\vertiii}[1]{{\left\vert\kern-0.25ex\left\vert\kern-0.25ex\left\vert #1\right\vert\kern-0.25ex\right\vert\kern-0.25ex\right\vert}}
\begin{document}

\title{On the Generalised Langevin Equation for Simulated Annealing}
\author{ Martin Chak \and Nikolas Kantas \and Grigorios A. Pavliotis}

\institute{Department of Mathematics, Imperial College London, SW72AZ \\
              \email{martin.chak14@imperial.ac.uk \and n.kantas@imperial.ac.uk \and g.pavliotis@imperial.ac.uk}   
}

\date{Received: date / Accepted: date}

\maketitle

\begin{abstract}
In this paper, we consider the generalised (higher order) Langevin equation for the purpose 
of simulated annealing and optimisation of nonconvex functions. Our approach modifies the underdamped Langevin equation 
by replacing the Brownian noise with an appropriate Ornstein-Uhlenbeck process to account for memory in the system. 
Under reasonable conditions on the loss function and the annealing schedule, we establish 
convergence of the continuous time dynamics to a global minimum. In addition, we investigate the performance numerically and show better performance and
higher exploration of the state space compared to the underdamped and overdamped Langevin dynamics with the same annealing schedule.
\keywords{nonconvex optimization \and generalized Langevin equation \and simulated annealing}
\subclass{60J25, 46N10, 60J60}
\end{abstract}

\section{Introduction}
Algorithms for optimisation have received significant interest in recent years due to applications in machine learning, data science and molecular dynamics. Models in machine learning are formulated to have some loss function and parameters with respect to which it is to be minimised, where use of optimisation techniques is heavily relied upon. We refer to \cite{rev,rev1} for related discussions. Many models, for instance neural networks, use parameters that vary over a continuous space, where gradient-based optimisation methods can be used to find good parameters that generate effective predictive ability. As such, the design and analysis of such algorithms for global optimisation has been the subject of considerable research \cite{Rud} and it has proved useful to study algorithms for global optimisation using tools from the theory of stochastic processes and dynamical systems. A paradigm of the use of stochastic dynamics for the design of algorithms for global optimisation is one of simulated annealing, where overdamped Langevin dynamics with a time dependent temperature \eqref{Sim} that decreases with an appropriate cooling schedule is used to guarantee the global minimum of a nonconvex loss function $U:\mathbb{R}^n\rightarrow \mathbb{R}$:
\begin{equation}\label{Sim}
dX_t = -\nabla U(X_t)\,dt+\sqrt{2T_t}\,dW_t.
\end{equation}
Here $W_t$ is a standard $n$-dimensional Wiener process and $T_t:\mathbb{R_+}\rightarrow \mathbb{R}$ is an appropriate determinstic function of time 
often referred to as the annealing or cooling schedule. For fixed $T_t=T>0$, this is the dynamics used for the related problem of sampling from a possibly high dimensional probability measure, for example in the unadjusted Langevin algorithm \cite{dur}. Gradually decreasing $T_t$ to zero
balances the exploration-exploitation trade-off by allowing at early times larger noise to drive $X_t$ and hence sufficient mixing to escape local minima. 
Designing an appropriate annealing schedule is well-understood. We briefly mention classical references \cite{Chiang,Gem,GeH,Gid2,Gid,Hol2,Hol,Kush}, as well as the more recent \cite{larg2,larg1,weak}, where one can find
details and convergence results. In this paper we aim to consider generalised versions of \eqref{Sim} for the same purpose.

Using dynamics such as \eqref{Sim} has clear connections with sampling. When $T_t=T$ is a constant function, the invariant distribution of $X$ is
proportional to $\exp(-\frac{U(x)}{T})dx$. In addition, when $T_t$ decreases with time, the probability measure $\nu_t(dx)\propto\exp(-\frac{U(x)}{T_t})dx$ 
converges weakly to the set of global minima based on the Laplace principle \cite{hwang1980laplace}. 
One can expect that if one replaces \eqref{Sim} with a
stochastic process that mixes faster and maintains the same invariant distribution for constant temperatures, 
then the superior speed of convergence should improve performance in optimisation due to the increased exploration of the state space. 
Indeed, it is well known that many different dynamics can be used in order to sample from a given probability distribution, or for finding the minima of a function when the dynamics is combined with an appropriate cooling schedule for the temperature. Different kinds of dynamics have already been considered for sampling, e.g. nonreversible dynamics, preconditioned unadjusted Langevin dynamics \cite{prec2,mass,prec,pat}, as well as for optimisation, e.g. interacting Langevin dynamics \cite{Sun}, consensus based optimisation \cite{cbo3,tse,tse2}, to name a few.

A natural candidate in this direction is to use the underdamped or kinetic Langevin dynamics:
\begin{subequations}\label{Sim2}
\begin{align}
dX_t &= Y_t\, dt\label{Sim21}\\
dY_t &= -\nabla U(X_t)\,dt-{T_t^{-1}} {\mu} Y_t\, dt+\sqrt{2\mu}\, dW_t\label{Sim22}
\end{align}
\end{subequations}
Here the reversibility property of \eqref{Sim} has been lost; the improvement from breaking reversibility in both the context of sampling and that of optimisation
 is investigated in \cite{nonrev2,nonrev1} and \cite{Gao} respectively.
When $T_t=T$, \eqref{Sim2} can converge faster than \eqref{Sim} to its invariant distribution
\begin{equation*}
\rho(dx,dy)\propto\exp\bigg(-\frac{1}{T}\bigg(U(x)+\frac{\abs*{y}^2}{2}\bigg)\bigg)dx\:dy,
\end{equation*}
see \cite{eberle2019couplings} or Section 6.3 of \cite{pav} for particular comparisons and also \cite{chem2,fire} for more applications using variants of \eqref{Sim2}.
In the context of simulated annealing, using this set of dynamics has recently been studied rigorously in \cite{meta},
where the author established convergence to global minima using the generalised $\Gamma$-calculus \cite{gen} framework that is based on Bakry-Emery theory. Note that 
\eqref{Sim22} uses the temperature in the drift rather than the diffusion constant in the noise as in \eqref{Sim}. Both formulations admit the same invariant measure when $T_t=T$. 
In the remainder of the paper, we adopt this formulation to be closer to \cite{meta}. 

In this paper we will consider an extension of the kinetic Langevin equation by adding an additional auxiliary variable that accounts for the memory in the system. To the best of the authors' knowledge, this has not been attempted before in the context of simulated annealing and global optimisation. In particular we consider the Markovian approximation to the generalised Langevin equation:
\begin{subequations}\label{gl}
\begin{align}
dX_t &= Y_t \, dt\label{gl1}\\
dY_t &= -\nabla U(X_t) \, dt + \lambda^\top Z_t \, dt\label{gl2}\\
dZ_t &= -\lambda Y_t \, dt -T_t^{-1}A Z_t \, dt +\Sigma \, dW_t\label{gl3},
\end{align}
\end{subequations}
where $A\in\mathbb{R}^{m\times m}$ is symmetric positive definite matrix\textcolor{black}{, meaning that there exists a constant $A_c >0$ such that 
\begin{equation*}
z^\top Az \geq A_c \abs{z}^2
\end{equation*}
for all $z\in\mathbb{R}^m$}, $\Sigma\in\mathbb{R}^{m\times m}$ satisfies
\begin{equation*}
\Sigma\Sigma^\top = 2A
\end{equation*}
and $W_t$ is now $m$-dimensional. Here $X_t, Y_t\in\mathbb{R}^n$ and $Z_t\in\mathbb{R}^m$ (with $m\geq n$), $M^\top$ denotes the transpose of a matrix $M$, $\lambda\in\mathbb{R}^{m\times n}$ is a 
rank $n$ matrix with a left inverse $\lambda^{-1}\in\mathbb{R}^{n\times m}$. 

Our aim is to establish convergence using similar techniques as \cite{meta} and investigate the improvements in 
performance. 
Equation \eqref{gl} is related to the generalised Langevin equation, where memory is added to \eqref{Sim2} by integrating over past velocities with a kernel $\Gamma:\mathbb{R}_+\rightarrow \mathbb{R}^{n\times n}$:
\begin{equation}\label{gen}
\ddot{x}=-\nabla U(x)-\int_0^t \Gamma(t-s)\dot{x}(s)\, ds+F_t
\end{equation}
with $F_t$ being a zero mean stationary Gaussian process with an autocorrelation matrix given by the fluctuation-dissipation theorem 
\begin{equation*}
\mathbb{E}(F_t F_s^\top)=T_t\Gamma(t-s).
\end{equation*}
When $T_t=T$,  
\eqref{gen} is equivalent to \eqref{gl} when setting 
\begin{equation}
\Gamma(t)=\lambda^\top e^{-\!At}\lambda,
\end{equation}
and the invariant distribution becomes
\begin{equation*}
\rho(dx,dy, dz)\propto\exp\bigg(-\frac{1}{T}\bigg(U(x)+\frac{\abs*{y}^2}{2}+\frac{\abs*{z}^2}{2}\bigg)\bigg)dx\:dy\:dz,
\end{equation*}
see Section 8.2 of \cite{pav} for details\footnote{To the best of the authors' knowledge, there is no known direct translation between \eqref{gen} and \eqref{gl} for a non-constant $T_t$; such a translation quite possibly exists and at the very least the intuition here is useful.}.  
In the spirit of adding a momentum variable in \eqref{Sim} to get \eqref{Sim2}, \eqref{gl} 
adds an additional auxiliary variable to the Langevin system whilst preserving the invariant distribution in the $x$ marginal. 
In the constant temperature context, \eqref{gen} is natural from the point of statistical mechanics and has already been considered as a sampling tool in \cite{ceri1,ceri3,ceri2,ceri4} with considerable success.
We will demonstrate numerically that the additional tuning parameters can improve performance; see also \cite{glesamp} for recent work demonstrating advantages of using \eqref{gen} compared to using \eqref{Sim2} when sampling from a log concave density. A detailed study of the Markovian approximation \eqref{gl} of the generalised Langevin dynamics in \eqref{gen} can be found in \cite{ottobre2011asymptotic}.

To motivate the use of \eqref{gl}, consider the quadratic case where $U = \alpha x^2$ and $0< \alpha<1$. This case
allows for explicit or numerical calculation of the spectral gaps
of the generators in \eqref{Sim}-\eqref{gl} in order to compare the rate of convergence to equilibrium; see \cite{spec,Otto} for details. 
For a given $T$, it is possible to choose $\lambda$ and $A$, 
such that the spectral gap of the generator of \eqref{gl} is much larger than that
of \eqref{Sim2} with the best possible choice of $\mu$ being used. The latter is already larger than that of the overdamped dynamics in \eqref{Sim}. 
We will later demonstrate numerically that this will translate to 
better exploration in simulated annealing (when $T_t$ is decreasing in time).

Use of \eqref{gen} is also motivated by parallels with accelerated gradient descent algorithms. When the noise is removed from \eqref{Sim2},
the second order differential equation can be loosely considered as a continuous time version of Nesterov's algorithm \cite{nips}. 
The latter is commonly preferred to discretising the first order differential equation given by the noiseless version of \eqref{Sim}, because
in the high dimensional and low iterations setting it achieves the optimal rate of convergence for convex optimisation; 
see Chapter 2 in \cite{nest} and also \cite{Ghad} for a nonconvex setting.
Here we would like to investigate the effect of adding another auxiliary variable, 
which would correspond to a third order differential equation when noise is removed.
When noise is added for the fixed temperature case, \cite{gad} has studied the long time behaviour and 
stability for different choices of a memory kernel as in \eqref{gen}. 
Finally, we note that generalised Langevin dynamics in \eqref{gen} have additionally been studied in related 
areas such as sampling problems in molecular dynamics from chemical modelling \cite{chem,ceri1,ceri3,ceri2,ceri4,chem3}, 
see also \cite{datadriv} for work determining the kernel $\Gamma$ in the generalised system \eqref{gen} from data.


Our theoretical results will focus only on the continuous time dynamics and follow the approach in \cite{meta}. 
The main requirement in terms of assumptions
are quadratic upper and lower bounds on $U$ and bounded second derivatives. This is different to 
classical references such as \cite{GeH}, \cite{Gid2} or \cite{Hol}. These works also rely on the Poincar\'e inequality, an approach which will be mirrored here 
(and in \cite{meta} for the underdamped case) using a log-Sobolev inequality; see also \cite{Hol2} for the relationship between such functional inequalities and the annealing schedule in the finite state space case.
We will also present detailed numerical results for different 
choices of $U$. There are many possibilities for the method of discretisation of \eqref{gl}, we will use a time discretisation scheme that appeared in \cite{BB}, but will not
present theoretical results on the time discretised dynamics; this is beyond the scope of this article.
We refer instead the interested reader to \cite{Lstud} for a study on discretisation schemes for the system \eqref{gl},
\cite{cheng} for a recent consideration on \eqref{Sim2} and its time-discretisation and
\cite{Gelf,Gelf2} for linking discrete time Markov chains with the overdamped Langevin system in \eqref{Sim}.

\subsection{Contributions and organisation of the paper}
Here we summarise the main contributions of the paper.
\begin{itemize}
\item
We provide a complete theoretical analysis of the simulated annealing algorithm for the generalised Langevin equation \eqref{gl}. The main theoretical contribution consists of Theorem \ref{convergence} that
establishes convergence in probability of $X_t$ in the higher order Markovian dynamics \eqref{gl} to a global minimiser of $U$. For the optimal cooling schedule $T_t$, the rate of convergence is as the known rate for the Langevin system \eqref{Sim2} presented in \cite{meta}.
\item
The initially non-Markovian property and pronounced degeneracy in the sense of requiring a second commutator bracket for hypoellipticity by way of H\"ormander introduces additional difficulties that are overcome using techniques from \cite{meta}. As such, we use a different form of the distorted entropy, stated formally in \eqref{entdef}.
Additional technical improvements include a different truncation argument and a limiting sequence of nondegenerate SDEs for establishing dissipation of this distorted entropy. 
These extensions also address certain technical issues in \cite{meta}; see Remarks \ref{rem_assumptions}, \ref{tanirem} and \ref{trunc_issue} for more details. 
Also we make an effort to emphasise the role of the critical factor of the cooling schedule in the rate of convergence in Theorem \ref{convergence}. 
This can be seen in our assumptions for $T_t$ and $U$ below. 
\item
As a byproduct, we prove exponential convergence to equilibrium for the constant temperature $T_t=T>0$ case in Proposition \ref{constT0}, which is relevant for sampling problems. See \cite{ottobre2011asymptotic} and \cite{Vaes} for similar results.
\item
Detailed numerical experiments are provided to illustrate the performance of our approach. We also discuss 
thoroughly tuning issues. In particular, we investigate numerically the role of matrix $A$ and how it can be chosen to increase exploration of the state space. 
As regards to time discretisation of \eqref{gl} we use the leapfrog scheme of \cite{BB}. We compare this with a similar time discretisation of \eqref{Sim2} and observe that exploration of the state space is increased considerably.
\end{itemize}
The paper is organised as follows. Section \ref{main} will present the assumptions and main theoretical results. Detailed proofs can be found in Section \ref{proofs}.
Section \ref{num} presents numerical results demonstrating the effectiveness of our approach both in terms of reaching the 
global minimum and the exploration of the state space. In Section \ref{conclusion}, we provide some concluding remarks.

\section{Main Result}\label{main}

Let $L_t$ denote the infinitesimal generator of the associated semigroup to \eqref{gl} at $t>0$ and temperature $T_t$. This is formally given by
\begin{equation}\label{glgen}
L_{t} = (y\cdot\nabla\!_x-\nabla\!_x U(x)\cdot\nabla\!_y)+(z^\top\lambda\nabla\!_y-y^\top\lambda^\top\nabla\!_z)-T_t^{-1}z^\top  A \nabla\!_z +A:D_z^2,
\end{equation}
where we denote the gradient vector as
$\nabla_x=(\partial_{x_1},\ldots,\partial_{\!x_n})^\top$, the Hessian with $D_x^2$ and similarly for the $y$ and $z$ variables. 
For matrices $M,N\in\mathbb{R}^{r\times r}$ we denote $M:N=\sum_{i,j}M_{ij}N_{ij}$ for all $1\leq i,j\leq r$ 
and the operator norm as $|{M}| =\sup \left\{{\frac {|Mv|}{|v|}}:v\in \mathbb{R}^{r}{\text{ with }}x\neq 0\right\}$.
We will also use $|v|$ to denote Euclidean 
distance for a vector $v$.
Let $m_t$ be the law of $(X_t,Y_t,Z_t)$ in \eqref{gl} and, with slight abuse of notation, we will also denote as $m_t$ the corresponding Lebesgue density. Similarly we define
$\mu_{T_t}$ be the instantaneous invariant law of the process
\begin{equation}\label{equilibrium}
\mu_{T_t}(dx,dy,dz)\quad =\quad 
\frac{1} {Z_{T_t}}{\exp\bigg(-\frac{1}{T_t}\bigg(U(x)+\frac{\abs*{y}^2}{2}+\frac{\abs*{z}^2}{2}\bigg)\bigg)}dx\:dy\:dz 
\end{equation}
with $Z_{T_t}=\int \exp{\big(-\frac{1}{T_t}\big(U(x)+\frac{\abs*{y}^2}{2}+\frac{\abs*{z}^2}{2}\big)\big)}dx\:dy\:dz$. 
Finally, denote the density between the two laws:
\begin{equation}\label{ht}
h_t\quad=\quad\frac{dm_t}{d\mu_{T_t}}.
\end{equation}

We proceed by stating our assumptions on the potential $U$.

\begin{assumption}\label{assumption1}
\textcolor{white}{a}\\[0.5em]
There exists $0<\theta<1$ such that $U$ is smooth (belongs in $\mathcal{C}^\infty(\mathbb{R}^n)$) with bounded second derivatives 
\begin{align}
|{D_x^2 U}|_{\infty} := \sup_{x\in\mathbb{R}^n} \max\bigg(\sup_{ij}\abs*{\partial_i\partial_j U(x)},|{D_x^2 U(x)}|\bigg)< \infty, \label{secbdd}
\end{align}
satisfies 
\begin{align}
\nabla\!_x U(x)\cdot x &\geq r_1\abs{x}^2-U_{\!g}\label{q2}\\
\abs{\nabla\!_x U(x)}^2 &\leq r_2\abs{x}^2+U_{\!g}\label{q3}
\end{align}
for some constants $r_1, r_2\in\mathbb{R}$, $U_{\!g}>0$ and either 
\begin{enumerate}[label=(\alph*)]
\item\begin{equation}
\abs*{\bar{a}\circ x}^2+U_{\!m}\quad\leq \quad U(x)\quad\leq\quad \abs*{\bar{a}\circ x}^2+U_{\!M}\label{q1}
\end{equation}
for some $U_{\!m},U_{\!M}\in\mathbb{R}$, $\bar{a}\in\mathbb{R}_+^n$, where $\circ$ denotes the Hadamard product, or
\item
\begin{itemize}
\item $U$ is a nonnegative Morse function, in the sense that there exists $1\leq C_H<\infty$ such that if $x\in\mathbb{R}^n$ satisfies $\nabla\!_x U(x) = 0$, then
\begin{equation*}
\frac{\abs*{\xi}^2}{C_H}\leq \langle \xi,D_x^2 U(x) \xi\rangle\leq C_H \abs*{\xi}^2\qquad \forall \xi\in\mathbb{R}^n,
\end{equation*}
\item $U$ is nondegenerate in the sense that:
\begin{itemize}
\item For any two local minima $m_i,m_j\in\mathbb{R}^n$, there exists a unique (communicating saddle) point $s_{i,j}\in\mathbb{R}^n$ such that 
\begin{itemize}
\item $\nabla\!_x U(s_{i,j}) = 0$,
\item $U(s_{i,j}) = \inf\{\max_{s\in [0,1]}U(\gamma(s)):\gamma\in C([0,1],\mathbb{R}^n),\gamma(0) = m_i, \gamma(1) = m_j\}$,
\item the dimension of the unstable subspace of $D_x^2 U(s_{i,j})$ is equal to $1$.
\end{itemize}
\item Setting $m_1$ to be the global minimum of $U$, there exists $\delta >0$ and an ordering of the local minima $\{m_2,m_3,\dots\}$ such that $U(s_{1,2}) - U(m_2) \geq U(s_{1,i}) - U(m_i) + \delta$ for all $i\geq 3$.
\end{itemize}
\end{itemize}
\end{enumerate}
\end{assumption}
Note that \eqref{q2} and \eqref{q3} imply
\begin{equation}\label{q1menz}
a_m\abs*{x}^2+U_{\!m}\quad\leq \quad U(x)\quad\leq\quad a_M\abs*{x}^2+U_{\!M}
\end{equation}
for some $a_m, a_M >0$, $U_{\!m},U_{\!M} \in\mathbb{R}$.
In the rest of the paper, if \eqref{q1} holds then the smallest and largest element of $\bar{a}$ is denoted with
\begin{equation*}
a_m = \min_i \bar{a}_i,\quad a_M = \max_i \bar{a}_i,
\end{equation*}
where $\bar{a} = (\bar{a}_1,\dots,\bar{a}_n)$.\\
\begin{assumption}\label{assumption2}
The temperature $T_t$ satisfies
\begin{equation*}
\lim_{t\rightarrow\infty}T_t=0.
\end{equation*}
\end{assumption}
Before we proceed with further assumptions on the annealing schedule $T_t$ and on the initial distribution, note that under Assumption \ref{assumption1} and \ref{assumption2}, a log-Sobolev inequaity holds.
\begin{prop}\label{logsobprop0}
Under Assumption \ref{assumption1} and Assumption \ref{assumption2}, there exists constants $t_{ls}^{(0)},\hat{E}, A_*^{(0)}>0$ and a finite order polynomial $r^{(0)}:\mathbb{R}_+\rightarrow\mathbb{R}_+$ with coefficients depending on $U$ such that for all $0<h\in C^\infty(\mathbb{R}^{2n+m})$,
\begin{equation}\label{logSob0}
\int h \ln h d\mu_{T_t} \leq C_t^{(0)} \int\frac{\abs*{\nabla h}^2}{h}d\mu_{T_t},
\end{equation}
where for $t>t_{ls}^{(0)}$,
\begin{equation}\label{logSobConst0}
C_t^{(0)} = r^{(0)}\Big(T_t^{-\frac{1}{2}}\Big) e^{\hat{E}T_t^{-1}}.
\end{equation}
\end{prop}
The proof is deferred and the constant $\hat{E}$ from the above proposition will be used in stating the following assumption about $T_t$, as well as what follows. In the case of \eqref{q1}, $\hat{E}$ can be taken as $U_M - U_m$, otherwise it is the critical depth \cite{Sch} of $U$.
\begin{assumption}\label{assumption3}
\textcolor{white}{a}\\[0.5em]
The temperature $T_\cdot:[0,\infty) \rightarrow (0,\infty)$ is continuously differentiable, bounded above and there exists some constant \textcolor{black}{$t_0>1$} such that $T_t$ satisfies for all $t>t_0$: 
\begin{enumerate}[label=(\roman*)]
\item $T_t\geq E(\ln t)^{-1}$ for some constant $E>\hat{E}\geq 0$, where $\hat{E}$ is the constant in Proposition \ref{logsobprop0},
\item $\abs*{T_t'} \leq \widetilde{T}t^{-1}$  for some constant $\widetilde{T}>0$.
\end{enumerate}
\end{assumption}
\begin{assumption}\label{assumption4}
\textcolor{white}{a}\\[0.5em]
The initial law $m_0$ admits a bounded density with respect to the Lebesgue measure on $\mathbb{R}^{2n+m}$, also denoted $m_0$, satisfying:
\begin{enumerate}[label=(\roman*)]
\item $ {m_0}\in\mathcal{C}^\infty(\mathbb{R}^{2n+m})$, 
\item $\int\frac{\abs*{\nabla m_0}^2}{m_0}dxdydz < \infty$,
\item \textcolor{black}{$\int (\abs*{x}^2+\abs*{y}^2+\abs*{z}^2) m_0\: dxdydz < \infty$,}
\end{enumerate}
\end{assumption}
\vspace{1em}
\begin{remark}\label{rem_assumptions}
Note that \eqref{q3} and \eqref{q1} deviate from \cite{meta}. \eqref{q1} is useful for a more self-contained exposition for the log-Sobolev constant in \eqref{logSobConst}, but the alternative that $U$ is a nonnegative nondegenerate Morse function is optimal in the sense that $\hat{E}$ is in this case given as the critical depth of $U$. Condition \eqref{q1} is satisfied for instance by a multivariate Gaussian after a rotation of the $x$ coordinates.
\end{remark}

We present two key propositions.
\begin{prop}\label{refer1}
Under Assumptions \ref{assumption1} and \ref{assumption3}, for all $t>0$, denote by $\big(X^{T_t},Y^{T_t},Z^{T_t}\big)$ a r.v. with distribution $\mu_{T_t}$. For any $\delta,\alpha>0$, there exists a constant $\hat{A}>0$ such that 
\begin{equation*}
\mathbb{P}\big(U\big(X^{T_t}\big)>\min U+\delta\big) \leq \hat{A}e^{-\frac{\delta-\alpha}{T_t}}
\end{equation*}
holds for all $t>0$.
\end{prop}
\begin{proof}
The result follows exactly as in Lemma 3 in \cite{meta}.
\end{proof}
\begin{prop}\label{refer2}
Under Assumptions \ref{assumption1}, \ref{assumption3} and \ref{assumption4}, for all $t>0$, $(X_t,Y_t,Z_t)$ are well defined as the unique strong solution to \eqref{gl}, $\mathbb{E}\big[\abs*{X_t}^2+\abs*{Y_t}^2+\abs*{Z_t}^2\big]<\infty$ and the law $m_t$ admits an everywhere positive density with respect to the Lebesgue measure on $\mathbb{R}^{2n+m}$.
\end{prop}
For the proof of Proposition \ref{refer2}, see Proposition \ref{refer2app} in Section \ref{proofs}.

Proposition \ref{refer1} can be thought of as a Laplace principle; Proposition \ref{refer2} asserts that the process
\eqref{gl} does not blow up in finite time and the noise in the dynamics \eqref{gl3} for $Z_t$ spreads throughout the system, that is to $X_t$ and $Y_t$.

\begin{prop}\label{dissipation0}
Under Assumption \ref{assumption1}, \ref{assumption3} and \ref{assumption4}, for any $\alpha>0$, there exists some constant $B>0$ and $t_h>0$, 
such that for all $t>t_h$,
\begin{equation}
\int h_t\ln h_td\mu_{T_t}\leq B\bigg( \frac{1}{t}\bigg)^{1-\frac{\hat{E}}{E}-2\alpha}.
\end{equation}
\end{prop}
The full proof is the contained in Section \ref{proofs} and follows from Proposition \ref{dissipation}.
Therein a similar statement is proved for the \emph{distorted entropy} that has the following form:
\begin{equation*}
H(t)\ :=\ \int\bigg( \frac{\langle S\nabla h_t ,\nabla h_t\rangle}{h_t}+ \beta(T_t^{-1})h_t \ln(h_t)\bigg)d\mu_{T_t},
\end{equation*}
where $S$ being a well chosen matrix (so that \eqref{gamineq} holds) and $\beta(\cdot)$ is a polynomial 
(see \eqref{entdef} for the precise form of $H(t)$ and \eqref{betdef} for $\beta(\cdot)$). 
This construction of $H$ compared to a standard definition of entropy compensates for the fact that the diffusion is degenerate (see \cite{Hypo} for a general discussion).
The proof uses an approximating sequence of SDE's, in which all of the elements have nondegenerate noise. The problem is split into the partial 
time and partial temperature derivatives where, amongst other tools, \eqref{gamineq} and a log-Sobolev inequality are used as in \cite{meta} to arrive at a bound that allows a Gr{\"o}nwall-type argument.

\begin{remark}
Proposition \ref{dissipation} is a statement about the distorted entropy $H(t)$, which bounds the entropy $\int  h_t\ln h_td\mu_{T_t}$.
In fact this is achieved in such a way that the bound becomes less sharp as $t$ becomes large but without consequences for our main Theorem \ref{convergence} below. 
\end{remark}

We proceed with the statement of our main result, using $t_h$ from Proposition \ref{dissipation0}.

\begin{theorem}\label{convergence}
Under Assumptions \ref{assumption1}, \ref{assumption2}, \ref{assumption3} and \ref{assumption4}, for any $\delta>0$, as $t\rightarrow\infty$,
\begin{equation*}
\mathbb{P}(U(X_t)\leq \min U +\delta)\rightarrow1.
\end{equation*}
If in addition $T_t=E(\ln t)^{-1}$, then for any $\alpha>0$, there exists a constant $C>0$ such that for all $t>t_h$,
\begin{equation*}
\mathbb{P}(U(X_t)\leq \min U +\delta)\leq C \bigg(\frac{1}{t}\bigg)^{r(E)},
\end{equation*}
{where the exponential rate $r^e:(\hat{E},\infty)\rightarrow\mathbb{R}$ is defined by}

\begin{align*}
r^e(E)&:=\min\bigg(\frac{1-\frac{\hat{E}}{E}-2\alpha}{2},\frac{\delta-\alpha}{E}\bigg)\\
&=
\begin{cases}
\frac{1}{2}\Big(1-\frac{\hat{E}}{E}-2\alpha\Big) &\qquad\textrm{if } E < \frac{\hat{E}+2(\delta-2\alpha)}{1-2\alpha}\\
\frac{\delta-2\alpha}{E} &\qquad\textrm{otherwise}.
\end{cases}
\end{align*}
\end{theorem}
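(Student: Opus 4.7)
The plan is to reduce the theorem to a quantitative combination of Propositions \ref{refer1} and \ref{dissipation0}. The first controls the instantaneous equilibrium $\mu_{T_t}$ in a Laplace-principle style, while the second bounds the relative entropy of the true law $m_t$ against $\mu_{T_t}$. What is missing is a way to transfer an event bound from $\mu_{T_t}$ to $m_t$, and this will be supplied by Pinsker's inequality applied to $h_t=dm_t/d\mu_{T_t}$, whose regularity and integrability are guaranteed by Proposition \ref{refer2}.

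Concretely, first I would write, for any measurable set $\Omega$,
\begin{equation*}
|m_t(\Omega) - \mu_{T_t}(\Omega)| \;\leq\; \|m_t - \mu_{T_t}\|_{\mathrm{TV}} \;\leq\; \sqrt{\tfrac{1}{2}\int h_t\ln h_t\,d\mu_{T_t}}.
\end{equation*}
Taking $\Omega=\{(x,y,z):U(x)>\min U+\delta\}$ and combining with Proposition \ref{refer1} yields
\begin{equation*}
\mathbb{P}(U(X_t)>\min U+\delta)\;\leq\;\hat{A}\,e^{-(\delta-\alpha)/T_t}+\sqrt{\tfrac{1}{2}\int h_t\ln h_t\,d\mu_{T_t}}.
\end{equation*}
Invoking Proposition \ref{dissipation0} with its small parameter renamed to $\alpha/2$, so that the $2\alpha$ appearing there becomes $\alpha$, bounds the second term by $\sqrt{B/2}\,t^{-(1-(U_M-U_m)/E-\alpha)/2}$ for $t$ sufficiently large.

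For the unconditional convergence statement, the first term vanishes as $t\to\infty$ because $T_t\to 0$ and any $\alpha<\delta$ may be chosen, while the second vanishes since Assumption \ref{assumption}(2)(ii) forces $E>U_M-U_m$, making the exponent strictly positive once $\alpha$ is small enough. For the quantitative rate under $T_t=E(\ln t)^{-1}$ the first term simplifies to $\hat{A}\,t^{-(\delta-\alpha)/E}$, so both contributions are power laws and the slower one governs. This gives
\begin{equation*}
\mathbb{P}(U(X_t)>\min U+\delta)\;\leq\;C\,t^{-r(E)},\qquad r(E)=\min\!\left(\tfrac{\delta-\alpha}{E},\ \tfrac{1-(U_M-U_m)/E-\alpha}{2}\right),
\end{equation*}
and the case distinction in the theorem is obtained by solving $\tfrac{\delta-\alpha}{E}=\tfrac{1-(U_M-U_m)/E-\alpha}{2}$ for $E$, which rearranges to $E=(U_M-U_m+2(\delta-\alpha))/(1-\alpha)$.

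There is no serious obstacle once the three propositions are in hand; the argument is the standard Laplace-principle-plus-Pinsker pipeline from the simulated annealing literature. The only bookkeeping to watch is the reconciliation of the two independent small parameters appearing in Propositions \ref{refer1} and \ref{dissipation0} together with the factor $1/2$ inherited from Pinsker; all three are absorbed into the single $\alpha$ of the theorem statement by an innocuous relabeling. The genuine work is upstream, in proving Proposition \ref{dissipation0}, where the hypocoercive distorted entropy and the polynomial $\beta$ specific to the generalised Langevin structure must be designed so that the dissipation estimate yields the precise exponent $1-(U_M-U_m)/E$ expected from the log-Sobolev constant.
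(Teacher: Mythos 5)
Your proposal is correct and follows essentially the same route as the paper's proof: bound the event probability by the Laplace-principle estimate under the instantaneous equilibrium (Proposition \ref{refer1}) plus a total-variation/Pinsker correction controlled by the relative entropy, then invoke Proposition \ref{dissipation0}, and finally balance the two power-law contributions to obtain $r(E)$. The only cosmetic differences are that the paper bounds the discrepancy by $\int|h_t-1|\,d\mu_{T_t}$ with Pinsker's constant $\sqrt{2}$ rather than the sharper $\sqrt{1/2}$ via TV, and the paper leaves the $\alpha$-relabelling implicit, both of which are harmless.
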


\begin{proof}
For all $t>0$, denote by $\big(X^{T_t},Y^{T_t},Z^{T_t}\big)$ a random variable with distribution $\mu_{T_t}$. For all $\delta>0$, with the definition \eqref{ht} of $h_t$ and triangle inequality,
\begin{equation*}
\mathbb{P}(U(X_t)>\min U+\delta) \leq \mathbb{P}\big(U\big(X^{T_t}\big)>\min U+\delta\big)+\int  |h_t-1| d\mu_{T_t}.
\end{equation*}
Pinsker's inequality gives
\begin{equation} \label{Pins}
\int  |h_t-1| d\mu_{T_t}\leq \bigg(2\int h_t\ln h_td\mu_{T_t}\bigg)^\frac{1}{2},
\end{equation}
which, by Proposition \ref{dissipation0}, together with Proposition \ref{refer1} gives the result.
\end{proof}

The cooling schedule $T_t = E(\ln t)^{-1}$ is optimal with respect to the method of proof for 
Proposition \ref{dissipation}; see Proposition \ref{optimal}. This is a consistent with works in simulated annealing, e.g. \cite{Chiang,Gem,GeH,Gid2,Gid,Hol2,Hol,Kush}.

The 'mountain-like' shape of $r^e$ indicates the bottleneck for the rate of convergence at low and high values of $E$: a small $E$ means convergence to the instantaneous equilibrium $\mu_{T_t}$ is slow and a large $E$ means the convergence of $\mu_{T_t}$ to the global minima of $U$ is slow. 

Although the focus in Theorem \ref{convergence} is for decaying $T_t$, it is only for convergence to the global minimum where Assumption \ref{assumption2} is used. In particular, the convergence result in Proposition \ref{dissipation0} is valid for temperature schedules that are not converging to zero. This includes the instance of using a variable temperature in order to tackle the problem of metastability in the sampling problem.

We proceed below to show exponential convergence to equilibrium for the generalised Langevin equation \eqref{gl} with constant temperature. 
Part of the analysis used in the proof of Proposition \ref{dissipation} can be used for the sampling case and $T_t=T$, i.e. working only with the partial time derivatives mentioned above
for the invariant distribution. 
{
\begin{prop}\label{constT0}
Let Assumption \ref{assumption1} and \ref{assumption4} hold and let $T_t=T$ for all $t$ for some constant $T>0$. There exist constants $C^c, C_*>0$ such that
\begin{equation*}
\int  |h_t-1| d\mu_{T}\leq C^c e^{-\frac{C_*^{-1}}{2}t},
\end{equation*}
for all $t>0$.
\end{prop}
\begin{proof}
See Appendix \ref{appendixa}.
\end{proof}
}

\section{Numerical results}\label{num}

Here we investigate the numerical performance of \eqref{gl}
in terms of convergence to a global optimum and exploration capabilities 
and compare with \eqref{Sim2}. In Section \ref{num1}, we will present the discretisation we use for both sets of dynamics and some details related to the annealing schedule and parameters. 
In Section \ref{num2} and \ref{tune}, for different parameters and cost functions, 
we present results for the probability of convergence to the global minimum and rates of 
transition between different regions of the state space. We will investigate thoroughly the effect of $E$ appearing in the annealing schedule 
as well as the parameters in the dynamics \eqref{Sim2} and \eqref{gl}.

\subsection{Time discretisation}\label{num1}
In order to simulate from \eqref{gl}, we will use the following time discretisation. For $k\in\mathbb{N}$,
\begin{subequations}\label{glsim}
\begin{align}
Y_{k+\frac{1}{2}} &= Y_k-\frac{\Delta t}{2} \gamma \nabla U(X_k) +\frac{\Delta t}{2} \lambda^\top Z_k, \\
X_{k+1} &= X_k+\Delta t \gamma Y_{k+\frac{1}{2}}, \\
Z_{k+1} &= Z_k-\theta \lambda Y_{k+\frac{1}{2}} -\theta A Z_k + \alpha \sqrt{T_k}\ \Sigma \xi_k,\\
Y_{k+1} &= Y_{k+\frac{1}{2}}-\frac{\Delta t}{2} \gamma \nabla U(X_{k+1}) + \frac{\Delta t}{2} \lambda^\top Z_{k+1},
\end{align}
\end{subequations}
where $\Delta t$ denotes the time incremements in the discretisation, $\xi_k$ are i.i.d. standard $m$-dimensional normal random variables with unit variance and
$\theta = 1-\exp(-\Delta t)$, and $\alpha = \sqrt{1-\theta^2}$. Specifically this is method 2 of \cite{BB} applied on a slight modification of \eqref{gl}, where 
$\gamma Y_t dt$ and $\gamma\nabla U dt$ is used instead in the r.h.s. of \eqref{gl1} and \eqref{gl2}. Tuning $\gamma$ can improve numerical perfomance especially
in high dimensional problems,
but we note that this has no effect in terms of the instantaneous
invariant density in \eqref{equilibrium}; similar to $\lambda$ and $A$, $\gamma$ will not appear in \eqref{equilibrium}. Unless stated otherwise, in the remainder
we will use $\gamma = 1$.  

As we will see below the choices for $A$ make a difference in terms of performance. To illustrate this we will use different choices of the form 
\begin{equation*}
A = \mu A_i; 
\end{equation*}
$i$ here is an index for different forms of $A$. The first choice will be to set $m=n$  
and set $A_1=I_n$ where $I_n$ is $n\times n$ identity matrix. For the rest, we
will use $m=2n$ and set
\begin{equation*}
A_2 =
\begin{pmatrix}
1.9I_n & 0.4I_n\\
 0.1I_n & 0.1I_n
\end{pmatrix},\quad
A_3 =
\begin{pmatrix}
 I_n & 0.5I_n\\
 0.5I_n & I_n
\end{pmatrix}, \quad
(A_4)_{ij} =
\begin{cases}
1& \textrm{if }i = j\\
\frac{1}{mn}& \textrm{otherwise}
\end{cases}.
\end{equation*}
Doubling the state space of $Z_t$ relative to $X_t$, $Y_t$ allows investigating the effect of injecting more noise in the dynamics has to the overall performance and the state space exploration.
As per \cite{MR3568348} (following \cite{MR3030715}), the constraint that the trace of $A$ is uniformly bounded has been used in selecting the above matrices. Note that $A_2$ does not satisfy the symmetry assumption for the results, but figures for $A_2$ are displayed in spite of this because there is an interesting improvement in performance for one of the cases below (see Figure \ref{fig:u1} and also others for the sake of comparison). 
Similarly we will use in each case $\lambda=\bar{\lambda}\lambda_i$ with $\bar{\lambda}>0$, $\lambda_1=I_n$ and 
\[\lambda_i=\begin{pmatrix}
I_n\\
 0
\end{pmatrix}
\]
for $i=2,3,4$. As a result $\bar{\lambda},\mu>0$ are the main tuning constants for \eqref{glsim} that do not involve the 
annealing schedule.

The Langevin system \eqref{Sim2} will be approximated with a similar leapfrog scheme,
\begin{subequations}\label{lsim}
\begin{align}
Y_{k+\frac{1}{2}} &= Y_k - \hat{\theta} \gamma \nabla U(X_k) - \hat{\theta} \mu Y_k + \hat{\alpha}\sqrt{\mu T_k} \xi_k,\label{lsim1}\\
X_{k+1} &= X_k + \Delta t \gamma Y_k, \\
Y_{k+1} &= Y_{k+\frac{1}{2}} - \hat{\theta} \gamma \nabla U(X_{k+1}) - \hat{\theta} \mu Y_{k+\frac{1}{2}} + \hat{\alpha}\sqrt{\mu T_{k+1}} \xi_{k+\frac{1}{2}},\label{lsim2}
\end{align}
\end{subequations}
for $\hat{\theta} = 1-\exp(-\frac{\Delta t}{2})$, and $\hat{\alpha} = \sqrt{1-\hat{\theta}^2}$, where in the implementation, \eqref{lsim1} and \eqref{lsim2} are combined (aside from the first iteration) and only integer-indexed $\xi$ are used. To make valid comparisons, both \eqref{glsim} and \eqref{lsim} will use $\gamma=1$ and the same noise realisation $\xi_k$ (or the first common $n$ elements) and the same step size $\Delta t$.

Finally for both cases we will use following annealing schedule:
\begin{equation*}
T_k = \bigg(\frac{1}{5}+\frac{\ln(1+k\Delta t)}{E}\bigg)^{-1},
\end{equation*}
where $E$ is an additional tuning parameter (since $\hat{E}$ is unknown in general).


\subsection{Sample path properties}\label{num2}
Our first set of simulations focus on illustrating some properties of the sample paths generated by \eqref{glsim} and \eqref{lsim}.
We will use the following bivariate potential function 
\begin{align}
U(x_1,x_2) &= \frac{x_1^2}{5}+\frac{x_2^2}{10}+5e^{-x_1^2}-7e^{-(x_1+5)^2-(x_2-3)^2}-6e^{-(x_1-5)^2-(x_2+2)^2}\nonumber\\
&\qquad+\frac{\frac{2}{3}x_1^2e^{-\frac{x_1^2}{9}}\cos(x_1+2x_2)\cos(2x_1-x_2)}{1+\frac{x_2^2}{9}}.\label{bivar_potential}
\end{align} 

The global minimum is located at $(-5,3)$, but there are plenty of local minima where the process can get trapped. In addition, there is a barrier
along the vertical line $\{x_1=0\}$ that makes crossing from each half plane less likely. Here we set $\Delta t = 0.1$, $E=5$ and
each sample is initialised at $(4,2)$. As a result, it is harder to cross $\{x_1=0\}$ to reach the global minimum and it is quite common to get stuck in other local
minima such as near $(5,-2)$. We use the number of crossings on $\{x_1=0\}$ as a scale for how stuck the process is in Table \ref{tab:crossings}. Note that the asymmetric $A=A_2$ case displays the smallest number of crossings.

To illustrate this, in Figure \ref{fig:samples} we present contour plots of $U$ together 
with a typical realisation of sample paths (in the left panels) for \eqref{lsim} and \eqref{glsim} for the different choices of $A_i$. 
As expected, \eqref{glsim} generates smoother paths than those of \eqref{lsim}. 
We also employ independent runs of each stochastic process for the same initialisation. The results
are presented in the right panels of Figure \ref{fig:samples},
where we show heat maps for two dimensional histograms representing the frequency of visiting each $(x_1,x_2)$ location over $20$ independent realisations of each process. 
The heat maps in Figure~\ref{fig:samples} do not directly depict time dependence in the paths 
and only illustrate which areas are visited more frequently.
Of course converging at the global minumum or the local one at $(5,-2)$ will result in more visits at these areas. 
The aim here is to investigate the exploration of the state space. 

\begin{figure}[h]
  \centering
  \begin{subfigure}[h]{0.3\linewidth}
    \adjincludegraphics[width=\linewidth]{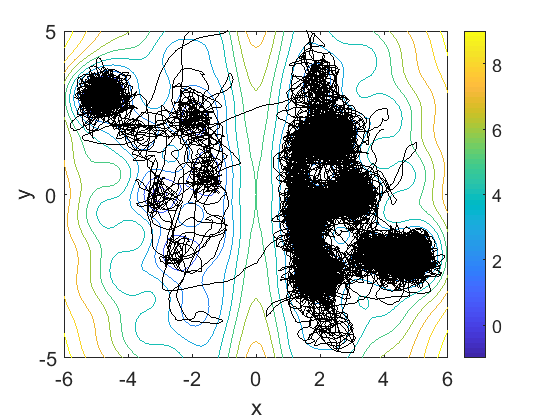}
  \end{subfigure}
    \begin{subfigure}[h]{0.3\linewidth}
    \adjincludegraphics[ width=\linewidth]{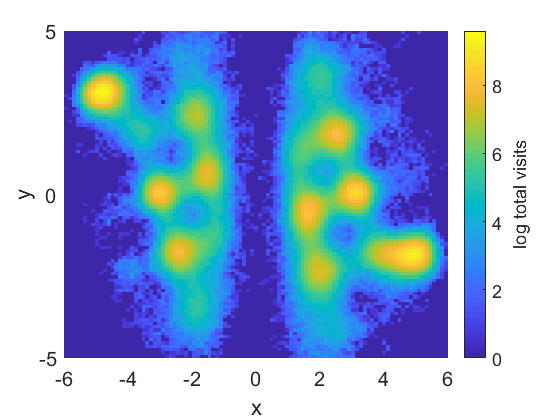}
  \end{subfigure}
  
  \begin{subfigure}[h]{0.3\linewidth}
    \adjincludegraphics[ width=\linewidth]{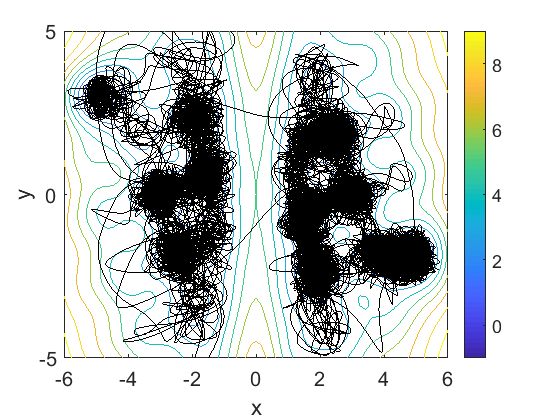}
  \end{subfigure}
   \begin{subfigure}[h]{0.3\linewidth}
    \adjincludegraphics[width=\linewidth]{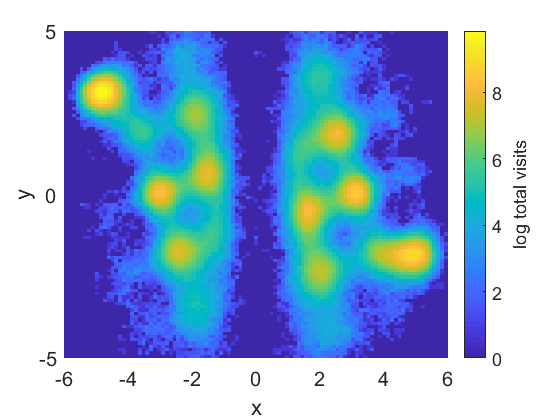}
  \end{subfigure}
  
  \begin{subfigure}[h]{0.3\linewidth}
    \adjincludegraphics[width=\linewidth]{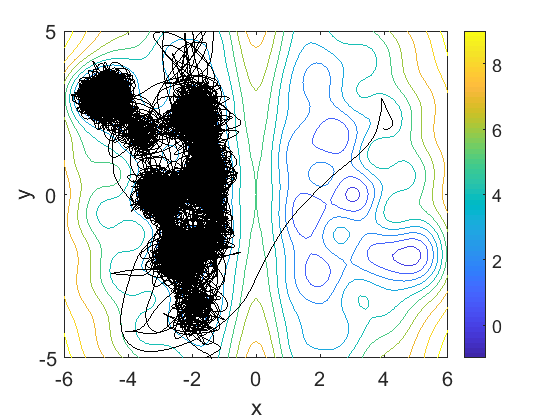}
  \end{subfigure} 
   \begin{subfigure}[h]{0.3\linewidth}
    \adjincludegraphics[width=\linewidth]{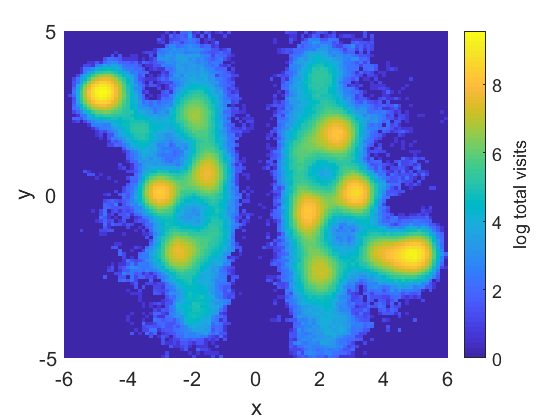}
  \end{subfigure}
  
  \begin{subfigure}[h]{0.3\linewidth}
    \adjincludegraphics[width=\linewidth]{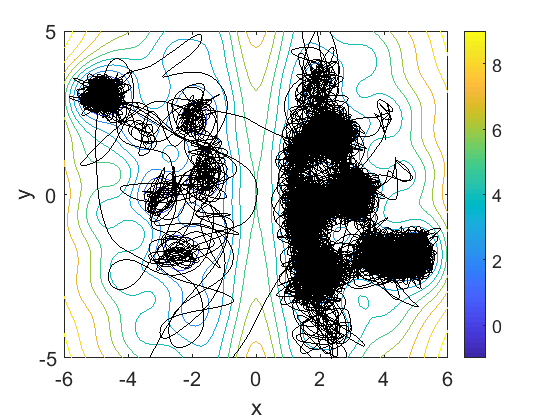}
  \end{subfigure} 
  \begin{subfigure}[h]{0.3\linewidth}
    \adjincludegraphics[width=\linewidth]{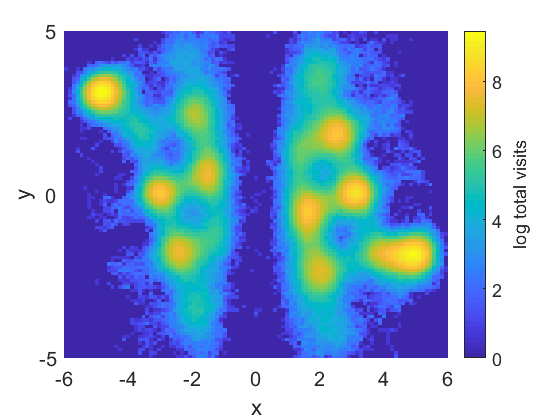}
  \end{subfigure}
  
  \begin{subfigure}[h]{0.3\linewidth}
    \adjincludegraphics[width=\linewidth]{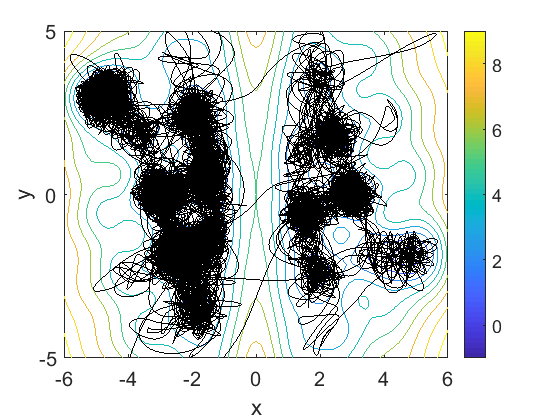}
  \end{subfigure}
  \begin{subfigure}[h]{0.3\linewidth}
    \adjincludegraphics[width=\linewidth]{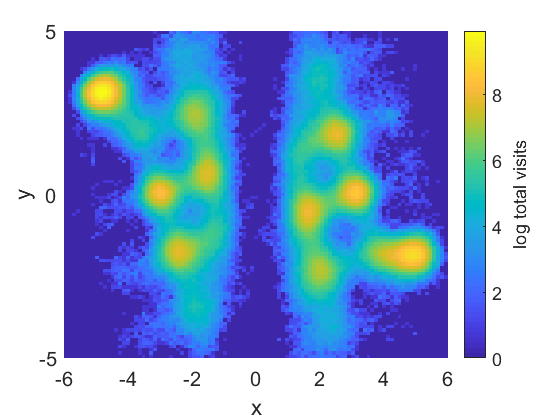}
  \end{subfigure}
  
  \caption{Dynamics in order from top: \eqref{lsim}, \eqref{glsim} with $A=A_1,\dots,A_4$. 
  Left: One instance of noise realisation. Right: Log histogram of $20$ independent runs. }
  \label{fig:samples}
\end{figure}

\begin{table}[h]
\begin{center}
\begin{tabular}{ | c|c | } 
 \hline
Method equation & Number of transitions across $x=0$\\ 
 \hline
\eqref{lsim} & 11295 \\ 
\eqref{glsim} with $A=A_1$ & 11893 \\ 
\eqref{glsim} with $A=A_2$ & 10915\\ 
\eqref{glsim} with $A=A_3$ & 11728\\ 
\eqref{glsim} with $A=A_4$ & 11771\\ 
\hline
\end{tabular}
\end{center}
\caption{Number of crossings across the vertical line $\{x_1=0\}$ for $U$ defined in \eqref{bivar_potential}. The results are summed from $k=10^5$ iterations of $10^4$ independent runs.}\label{tab:crossings}
\end{table}

\subsection{Performance and tuning}\label{tune}
As expected, the tuning parameters, $E$, $\bar{\lambda}$ and $\mu$ play significant roles in the performance of \eqref{glsim} and \eqref{lsim}. 
As $E$ is common to both, we wish to demontrate that the additional tuning variable for \eqref{glsim} will improve performance.

We first comment on relative scaling of $\bar{\lambda}$ and $\mu$ based on earlier work for quadratic $U$ and $T_t=T$ being constant.
A quadratic $U$ satisfies the bounds in Assumption \ref{assumption1} and is of particular interest because analytical calculations are possible for the spectral gap of $L_t$, which in turn gives the (exponential) rate of convergence to the equilibrium distribution. 
It is observed numerically in \cite{Otto} that in this case, \eqref{gl} has a spectral gap that is approximately a function of $\frac{\bar{\lambda}^2}{\mu}$.
On the other hand, the spectral gap of \eqref{Sim2} with quadratic $U$ is a function of $\mu$ thanks to Theorem 3.1 in \cite{spec}.
For the rest of the comparison, we will use $\frac{\bar{\lambda}^2}{\mu}$ and $\mu$ as variables for \eqref{glsim} and \eqref{lsim} respectively as 
these quantities appear to have a distinct effect on the mixing in each case.


We will consider three different cost functions $U$ and set $\Delta t=0.02$. As before we will initialise
at a point well separated from the global minimum and consider each method to be successful if it convergences
at a particular tolerance region near the global minumum. The
details are presented in Table \ref{tab:tolerance}. We choose the popular Alpine function in $12$ dimensions ($\nabla U_1$ here is a subgradient) 
and two variants of \eqref{bivar_potential}. 
$U_2$ is modified to have the same quadratic confinement in $x_1$ and $x_2$ direction and there are several additional local minima due to the last term in the sum.
More importantly, compared to \eqref{bivar_potential} (and $U_3$) it has a narrow region near the origin that allows easier passage through $\{x_1=0\}$.
On the other hand $U_3$ similar to \eqref{bivar_potential} except that the well near the global minimum (and the dominant local minimum
at $(5,-2)$) are elongated in the direction of $x_2$ (and $x_1$ respectively). 
 
\begin{table}[h]
\begin{center}
\begin{tabular}{ |c|c|c| } 
\hline
Cost function & Initial condition & Tolerance sets\\ 
 \hline
$U_1(x)=\frac{1}{2}\sum_{i=1}^{12}|x_i \sin(x_i)+0.1x_i|$ & $x_j=6$  $\forall j$ & $x_j\in[-2,2]$  $\forall j$\\ 
\hline
$\begin{array}{c}
U_2(x_1,x_2) = \frac{x_1^2}{7}+\frac{x_2^2}{7}+5\Big(1-e^{-9x_2^2}\Big)e^{-x_1^2} - 7e^{-(x_1+5)^2-(x_2-3)^2}\\
-6e^{-(x_1-5)^2-(x_2+2)^2}+\frac{\frac{2}{3}x_1^2e^{-\frac{x_1^2}{9}}\cos(x_1+2x_2)\cos(2x_1-x_2)}{1+\frac{x_2^2}{9}}
\end{array}$ & $\begin{array}{c}x_1=4,\\ x_2=2\end{array}$ & $\begin{array}{c} x_1\in[-6.5,-4.5],\\ x_2\in [1.5,4.5]\end{array}$\\ 
\hline
$\begin{array}{c}
 U_3(x_1,x_2) = \frac{x_1^2}{5}+\frac{x_2^2}{10}+5e^{-x_1^2} -7e^{-2(x_1+5)^2-\frac{(x_2-3)^2}{5}} -6e^{-\frac{(x_1-5)^2}{5}-2(x_2+2)^2}
\end{array}
$ & $\begin{array}{c}x_1=4,\\ x_2=2\end{array}$ & $\begin{array}{c} x_1\in[-6.5,-4.5],\\ x_2\in [1.5,4.5]\end{array}$\\ 
\hline
\end{tabular}
\caption{Details of three different cost functions, initialisation and tolerance regions corresponding to regions of attraction of the global minimum.}\label{tab:tolerance}
\end{center}
\end{table}

In Figure \ref{fig:u1} we present proportions of simulations converging at the region near the global 
minimum for $U=U_1$ depending on $E$ and $\mu$ for \eqref{lsim} and on $E$ and $\frac{\bar{\lambda}^2}{\mu}$ for \eqref{glsim} 
based on discussion above. To produce the figures related to \eqref{glsim} after setting $E,\frac{\bar{\lambda}^2}{\mu}$ we pick a random value of $\mu$ from a grid.
The aim of this procedure is to ease visualisation, reduce computational cost and to emphasise that it is  $\frac{\bar{\lambda}^2}{\mu}$ that is crucial for mixing and 
the performance here is not a product of a tedious tuning for $\mu$.
The left panels of Figure \ref{fig:u1} are based on final state and the right on an average of the positions (of $X$) over the last $5000$ iterations.
In this example it is clear empirically that the generalised Langevin dynamics \eqref{glsim} result in a higher probability of reaching the global minumum. Another interesting observation is 
that for the generalised Langevin dynamics good performance is more robust to the chosen value of $E$. In this example, 
this means that adding
an additional tuning variable and scaling $\mu$ proportional to $\bar{\lambda}^2$, 
makes it easier to find a configuration of the parameters $E,\mu,\bar{\lambda}$ that leads to good perfomance,
compared to using \eqref{lsim} and tuning $E,\mu$. 
\textcolor{black}{It's also worth noting the cases of small $E$ where the generalised Langevin dynamics performs significantly better than the Langevin dynamics in the top plot and even than the case of the same dynamics and larger $E$. This is an improvement that is not completely encapsulated by the analytic results here; it indicates that the deterministic dynamics ($E=0$) can be inherently much more successful at climbing out of local minima, which translates to better convergence rates in the $E > 0$ cases.}


In Figures \ref{fig:u2} and \ref{fig:u3} we present results for $U_2$ and $U_3$. A notable difference to Figure \ref{fig:u1} here is that the panels on the left show 
proportions of the position average of the last 5000 iterations being near the
correct global minimum and the panels on the right present the number of jumps across $\{x_1=0\}$ demonstrated by a position average at each iteration of the previous 5000 iterations. More precisely, the panels on the right show the number of jumps shown by the trajectory 
\begin{equation*}
\tilde{X}_k = \frac{1}{5000}\sum_{k' = 1}^{5000} X_{k-k'+1} 
\end{equation*}
for all $k>5000$. All results are averaged over $20$ independent runs. 
The aim here is to measure the extent of exploration 
of each process similar to Table \ref{tab:crossings}. 
We observe that in both cases using \eqref{glsim} leads to a similar number of jumps. 
We believe the benefit of the higher order dynamics here are the robustness of performance for different values of $E$
and $\frac{\bar{\lambda}^2}{\mu}$. This is especially for using $A_3$ and $A_4$. 
Finally we note that despite similarities between $U_2$
and $U_3$ there are significant features that are different: the sharpness in the confinement, 
the shape and number of attracting wells and the shape of barriers that obstruct crossing regions in the state space.
This will have a direct effect in performance, which can explain the difference in 
performance when comparing Figures \ref{fig:u2} and \ref{fig:u3}; $U_3$ is a harder cost function to minimise. 

\textcolor{black}{The selection $A=A_2$, shown as the middle row in each of Figures \ref{fig:u1}, \ref{fig:u2} and \ref{fig:u3}, does not satisfy the probably superfluous symmetry assumption as stated in the introduction, but it is noteworthy that the performance varies to such a large extent for different $U$ and that any optimality of $A$, left as future work, could change depending on whether the symmetry assumption is in place.  }

\begin{figure}[!h]
  \centering
  \begin{subfigure}[h]{0.3\linewidth}
    \adjincludegraphics[trim = {0 0 {0.04\width} {0.04\width}}, clip, width=\linewidth]{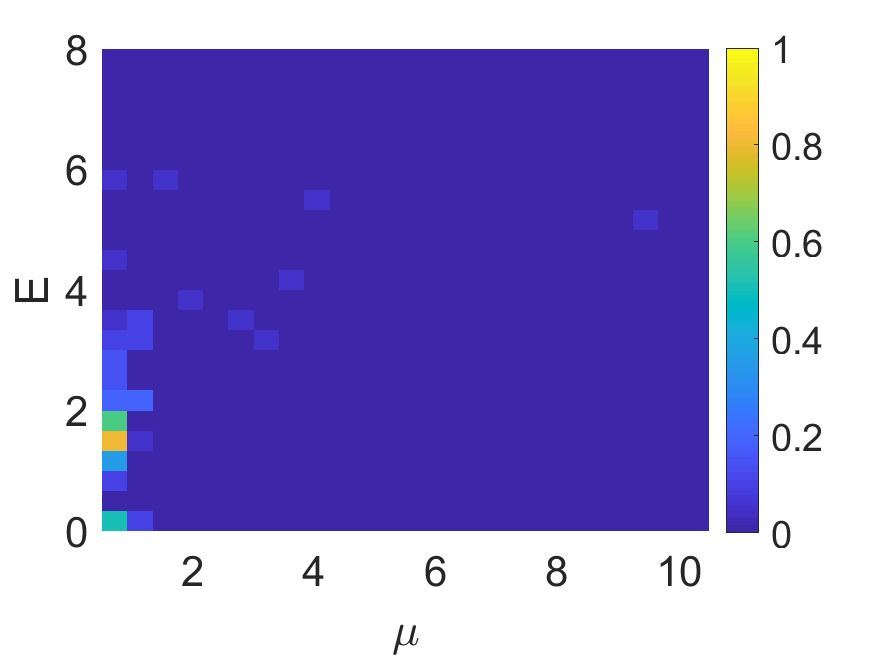}
  \end{subfigure}
  \begin{subfigure}[h]{0.3\linewidth}
    \adjincludegraphics[trim = {0 0 {0.04\width} {0.04\width}}, clip, width=\linewidth]{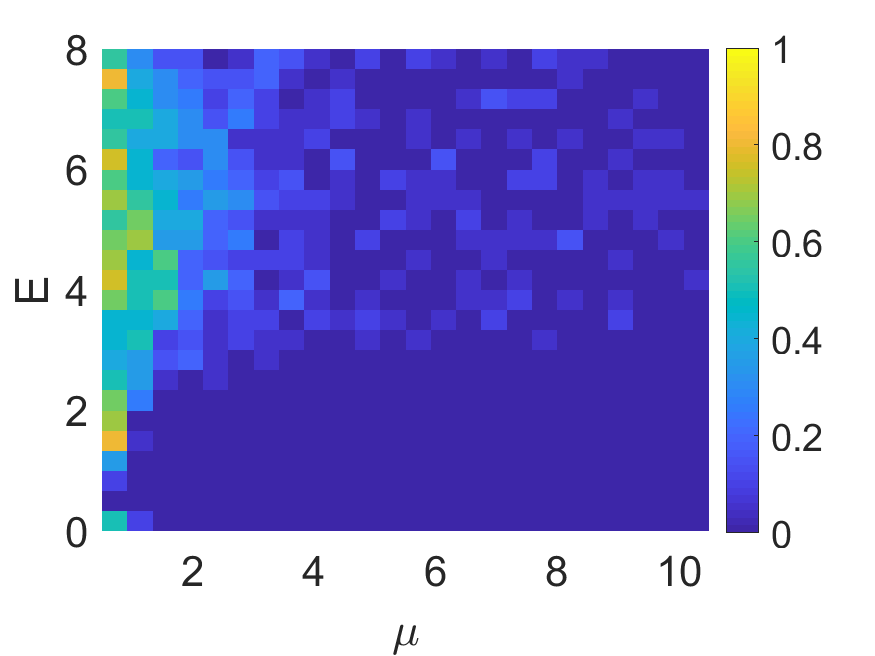}
  \end{subfigure}\\
  \begin{subfigure}[h]{0.3\linewidth}
    \adjincludegraphics[trim = {0 0 {0.04\width} {0.04\width}}, clip, width=\linewidth]{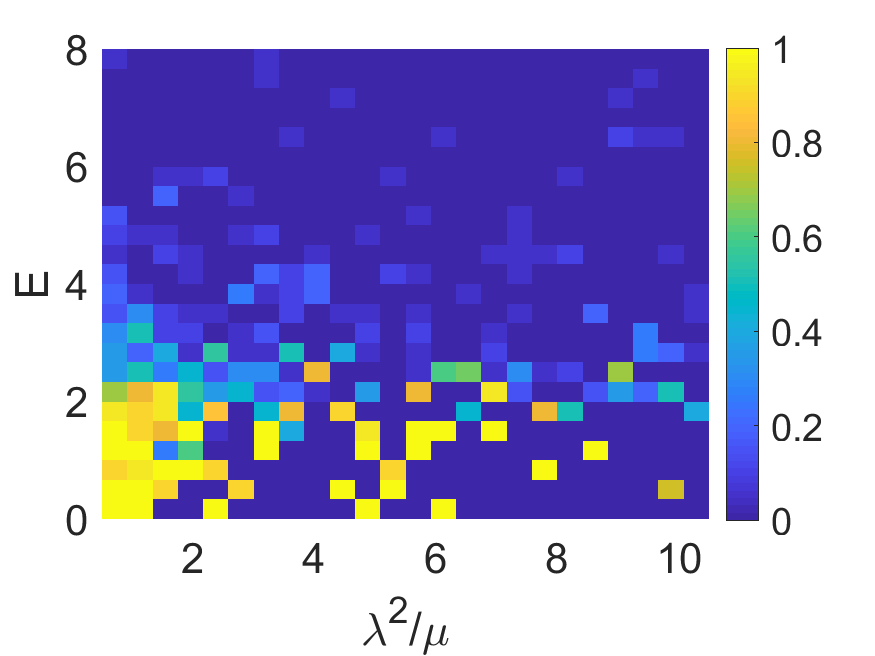}
  \end{subfigure}
  \begin{subfigure}[h]{0.3\linewidth}
    \adjincludegraphics[trim = {0 0 {0.04\width} {0.04\width}}, clip, width=\linewidth]{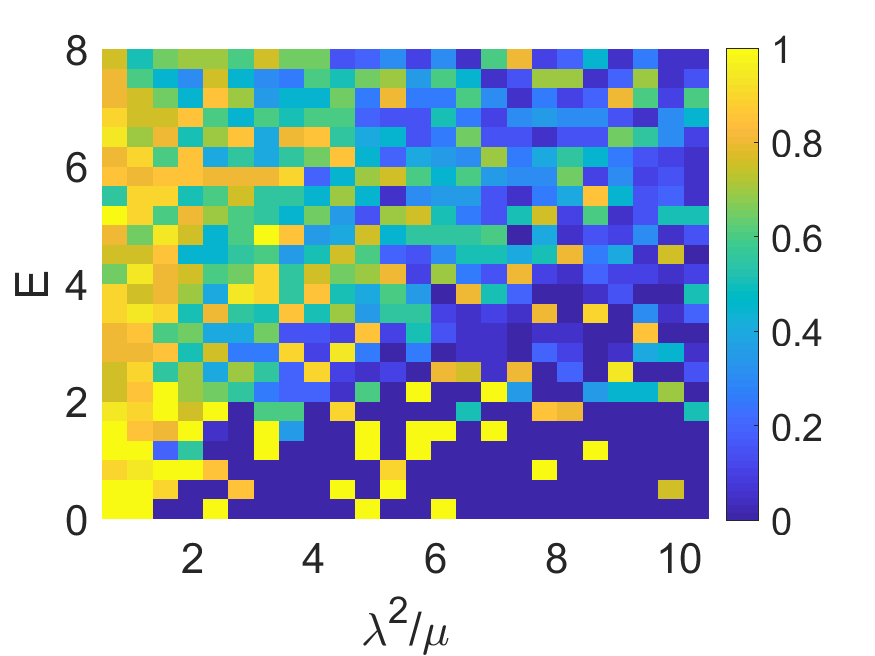}
  \end{subfigure}\\
  \begin{subfigure}[h]{0.3\linewidth}
    \adjincludegraphics[trim = {0 0 {0.04\width} {0.04\width}}, clip, width=\linewidth]{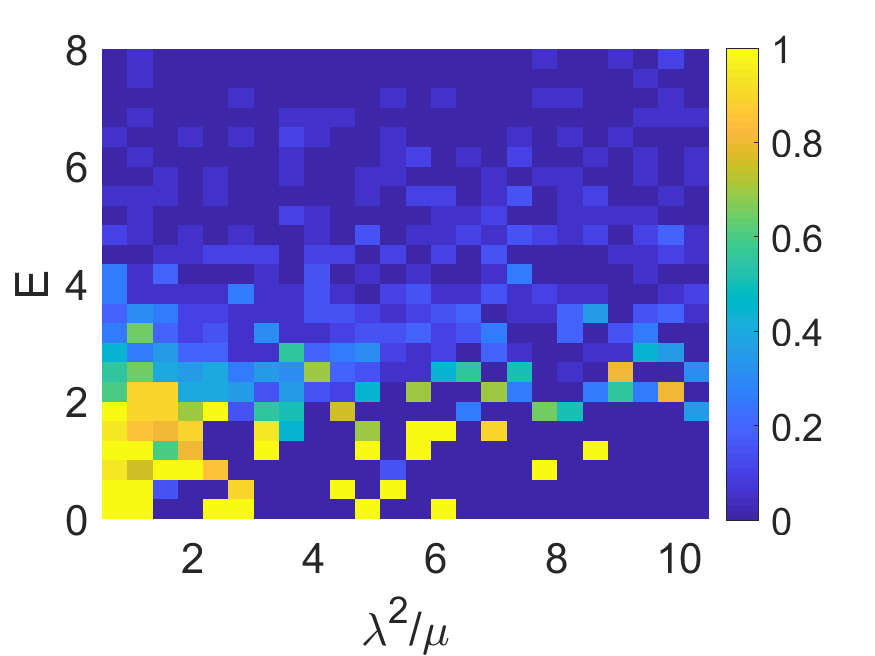}
  \end{subfigure}
  \begin{subfigure}[h]{0.3\linewidth}
    \adjincludegraphics[trim = {0 0 {0.04\width} {0.04\width}}, clip, width=\linewidth]{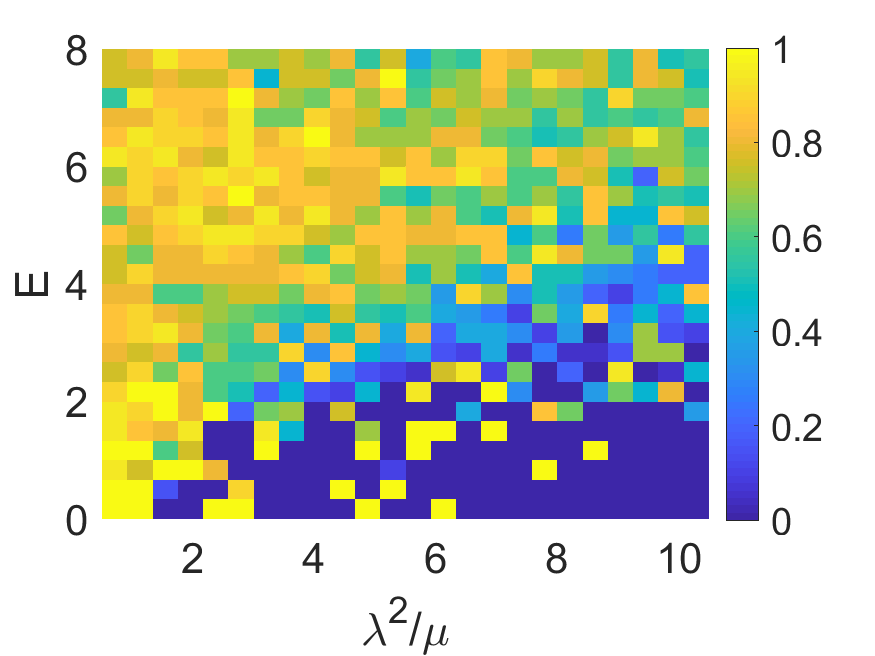}
  \end{subfigure}\\
  \begin{subfigure}[h]{0.3\linewidth}
    \adjincludegraphics[trim = {0 0 {0.04\width} {0.04\width}}, clip, width=\linewidth]{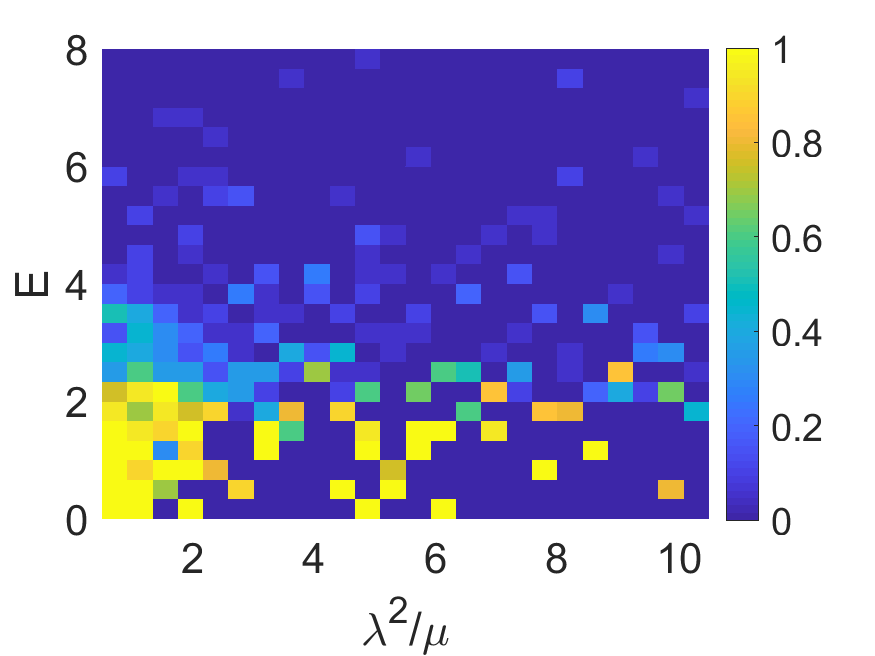}
  \end{subfigure}
  \begin{subfigure}[h]{0.3\linewidth}
    \adjincludegraphics[trim = {0 0 {0.04\width} {0.04\width}}, clip, width=\linewidth]{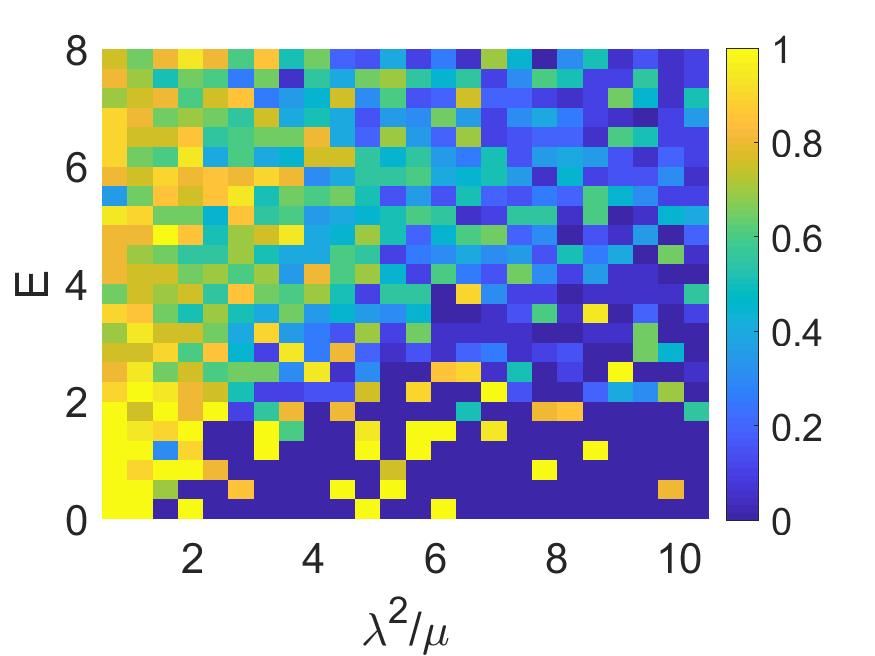}
  \end{subfigure}\\
  \begin{subfigure}[h]{0.3\linewidth}
    \adjincludegraphics[trim = {{0.04\width} 0 {0.04\width} {0.04\width}}, clip, width=\linewidth]{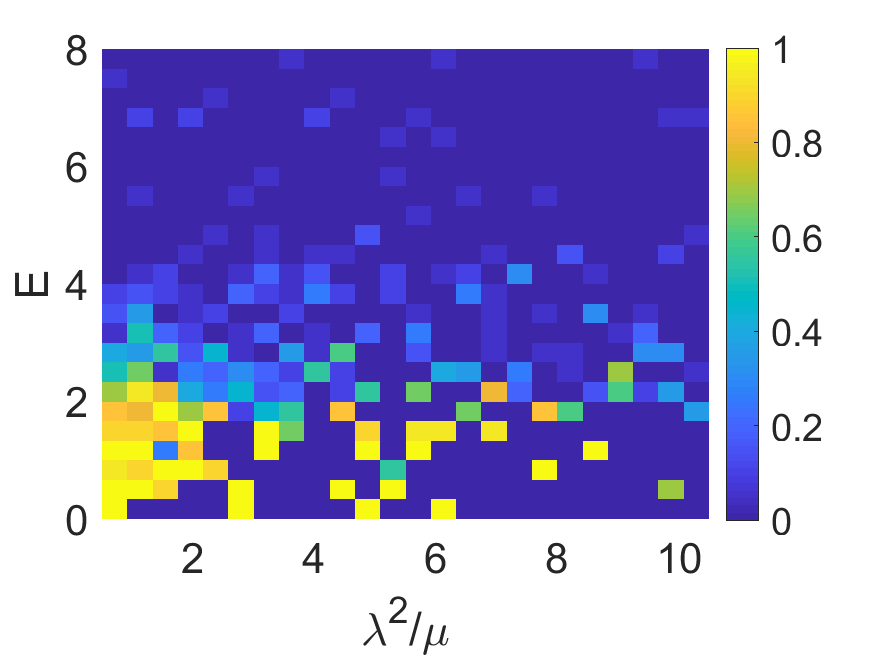}
  \end{subfigure}
  \begin{subfigure}[h]{0.3\linewidth}
    \adjincludegraphics[trim = {{0.04\width} 0 {0.04\width} {0.04\width}}, clip, width=\linewidth]{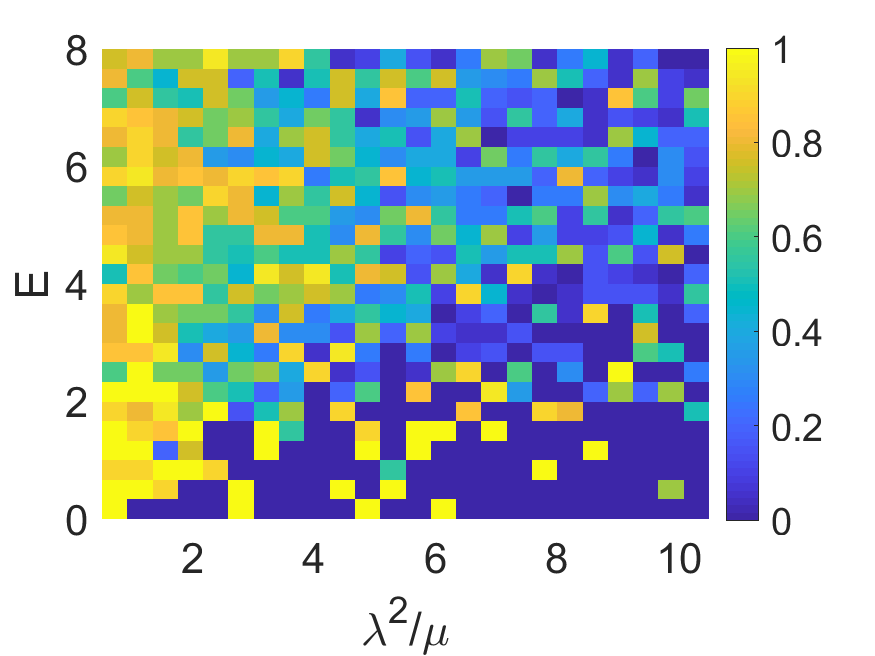}
  \end{subfigure}
  \caption{Proportion of simulations satisfying optimality tolerance for $U=U_1$. Panels from top to bottom: \eqref{lsim}, \eqref{glsim} with $A=A_1,A_2,A_3$. 
  Left: Final position. Right: time-average of last 5000 iterations.  We use $\gamma = 3$ for improving visualisation, the results and improvement in using \eqref{glsim} are similar
  for the case of $\gamma=1$. Results here are for
  20 independent runs and  $k\leq 5\cdot10^4$ -iterations.}
  \label{fig:u1}
\end{figure}

\begin{figure}[!h]
  \centering
  \begin{subfigure}[h]{0.3\linewidth}
    \adjincludegraphics[trim = {0 0 {0.04\width} {0.04\width}}, clip, width=\linewidth]{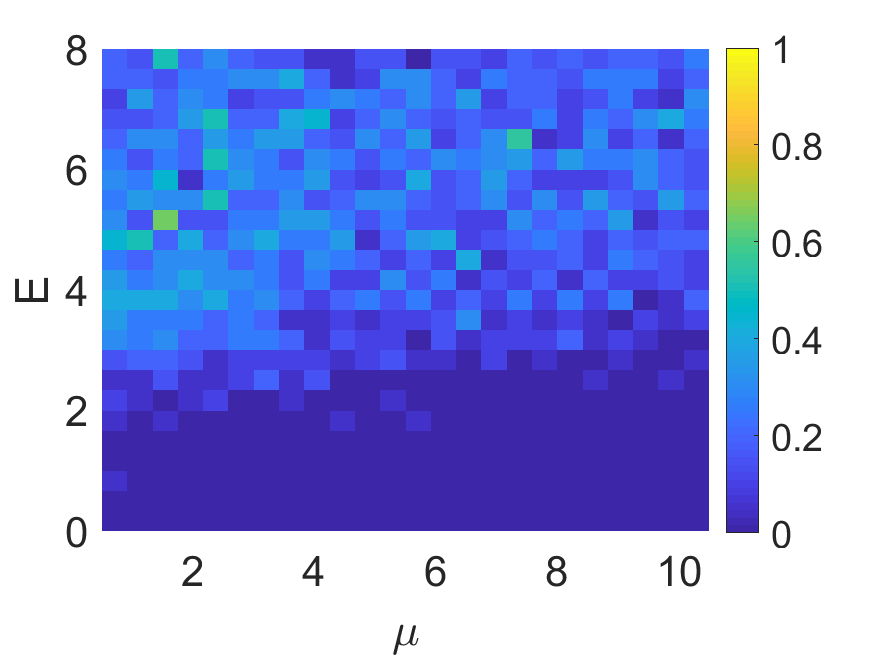}
  \end{subfigure}
  \begin{subfigure}[h]{0.3\linewidth}
    \adjincludegraphics[trim = {0 0 {0.04\width} {0.04\width}}, clip, width=\linewidth]{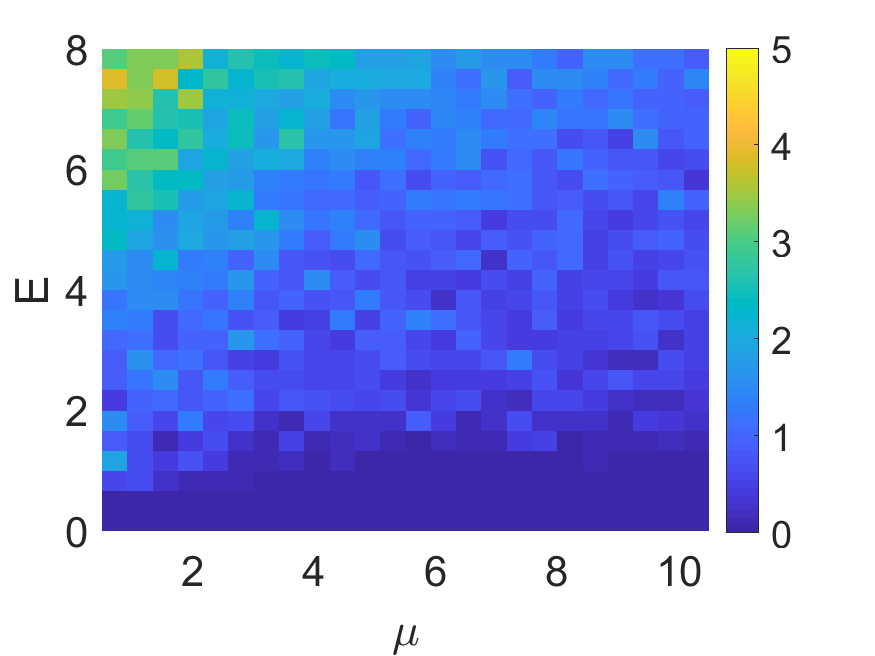}
  \end{subfigure}\\
  \begin{subfigure}[h]{0.3\linewidth}
    \adjincludegraphics[trim = {0 0 {0.04\width} {0.04\width}}, clip, width=\linewidth]{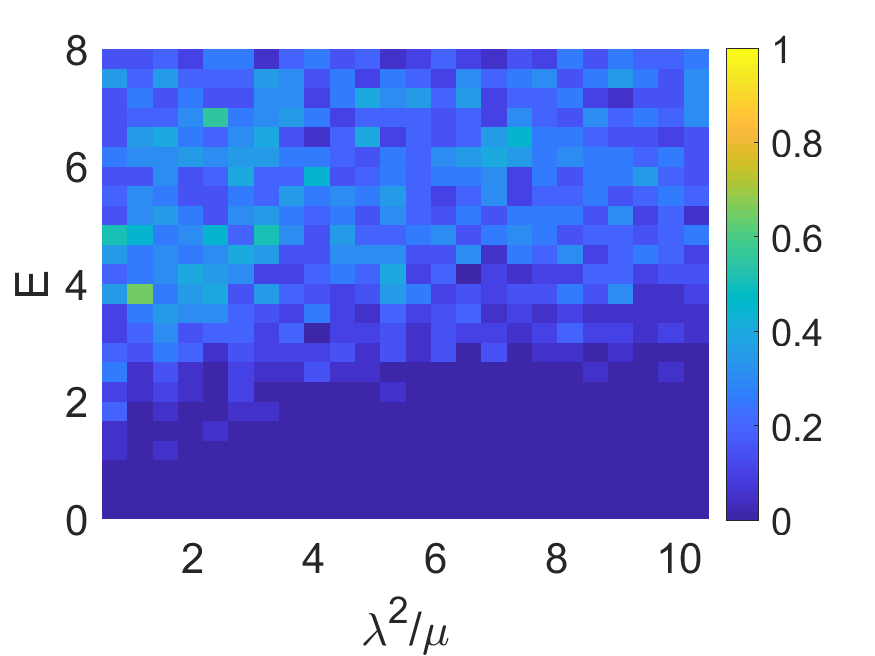}
  \end{subfigure}
  \begin{subfigure}[h]{0.3\linewidth}
    \adjincludegraphics[trim = {0 0 {0.04\width} {0.04\width}}, clip, width=\linewidth]{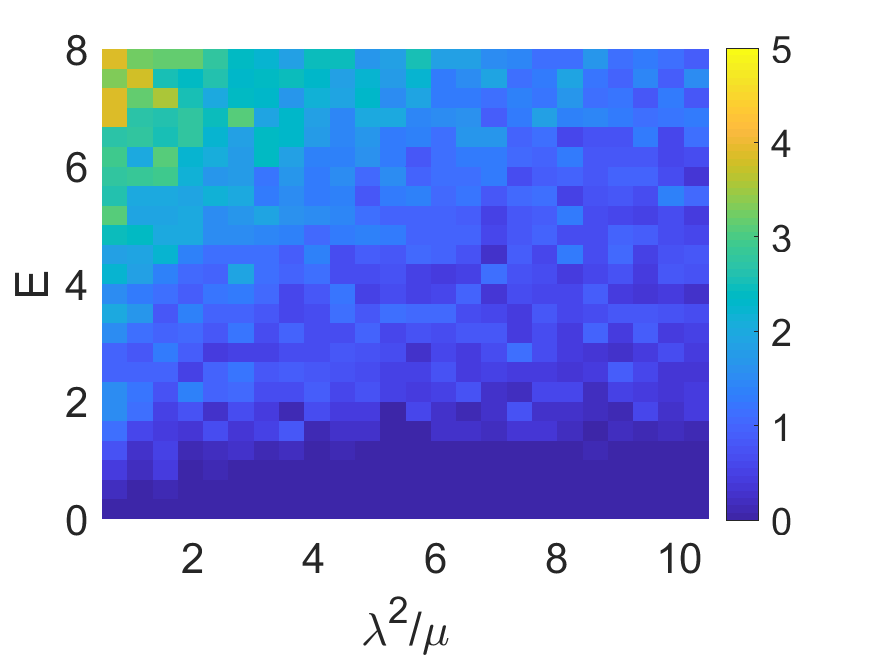}
  \end{subfigure}\\
  \begin{subfigure}[h]{0.3\linewidth}
    \adjincludegraphics[trim = {0 0 {0.04\width} {0.04\width}}, clip, width=\linewidth]{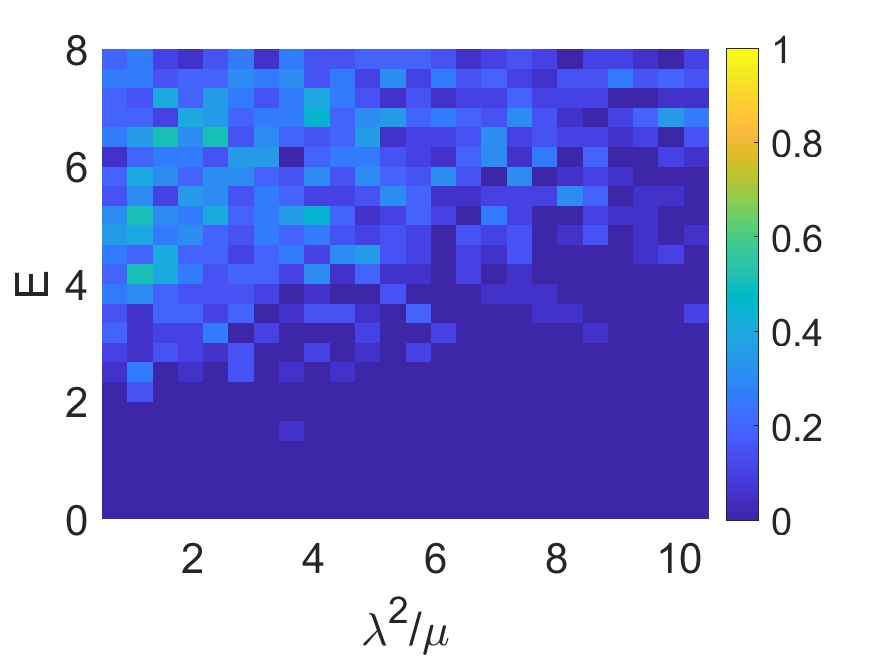}
  \end{subfigure}
  \begin{subfigure}[h]{0.3\linewidth}
    \adjincludegraphics[trim = {0 0 {0.04\width} {0.04\width}}, clip, width=\linewidth]{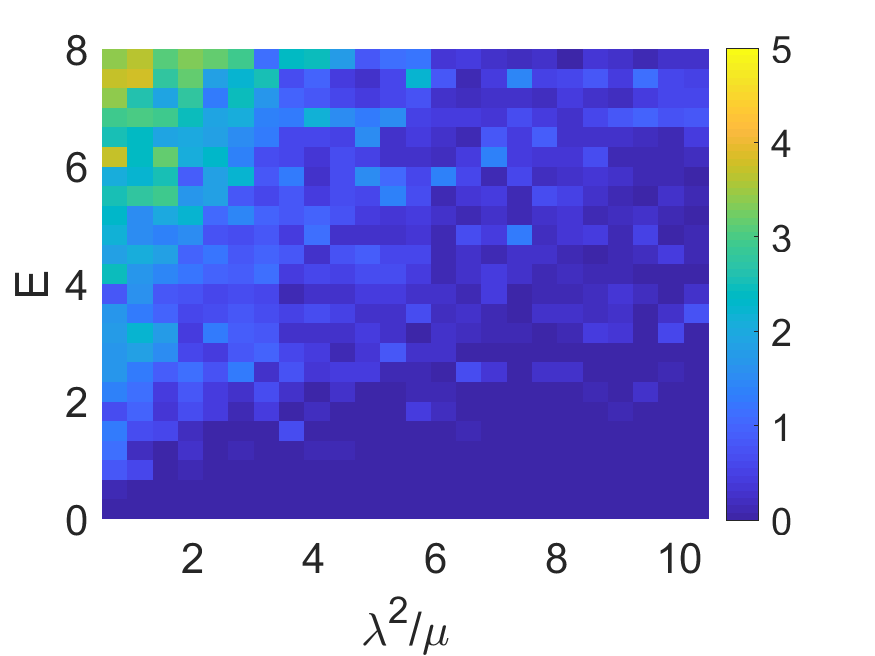}
  \end{subfigure}\\
  \begin{subfigure}[h]{0.3\linewidth}
    \adjincludegraphics[trim = {0 0 {0.04\width} {0.04\width}}, clip, width=\linewidth]{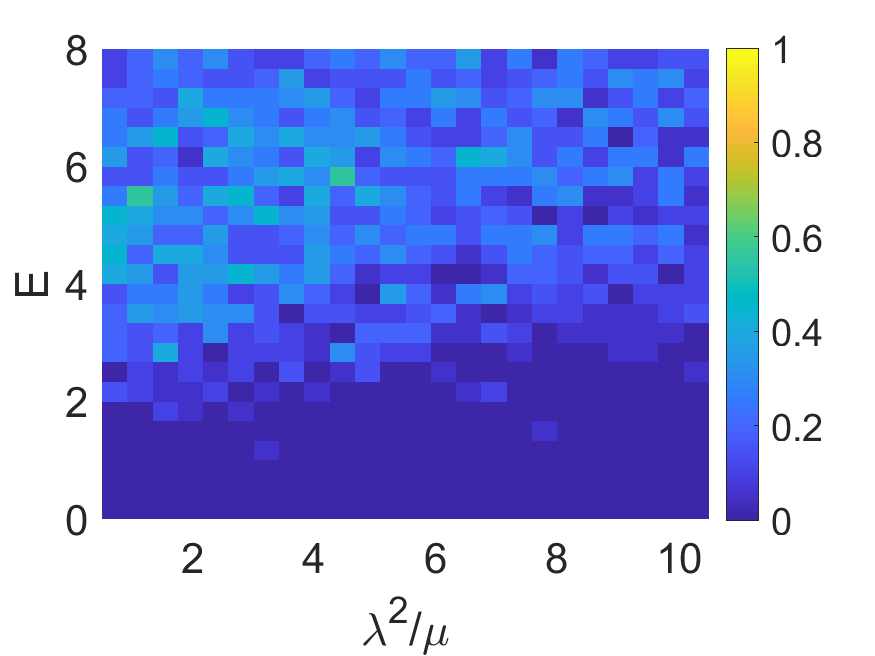}
  \end{subfigure}
  \begin{subfigure}[h]{0.3\linewidth}
    \adjincludegraphics[trim = {0 0 {0.04\width} {0.04\width}}, clip, width=\linewidth]{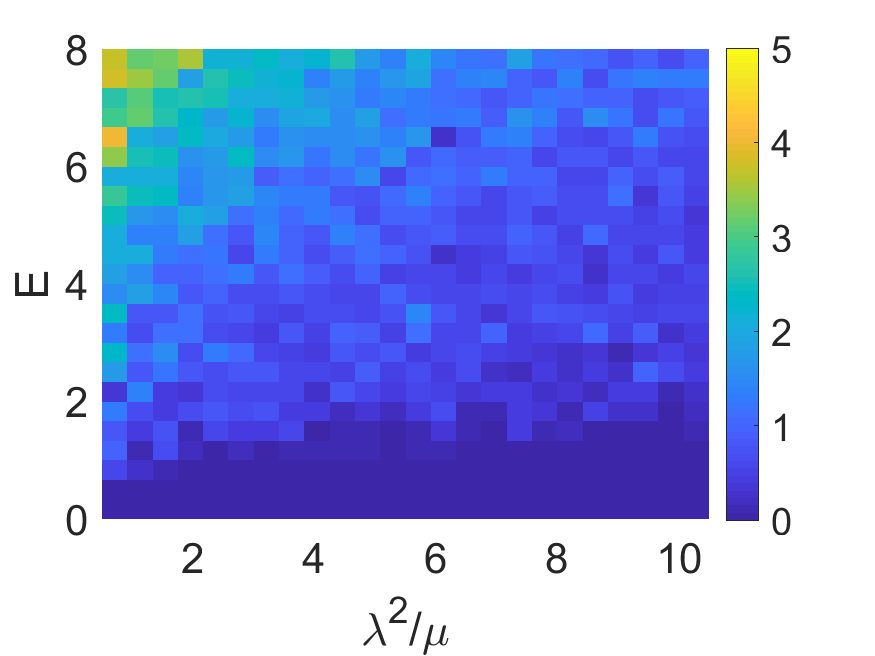}
  \end{subfigure}\\
  \begin{subfigure}[h]{0.3\linewidth}
    \adjincludegraphics[trim = {0 0 {0.04\width} {0.04\width}}, clip, width=\linewidth]{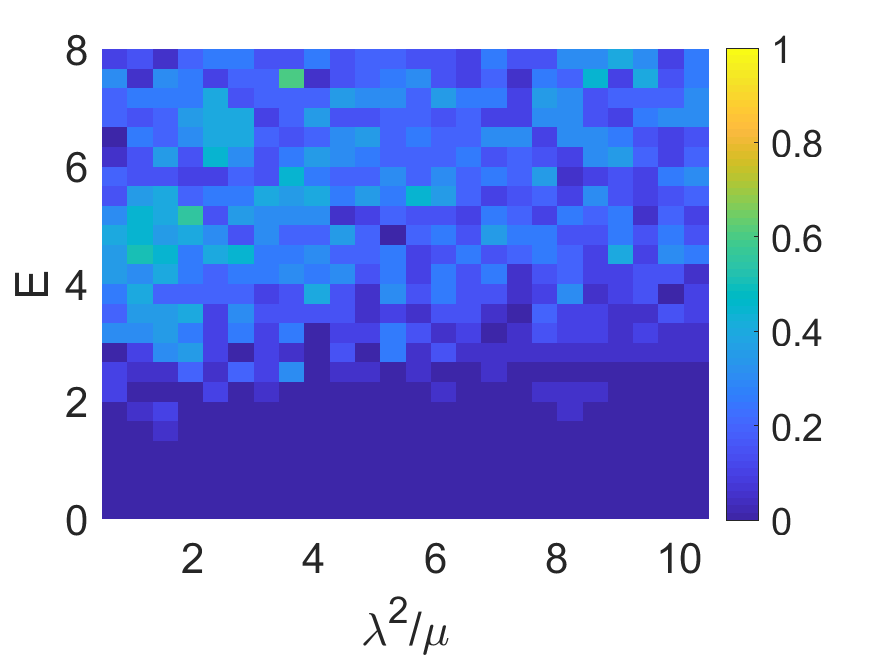}
  \end{subfigure}
  \begin{subfigure}[h]{0.3\linewidth}
    \adjincludegraphics[trim = {0 0 {0.04\width} {0.04\width}}, clip, width=\linewidth]{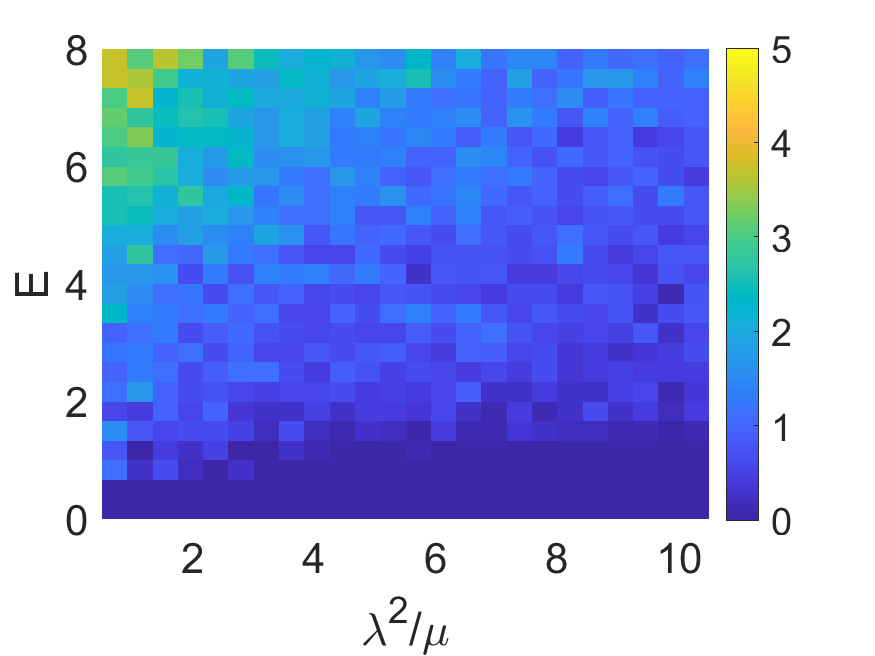}
  \end{subfigure}
  \caption{Both proportion of success and numerical transition rates for $U=U_2$. Panels from top to bottom: \eqref{lsim}, \eqref{glsim} with $A=A_1,A_2,A_3,A_4$.
  Left: Proportion satisfying the optimality tolerance for time-average of last 5000 iterations. 
  Right: Average number of crossings of position averages over 5000 iterations across $\{x_1=0\}$ for each independent run. 
  The remaining details are as in caption of Figure \ref{fig:u1}.}
  \label{fig:u2}
\end{figure}

\begin{figure}[!h]
  \centering
  \begin{subfigure}[h]{0.3\linewidth}
    \adjincludegraphics[trim = {0 0 {0.04\width} {0.04\width}}, clip, width=\linewidth]{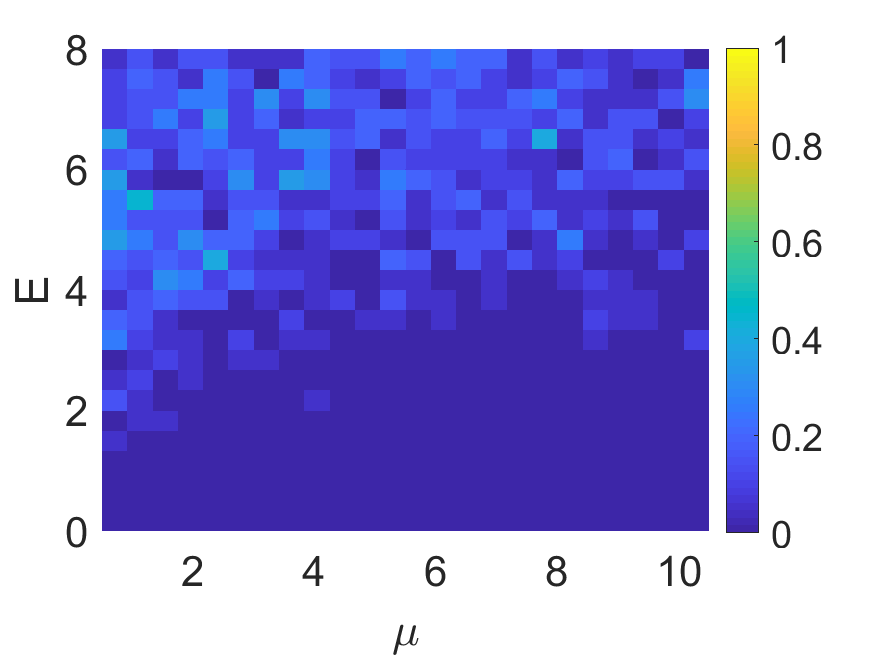}
  \end{subfigure}
  \begin{subfigure}[h]{0.3\linewidth}
    \adjincludegraphics[trim = {0 0 {0.04\width} {0.04\width}}, clip, width=\linewidth]{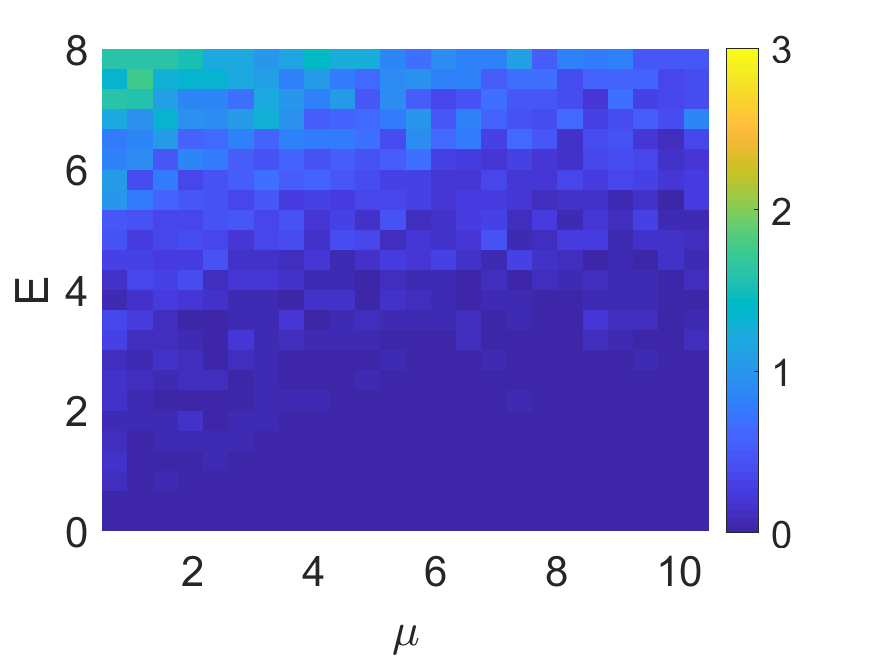}
  \end{subfigure}\\
  \begin{subfigure}[h]{0.3\linewidth}
    \adjincludegraphics[trim = {0 0 {0.04\width} {0.04\width}}, clip, width=\linewidth]{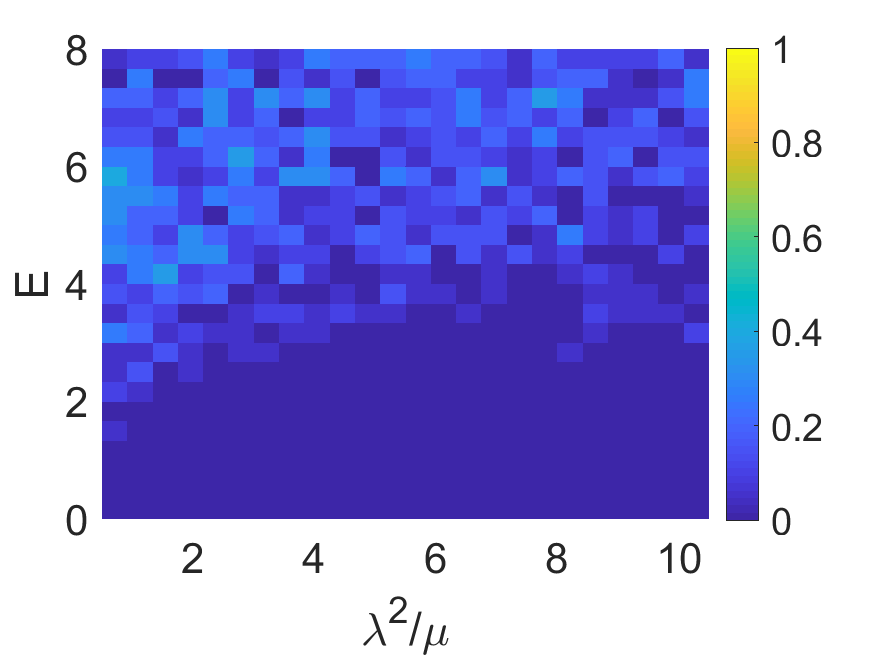}
  \end{subfigure}
  \begin{subfigure}[h]{0.3\linewidth}
    \adjincludegraphics[trim = {0 0 {0.04\width} {0.04\width}}, clip, width=\linewidth]{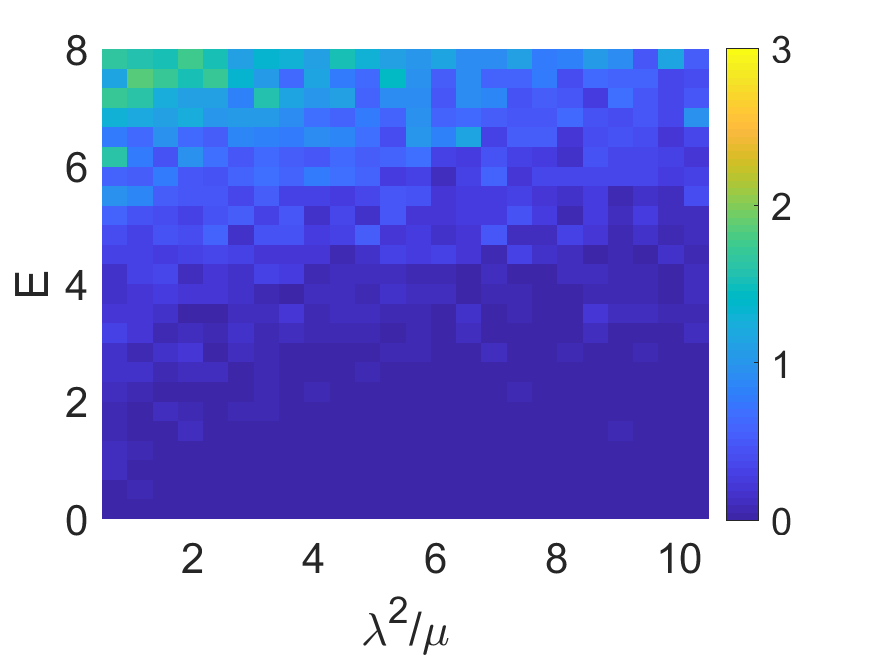}
  \end{subfigure}\\
  \begin{subfigure}[h]{0.3\linewidth}
    \adjincludegraphics[trim = {0 0 {0.04\width} {0.04\width}}, clip, width=\linewidth]{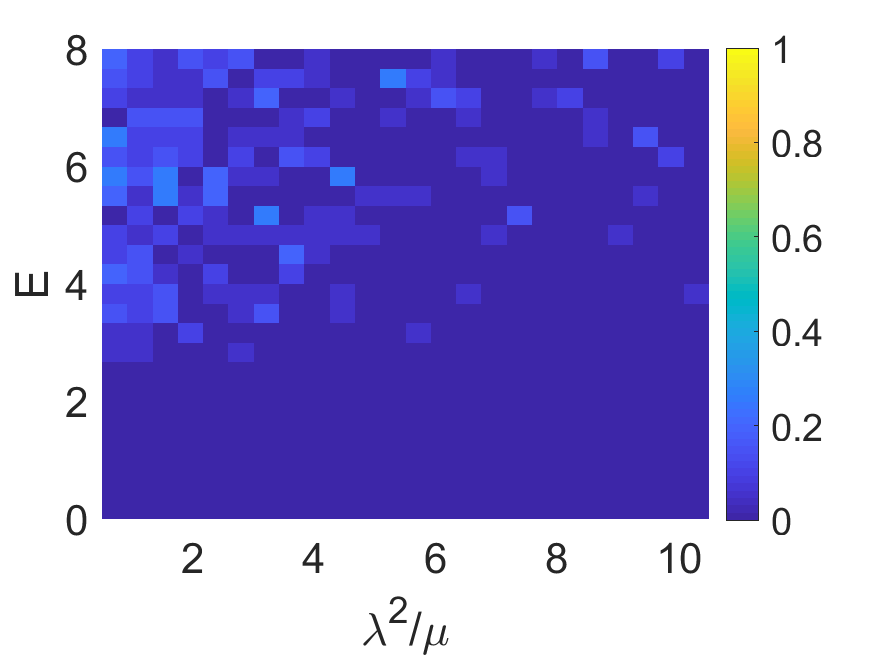}
  \end{subfigure}
  \begin{subfigure}[h]{0.3\linewidth}
    \adjincludegraphics[trim = {0 0 {0.04\width} {0.04\width}}, clip, width=\linewidth]{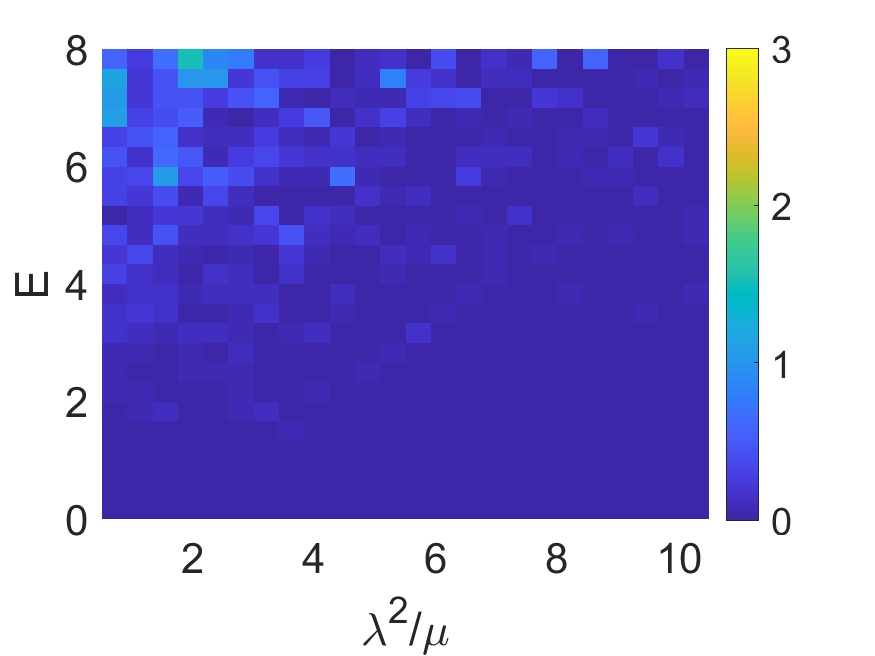}
  \end{subfigure}\\
  \begin{subfigure}[h]{0.3\linewidth}
    \adjincludegraphics[trim = {0 0 {0.04\width} {0.04\width}}, clip, width=\linewidth]{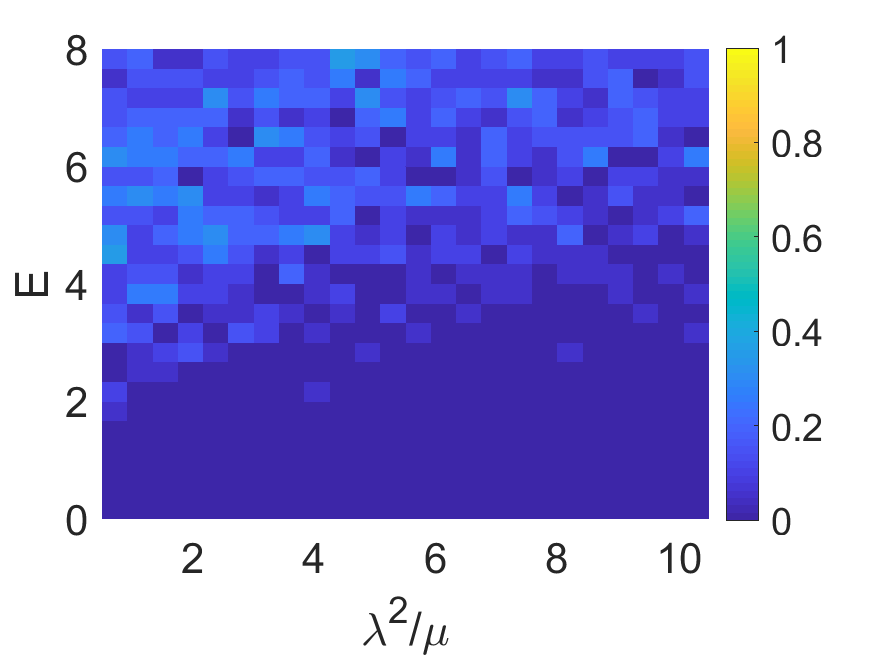}
  \end{subfigure}
  \begin{subfigure}[h]{0.3\linewidth}
    \adjincludegraphics[trim = {0 0 {0.04\width} {0.04\width}}, clip, width=\linewidth]{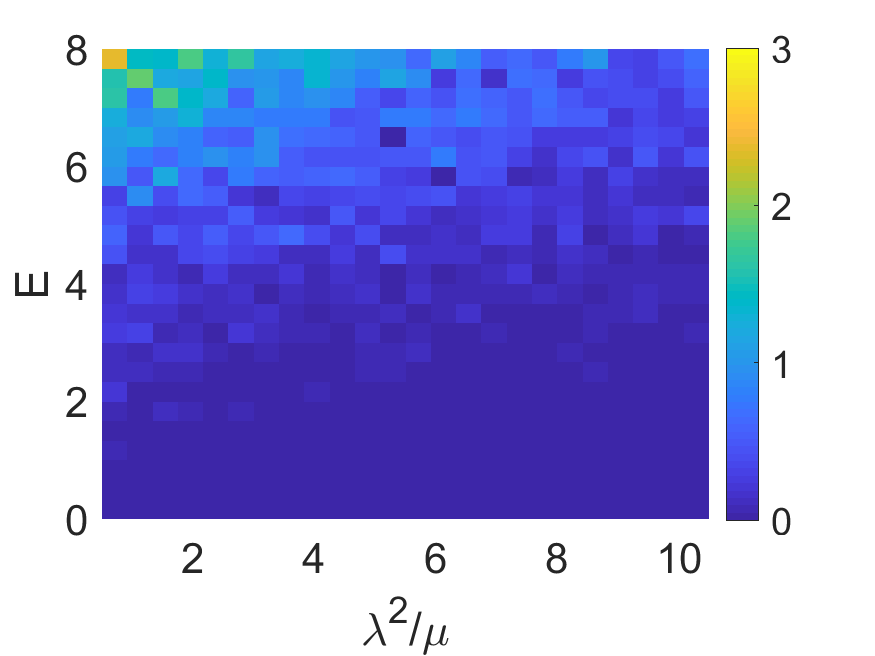}
  \end{subfigure}\\
  \begin{subfigure}[h]{0.3\linewidth}
    \adjincludegraphics[trim = {0 0 {0.04\width} {0.04\width}}, clip, width=\linewidth]{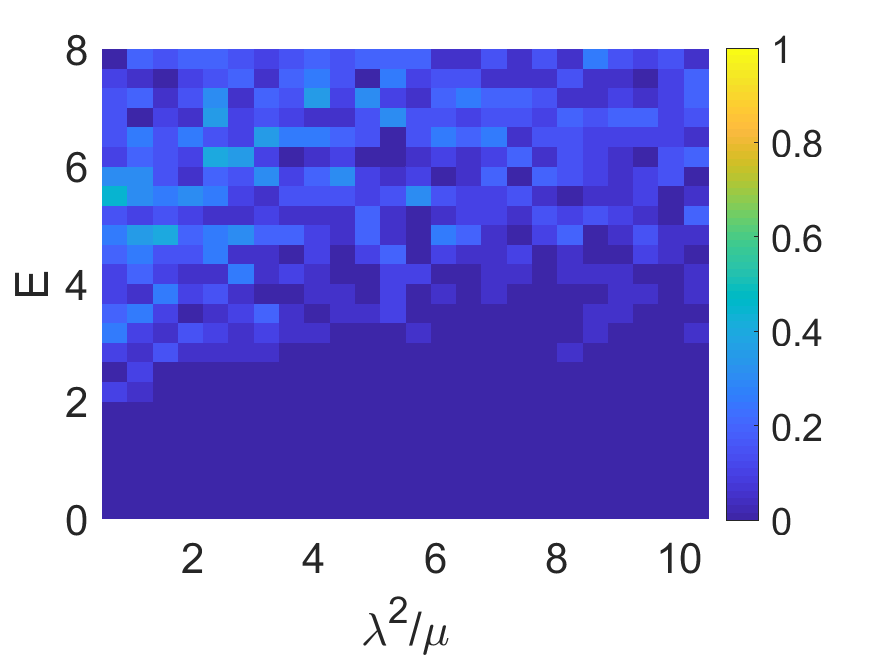}
  \end{subfigure}
  \begin{subfigure}[h]{0.3\linewidth}
    \adjincludegraphics[trim = {0 0 {0.04\width} {0.04\width}}, clip, width=\linewidth]{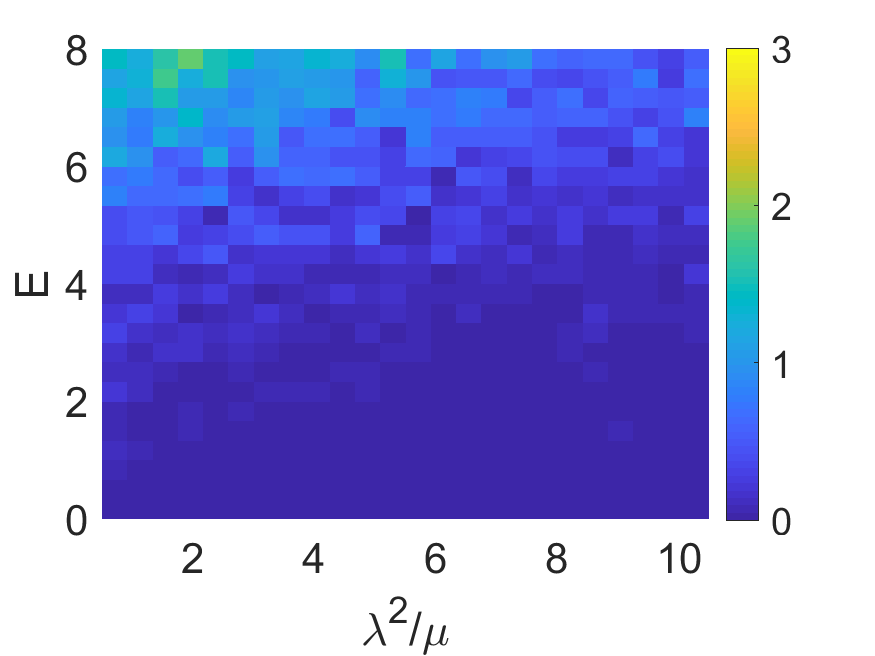}
  \end{subfigure}
  \caption{Results for $U=U_3$. Details are as in caption of Figure \ref{fig:u2}.}
  \label{fig:u3}
\end{figure}

\section{Proofs} \label{proofs}

\subsection{Notation and preliminaries}
Unless stated otherwise, $\partial_t$ is used to denote the partial derivative with respect to $t$ with $T_t$ fixed (whenever its operand depends on $T_t$), whereas $\frac{d}{dt}$ denotes the full derivative in $t$. In addition, $\nabla$ denotes the gradient in $\mathbb{R}^{2n+m}$ space and $d\zeta$ will be used for the Lebesgue measure on $\mathbb{R}^{2n+m}$. The notation $\mathds{1}_S$ will be used for the indicator function on the set $S$.\\
Recall the standard mollifier $\varphi:\mathbb{R}\rightarrow\mathbb{R}$ to be:
\begin{equation*}
\varphi(x):=
\begin{cases}
e^{\frac{1}{x^2-1}}\bigg(\int_{-1}^1e^{\frac{1}{y^2-1}}dy\bigg)^{-1} &\qquad\textrm{if }-1<x\leq1\\
0 &\qquad\textrm{otherwise},
\end{cases}
\end{equation*}
\begin{equation}\label{molli}
\varphi_k(x) := \frac{1}{k}\varphi\bigg(\frac{x}{k}\bigg)\qquad \forall k > 0.
\end{equation}
Let $(\Omega,\mathcal{F},\mathbb{P})$ be a complete probability space and $\mathcal{F}_t$, $t \in [0,\infty)$ be a normal
filtration\footnote{In other words, satisfying the usual conditions.} with $(W_t)_{t\geq0}$ a standard Wiener process on $\mathbb{R}^{m}$ with
respect to $\mathcal{F}_t$, $t\in [0,\infty)$ and $\zeta_0 = (X_0,Y_0,Z_0):\Omega\rightarrow\mathbb{R}^{2n+m}$ an $\mathcal{F}_0$-measurable map admitting Lebesgue density $m_0$ satisfying Assumption \ref{assumption4}.

The formal\footnote{See for instance Appendix B in \cite{MR1764365}. In the present paper the infinitesimal generators and their adjoints are considered as honest differential operators acting on smooth functions.} $L^2(\mu_{T_t})$-adjoint $L_t^*$ of $L_t$ is given by
\begin{equation}\label{muad}
L_{t}^* = -\ \!(y\cdot\nabla\!_x-\nabla\!_x U(x)\cdot\nabla\!_y)-(z^\top \lambda \nabla\!_y-y^\top\lambda^\top\nabla\!_z)-T_t^{-1}z^\top A \nabla\!_z +A:D_z^2.
\end{equation}
Let $\mathcal{C}^\infty_+ = \{f\in \mathcal{C}^\infty: f>0\}$. For $\Phi: \mathcal{C}^\infty_+ \rightarrow \mathcal{C}^\infty$ differentiable in the sense that for any $f\in \mathcal{C}^\infty_+$, $g\in \mathcal{C}^\infty$,
\begin{equation*}
(d\Phi(f).g)(\zeta)\quad:=\quad \lim_{s\rightarrow0}\frac{(\Phi(f+sg))(\zeta)-(\Phi(f))(\zeta)}{s}
\end{equation*}
exists for all $\zeta\in \mathbb{R}^{2n+m}$, the $\Gamma_\Phi$ operator for $L_t^{\epsilon *}$ is defined by
\begin{equation}\label{gamdef}
\Gamma_{L_t^{\epsilon *},\Phi}(h)\quad:=\quad \frac{1}{2}(L_t^{\epsilon *}\Phi(h)-d\Phi(h).(L_t^{\epsilon *} h)).
\end{equation}
It will be helpful to keep in mind that $L_t^{\epsilon *}$ does not satisfy the standard chain and product rules, due to the additional term from the second derivatives in $L_t^{\epsilon *}$; straightforward calculations give: 
\begin{align}
L_t^{\epsilon *}(\psi(f)) \ &=\ \psi'(f)L_t^{\epsilon *}f+\psi''(f)\nabla f\cdot(A^\epsilon\nabla f)\label{chain}\\
L_t^{\epsilon *}(fg) \ &=\ fL_t^{\epsilon *}(g)+gL_t^{\epsilon *}(f)+\nabla f \cdot(2A^\epsilon \nabla g)\label{product}
\end{align}
for all $f,g\in \mathcal{C}^\infty$ and $\psi\in \mathcal{C}^\infty$. Note $\nabla f\cdot(A^\epsilon\nabla f)$ and $\nabla f \cdot(2A^\epsilon \nabla g)$ are respectively the \textit{carr\'e du champ} and its symmetric bilinear operator via polarisation for $L_t^{\epsilon *}$.\\
In addition, for a scalar-valued $D_1$ and a vector-valued operator $D_2$ both acting on scalar-valued functions, denote the commutator bracket as follows:
\begin{equation}\label{commutat}
[D_1,D_2]h = (D_1(D_2 h)_1-(D_2 D_1 h)_1,\ \dots, D_1(D_2 h)_{d_{D_2}}-(D_2 D_1 h)_{d_{D_2}})
\end{equation}
for $h\in\mathcal{C}^\infty$, where $d_{D_2}\in\mathbb{N}$ is the number of elements in the output of $D_2$.\\

\textcolor{black}{
Let $\epsilon \geq 0$ and consider the perturbed system
\begin{subequations}\label{glep0}
\begin{align}
dX_t^\epsilon &= Y_t^\epsilon \, dt + \epsilon(-T_t^{-1}\nabla\!_x U(X_t^\epsilon) \, dt + dW_t^1),\\
dY_t^\epsilon &= -\nabla\!_x U(X_t^\epsilon) \, dt + \lambda^\top Z_t^\epsilon \, dt  +\epsilon(- T_t^{-1}Y_t^\epsilon \, dt + dW_t^2),\\
dZ_t^\epsilon &= -\lambda Y_t^\epsilon \, dt -T_t^{-1}A Z_t^\epsilon \, dt +\Sigma \, dW_t^3,
\end{align}
\end{subequations}
with $(X_0^\epsilon,Y_0^\epsilon,Z_0^\epsilon)=(X_0,Y_0,Z_0)$ restricted as in Assumption \ref{assumption4}, where $W_t^1,W_t^2,W_t^3$ are independent $n$-dimensional and $m$-dimensional Wiener processes. As before, the law and density of \eqref{glep0} will be denoted by $m_t^\epsilon$ along with 
\begin{equation*}
h_t^\epsilon = \frac{dm_t^\epsilon}{d\mu_{T_t}}.
\end{equation*}
Let the linear differential operators $S_t^x$, $S_t^y$ and their respective formal $L^2$-adjoints $S_t^{x\!\top}$ and $S_t^{y\!\top}$ be given by
\begin{align*}
S_t^x &= -T_t^{-1}\nabla\!_x U\cdot\nabla\!_x + \Delta_x,& S_t^y &= - T_t^{-1}y \cdot\nabla\!_y + \Delta_y,\\
S_t^{x\!\top} &= T_t^{-1} \nabla\!_x U\cdot\nabla\!_x + T_t^{-1}\Delta_x U  + \Delta_x,& S_t^{y\!\top} &= T_t^{-1} y\cdot\nabla\!_y + T_t^{-1}n + \Delta_y,
\end{align*}
so that the generator, denoted $L_t^\epsilon$, associated to \eqref{glep0} is given by the formal operator
\begin{equation*}
L_t^\epsilon = L_t + \epsilon(S_t^x + S_t^y).
\end{equation*}
Note that the formal $L^2(\mu_{T_t})$-adjoints of $S_t^x$ and $S_t^y$ coincide with $S_t^x$ and $S_t^y$, so that the formal $L^2(\mu_{T_t})$-adjoint of $L_t^\epsilon$, denoted $L_t^{\epsilon *}$, is 
\begin{equation*}
L_t^{\epsilon *} = L_t^* + \epsilon(S_t^x + S_t^y).
\end{equation*}
For any $\phi\in\mathcal{C}^\infty$ and $f:\mathbb{R}^{2n+m}\rightarrow\mathbb{R}$ smooth enough, 
\begin{equation}\label{Lchain1}
L_t^\epsilon(\phi(f)) = \phi'(f)L_t^\epsilon(f)+\phi''(f)\Gamma_t^\epsilon(f),
\end{equation}
where $\Gamma_t^\epsilon$ is the carr\'e du champ operator for $L_t^\epsilon$ given by 
\begin{equation}\label{Lchain2}
\Gamma_t^\epsilon(f)=\frac{1}{2}L_t^\epsilon(f^2)-fL_t^\epsilon(f)=\nabla f\cdot( A^\epsilon \nabla f ),
\end{equation}
$A^\epsilon\in\mathbb{R}^{(2n+m)\times (2n+m)}$ denotes the matrix with entries
\begin{equation*}
A_{ij}^\epsilon := \begin{cases}
\epsilon & \textrm{if } 1\leq i=j \leq 2n,\\
A_{i-2n,j-2n} & \textrm{if } 2n+1\leq i,j \leq 2n+m,\\
0 & \textrm{otherwise}
\end{cases}
\end{equation*}
and $A_{i,j}$ denotes the $(i,j)^\textrm{th}$ entry of $A$.
}

\subsection{Auxiliary results}
For the next result, the space of smooth functions that will be used is from \cite{MR736147}: let $\mathcal{C}^\infty_{b,c} = \mathcal{C}^\infty_{b,c}(\mathbb{R}_+\times\mathbb{R}^{2n+m})$ be the space of real-valued functions $f:\mathbb{R}_+\times\mathbb{R}^{2n+m}\rightarrow \mathbb{R}$ such that 
\begin{enumerate}
\item $f$ is measurable with respect to $\mathcal{B}(\mathbb{R}_+)\otimes\mathcal{B}(\mathbb{R}^{2n+m})$,
\item for all $t>0$, $f(t,\cdot)$ is smooth and $f$ is bounded on compact subsets of $\mathbb{R}_{>0}\times\mathbb{R}^{2n+m}$.
\end{enumerate}
\begin{prop}\label{refer2app}
Under Assumption \ref{assumption1}, \ref{assumption3} and \ref{assumption4}, for all $t>0$ and $\epsilon \geq 0$, the unique strong solution $(X_t^\epsilon,Y_t^\epsilon,Z_t^\epsilon)$ to \eqref{glep0} is well-defined and there exists some constant $\kappa>0$ such that
\begin{equation}\label{prop7}
\mathbb{E}\big[\abs*{X_t^\epsilon}^2+\abs*{Y_t^\epsilon}^2+\abs*{Z_t^\epsilon}^2\big] \leq e^{\kappa t}\mathbb{E}\big[\abs*{X_0}^2+\abs*{Y_0}^2+\abs*{Z_0}^2\big] <\infty.
\end{equation}
Furthermore, for all time $t>0$, the law of the $(X_t^\epsilon,Y_t^\epsilon,Z_t^\epsilon)$
\begin{itemize}
\item admits an almost-everywhere finite strictly positive density, also denoted $m_t^\epsilon$, w.r.t. the Lebesgue measure on $\mathbb{R}^{2n+m}$,
\item is the unique integrable distributional solution to 
\begin{equation}\label{ueq0}
\begin{cases}
\partial_t m_t^\epsilon = (L_t^\top + \epsilon(S_t^{x\!\top}+S_t^{y\!\top})) m_t^\epsilon  &\\
m_0^\epsilon = m_0, &
\end{cases}
\end{equation}
where $L_t^\top$ is the formal $L^2$-adjoint of $L_t$.
\end{itemize}
Finally when $\epsilon>0$, $m_\bullet$ and its partial derivative in time belongs in $\mathcal{C}^\infty_{b,c}$.
\end{prop}
For the notion of integrable distributional solutions, see p.338 in \cite{MR3443169}.

\begin{proof}\color{black}
Existence and uniqueness of an almost surely continuous $\mathcal{F}_t$-adapted processes follows by conditions \eqref{secbdd} and \eqref{q3}  using Theorem 3.1.1 in \cite{MR2329435}; in addition, \eqref{prop7} holds by the same theorem.
For the claim that the law admits a density, we will apply Theorem 1 in \cite{MR3648039} for the case of an arbitrary deterministic starting point. First, condition (H1) in the same article is verified. Take the sets `$K_n$' to be 
\begin{equation*}
K_p = \prod_{i=1}^{2n+m} [-p,p]
\end{equation*}
for all $p\in\mathbb{N}$. The unique solution to \eqref{glep0} with a deterministic starting point $(X_0,Y_0,Z_0) = (x_0,y_0,z_0)\in\mathbb{R}^{2n+m}$ satisfies the same bound \eqref{prop7} as before when initialising from $m_0$. Moreover, for the random sets
\begin{equation*}
\Xi_p = \{s>0:(X_u^\epsilon,X_u^\epsilon,X_u^\epsilon)\in K_p, 0\leq u\leq s\},
\end{equation*}
for $p\in\mathbb{N}$, the solution $(\hat{X}_t^\epsilon,\hat{Y}_t^\epsilon,\hat{Z}_t^\epsilon)$ to the stopped stochastic differential equation
\begin{subequations}\label{glep1}
\begin{align}
d\hat{X}_t^{\epsilon,p} &= \mathds{1}_{\Xi_p}(t) (\hat{Y}_t^{\epsilon,p} \, dt + \epsilon(-T_t^{-1}\nabla\!_x U(\hat{X}_t^{\epsilon,p}) \, dt + dW_t^1)),\\
d\hat{Y}_t^{\epsilon,p} &= \mathds{1}_{\Xi_p}(t) (-\nabla\!_x U(\hat{X}_t^{\epsilon,p}) \, dt + \lambda^\top \hat{Z}_t^{\epsilon,p} \, dt  +\epsilon(- T_t^{-1}\hat{Y}_t^{\epsilon,p} \, dt + dW_t^2)),\\
d\hat{Z}_t^{\epsilon,p} &= \mathds{1}_{\Xi_p}(t) (-\lambda \hat{Y}_t^{\epsilon,p} \, dt -T_t^{-1}A \hat{Z}_t^{\epsilon,p} \, dt +\Sigma \, dW_t^3),
\end{align}
\end{subequations}
is well-defined by the same Theorem 3.1.1 in \cite{MR2329435} and the corresponding bound
\begin{equation*}
\mathbb{E}\big[\abs{\hat{X}_t^{\epsilon,p}}^2+\abs{\hat{Y}_t^{\epsilon,p}}^2+\abs{\hat{Z}_t^{\epsilon,p}}^2\big] \leq e^{\kappa t}\big(\abs*{x_0}^2+\abs*{y_0}^2+\abs*{z_0}^2\big) <\infty
\end{equation*}
holds. Identifying $(\hat{X}_t^{\epsilon,p},\hat{Y}_t^{\epsilon,p},\hat{Z}_t^{\epsilon,p})=(X_{t\wedge \sup\Xi_p}^\epsilon,Y_{t\wedge \sup\Xi_p}^\epsilon,Z_{t\wedge \sup\Xi_p}^\epsilon)$ a.s. yields that\footnote{Alternatively Corollary 1.2 of Section 5 in \cite{MR0494490} can be used.} for any $\tau>0$
\begin{align*}
\mathbb{P}(\inf\{ t\geq 0 : (X_t^\epsilon,Y_t^\epsilon,Z_t^\epsilon)\notin K_p\} \leq \tau) &\leq \frac{1}{p^2} \mathbb{E}\big[\abs{X_{\tau\wedge \sup\Xi_p}^\epsilon}^2+\abs{Y_{\tau\wedge \sup\Xi_p}^\epsilon}^2+\abs{Z_{\tau\wedge \sup\Xi_p}^\epsilon}^2\big]\\
&\leq \frac{e^{\kappa \tau}}{p^2}\big(\abs*{x_0}^2+\abs*{y_0}^2+\abs*{z_0}^2\big)
\end{align*}
and in particular that for any $\tau>0$,
\begin{equation}\label{setch}
\mathbb{P}(\inf\{ t\geq 0 : (X_t^\epsilon,Y_t^\epsilon,Z_t^\epsilon)\notin K_p\}\leq \tau) \rightarrow 0 \quad\textrm{as } p\rightarrow \infty.
\end{equation}
Suppose for contradiction that with nonzero probability, the increasing-in-$p$ random variable $\inf\{t\geq 0: (X_t^\epsilon,Y_t^\epsilon,Z_t^\epsilon)\notin K_p\}$ converges to a real value as $p\rightarrow\infty$. Then there exists a time $\hat{\tau} >0$ such that with nonzero probability,
\begin{equation*}
\inf\{t\geq 0: (X_t^\epsilon,Y_t^\epsilon,Z_t^\epsilon)\notin K_p\}\leq \hat{\tau}\quad \forall p\in\mathbb{N},
\end{equation*}
which contradicts \eqref{setch}. Therefore condition (H1) in \cite{MR3648039} holds for \eqref{glep0}. Condition (H2) in the same article holds due the $K_p$ being compact and the smoothness assumption on $U$. It can be readily checked that the local weak H\"ormander condition (LWH) in \cite{MR3648039} also holds at any $(t,y_0)$ for any $r\in (0,t)$ and $R>0$. Therefore by Theorem 1 in \cite{MR3648039}, due to our Assumptions \ref{assumption1} and \ref{assumption3}, the solution to \eqref{glep0} with a deterministic starting point $\zeta_0\in\mathbb{R}^{2n+m}$ admits a smooth density $p_t^{\zeta_0}\in\mathcal{C}^\infty(\mathbb{R}^{2n+m})$ for all $t >0$. Moreover by Theorem 2 in \cite{MR3648039}, for any fixed $\zeta\in\mathbb{R}^{2n+m}$, $\mathbb{R}^{2n+m}\ni\zeta_0\mapsto p_t^{\zeta_0}(\zeta)$ is lower semi continuous and hence measurable, so that the $\mathbb{R}\cup\{\pm\infty\}$-valued function on $\mathbb{R}^{2n+m}$,
\begin{equation}\label{funcexp}
\int_{\mathbb{R}^{2n+m}}p_t^{\zeta_0}m_0(d\zeta_0),
\end{equation}
is integrable by Fubini's theorem and so is almost everywhere $\mathbb{R}$-valued on $\mathbb{R}^{2n+m}$. By It\^{o}'s rule, \eqref{funcexp} solves \eqref{ueq0} in the distributional sense. In addition, \eqref{ueq0} is the unique integrable solution by Theorem 9.6.3 in \cite{MR3443169}, which requires for any $T>0$ that there exists $V\in C^2(\mathbb{R}^{2n+m})$ such that 
\begin{enumerate}
\item$V(x)\rightarrow \infty$ as $\abs*{x}\rightarrow \infty$ and 
\item for some constant $C_V>0$ and all $(x,t)\in\mathbb{R}^{2n+m}\times (0,T)$, it holds that $L_t^\epsilon V \geq -C_V V$ and $\abs*{\nabla V} \leq C_V V$.
\end{enumerate}
Setting $V(x,y,z) = 1+ U(x)-U_m + \frac{\abs{y}^2}{2} + \frac{\abs{z}^2}{2}$ and calculating
\begin{equation}\label{calc1}
L_t^\epsilon\bigg(U(x)+\frac{\abs{y}^2}{2}+\frac{\abs{z}^2}{2}\bigg) =  \epsilon\bigg( -\frac{1}{T_t} \abs*{\nabla\!_x U}^2 + \Delta_x U -\frac{1}{T_t} \abs{y}^2 + n \bigg) -\frac{1}{T_t}z^\top A z+ \textrm{Tr} A
\end{equation}
it is clear from assumptions \eqref{secbdd}, \eqref{q3} and either \eqref{q1} or \eqref{q1menz} on $U$ that these conditions are satisfied since $T$ is finite; therefore there is a unique integrable solution to \eqref{ueq0} in the sense of p.338 in \cite{MR3443169}. The expression in \eqref{funcexp} is thus the density for the law of the solution to \eqref{glep0} with initial law $m_0$ at time $t$.

For $\epsilon > 0$, the time-depending law of $(X_t^\epsilon,Y_t^\epsilon,Z_t^\epsilon)$ and its partial derivative with respect to time belongs in $\mathcal{C}_{b,c}^\infty$ by Theorem 1.1 in \cite{MR736147} because \eqref{funcexp} satisfies \eqref{ueq0}.\\ 
For positivity of the density where $\epsilon = 0$, the steps in Lemma 3.4 of \cite{stuart52} can be followed; the associated control problem has the expression
\begin{align}\label{control}
\frac{d}{dt}\!\begin{pmatrix}
Q_t\\P_t\\V_t
\end{pmatrix} &= 
\begin{pmatrix}
P_t\\-\nabla U(Q_t) + \lambda^\top V_t\\ -\lambda P_t - T_t A V_t + \Sigma \frac{d\tilde{U}}{dt}
\end{pmatrix}.
\end{align}
It suffices to show that given any $S>0$ and any pair $(Q_0,P_0,V_0)\in\mathbb{R}^{2n+m}$ and $(Q^*,P^*,V^*)\in\mathbb{R}^{2n+m}$, there exists a control $\tilde{U}:[0,\infty)\rightarrow\mathbb{R}^m$ such that the solution $(Q_t,P_t,V_t)$ to \eqref{control} starting at $(Q_0,P_0,V_0)$ satisfies $(Q_S,P_S,V_S)=(Q^*,P^*,V^*)$. Fix $S>0,\epsilon>0, (Q_0,P_0,V_0)\in\mathbb{R}^{2n+m}$, $(Q^*,P^*,V^*)\in\mathbb{R}^{2n+m}$. Using the mollifier \eqref{molli}, let
\begin{equation*}
\nu:=\varphi_{\frac{1}{2}}* \mathds{1}_{(-\infty,\frac{1}{2}]},
\end{equation*}
where $*$ denotes convolution. Define a smooth function $\hat{Q}_\cdot:[0,S]\rightarrow\mathbb{R}^n$ by
\begin{equation}\label{qhat}
\hat{Q}_t = \bigg((-\nabla U(Q_0) + \lambda^\top V_0)\frac{t^2}{2} + P_0 t + Q_0\bigg)\nu\bigg(\frac{t}{S}\bigg) + \bigg((-\nabla U(Q^*) + \lambda^\top V^*)\frac{t^2}{2}+P^* t+Q^*\bigg)\nu\bigg(1-\frac{t}{S}\bigg)
\end{equation}
which satisfies
\begin{align*}
\hat{Q}_0 &= Q_0, & \hat{Q}_S &= Q^*,\\
\frac{d\hat{Q}_t}{dt}(0) &= P_0, & \frac{d\hat{Q}_t}{dt}(S) &= P^*.
\end{align*}
Define $\hat{P}_\cdot:[0,S]\rightarrow\mathbb{R}^n$ through
\begin{equation}\label{phat}
\hat{P}_t = \frac{d\hat{Q}_t}{dt}.
\end{equation}
For $\hat{V}_\cdot:[0,S]\rightarrow\mathbb{R}^m$, 
$\hat{V}_\cdot:[0,S]\rightarrow\mathbb{R}^m$ is defined with
\begin{align}
\hat{V}_t &= \lambda(\lambda^\top \lambda)^{-1}\bigg(\nabla U(\hat{Q}_t) + \partial_t^2 \bigg[\bigg(-\nabla U(Q_0)\frac{t^2}{2} + P_0 t + Q_0\bigg)\nu\bigg(\frac{t}{S}\bigg) \nonumber\\
&\quad+ \bigg(-\nabla U(Q^*)\frac{t^2}{2} + P^* t + Q^*\bigg)\nu\bigg(1-\frac{t}{S}\bigg)\bigg]\bigg) + \partial_t^2 \bigg[ V_0 \frac{t^2}{2}\nu\bigg(\frac{t}{S}\bigg) + V^* \frac{t^2}{2} \nu\bigg(1-\frac{t}{S}\bigg) \bigg]\label{vhat}
\end{align}
where $(\lambda^\top \lambda)^{-1}$ exists by $\lambda$ having rank $n$.
Note that $\hat{V}_t$ satisfies $\hat{V}_0=V_0$ and $\hat{V}_S = V^*$. 
Let the smooth function $\tilde{U}:[0,\infty)\rightarrow \mathbb{R}^m$ be given by
\begin{equation}\label{du}
\frac{d\tilde{U}}{dt} = \Sigma^{-1}\bigg(\frac{d\hat{V}_t}{dt}+\lambda\hat{P}_t+T_tA\hat{V}_t \bigg),\quad \tilde{U}(0)=0.
\end{equation}
For this $\tilde{U}$, the solution to \eqref{control} with initial condition $(Q_0,P_0,V_0)$ is $(\hat{Q}_t,\hat{P}_t,\hat{V}_t)$ by construction; its uniqueness is guaranteed by considering the system satisfied by the difference between two supposedly different solutions $(Q_t^1,P_t^1,V_t^1)$ and $(Q_t^2,P_t^2,V_t^2)$
\begin{align*}
\frac{d}{dt}\!\begin{pmatrix}
Q_t^1-Q_t^2\\P_t^1-P_t^2\\V_t^1-V_t^2
\end{pmatrix} &= 
\begin{pmatrix}
P_t^1-P_t^2\\-\nabla U(Q_t^1)-\nabla U(Q_t^2) + \lambda^\top (V_t^1-V_t^2)\\ -\lambda (P_t^1-P_t^2) - T_t A (V_t^1-V_t^2)
\end{pmatrix}
\end{align*}
and the time derivative of $\abs{Q_t^1-Q_t^2}^2 + \abs{P_t^1-P_t^2}^2 + \abs{V_t^1-V_t^2}^2$, using \eqref{secbdd} and the mean value theorem on $\abs{\nabla U (Q_t^1)-\nabla U(Q_t^2)}^2$.\\
With non-zero probability, the path of Brownian motion stays within an $\epsilon$-neighbourhood of any continuously differentiable path, in particular of $\tilde{U}$. Positivity of $m_t$ follows by the support theorem of Stroock and Varadhan (Theorem 5.2 in \cite{On2}). The above construction for the $\epsilon >0$ case follows with a simple modification; equation \eqref{control} becomes
\begin{align}\label{control2}
\frac{d}{dt}\!\begin{pmatrix}
Q_t\\P_t\\V_t
\end{pmatrix} &= 
\begin{pmatrix}
P_t -\epsilon \nabla U(Q_t) + \epsilon\frac{d\tilde{U}_1}{dt}\\-\nabla U(Q_t) + \lambda^\top V_t - \epsilon P_t + \epsilon\frac{d\tilde{U}_2}{dt}\\ -\lambda P_t - T_t A V_t + \Sigma \frac{d\tilde{U}}{dt}
\end{pmatrix},
\end{align}
so setting $\frac{d\tilde{U}_1}{dt} =  \nabla U(\hat{Q}_t)$ and $\frac{d\tilde{U}_2}{dt} = \hat{P}_t$ together with \eqref{du} gives the solution \eqref{qhat}, \eqref{phat} and \eqref{vhat} to equation \eqref{control2} and concludes the proof.
\end{proof}

\begin{remark}\label{tanirem}
For smoothness of the density, the results in \cite{MR800974} can also be considered, but there the assumptions are slightly mismatched. Firstly, the statement assumes boundedness of $\partial^\alpha V$ for any multiindex $\alpha$ where $V$ would in the case here be any of the coefficients appearing in \eqref{gl}, which fails for $\abs*{\alpha}=0$. Secondly, in case of (A.1), condition (i) fails and in case of (A.2), condition (i) fails due to $V_0$. Both of these pedantries seem possibly unneeded in the proofs but we avoid this in favour of the more recent work \cite{MR3648039}.
\end{remark}

The results below up to Proposition \ref{dissipation} are directed towards showing dissipation of a distorted entropy as required in the proof of Theorem \ref{convergence}. 

\subsection{Lyapunov function}

\begin{lemma}\label{Lya} 
Under Assumption \ref{assumption1}, \ref{assumption3} and \ref{assumption4}, there exist constants $a,b,c,d,\delta>0$ independent of $\epsilon$ such that $R:\mathbb{R}^{2n+m+1}\rightarrow\mathbb{R}$ defined as
\begin{equation}\label{R}
R(x,y,z,T_t)\quad := \quad U(x)+\frac{\abs{y}^2}{2}+\frac{\abs{z}^2}{2}+\delta T_t \bigg( y^\top \lambda^{-1} z + \frac{1}{2} x\cdot y\bigg)
\end{equation}
satisfies
\begin{equation}\label{Rbd}
a(\abs{x}^2+\abs{y}^2+\abs{z}^2)-d\quad\leq \quad R(x,y,z,T_t)\quad \leq\quad b (\abs{x}^2+\abs{y}^2+\abs{z}^2) + d,
\end{equation}
and there exists $0<\epsilon'\leq 1$ for which  $\epsilon \leq \epsilon'$ implies
\begin{align}
L_t^\epsilon R &\leq  -cT_t R+\frac{d}{T_t}.\label{Lyaprop}
\end{align}
\end{lemma}
\begin{proof}
By the quadratic assumption (\ref{q1menz}) on $U$ and boundedness Assumption \ref{assumption3} on $T_t$, 
it is clear that there exists $\hat{\delta} > 0$ such that the first statement \eqref{Rbd} holds with $d=\max(\abs*{U_{\!m}},\abs*{U_{\!M}})$ for all $\delta\in(0,\hat{\delta}]$. 
For the second statement in \eqref{Lyaprop}, fix $\delta > 0$ to be
\begin{equation}\label{deltbound}
\delta \leq \min\bigg(\hat{\delta}, 1, \frac{4r_1^2}{(r_2+1)\sup_{s\geq 0} T_s}, 2\Big(\sup_{s\geq 0} T_s\Big)^{-1}, 
 \frac{A_c}{2}\bigg[\bigg(\frac{\abs*{\lambda}^2}{2r_1}+1+\frac{r_2}{r_1}\abs*{\lambda^{-1}}^2\bigg)\Big(\sup_{s\geq 0} T_s\Big)^2+2(\abs*{A}^2+1)\abs*{\lambda^{-1}}^2\bigg]^{-1}\bigg),
\end{equation}
where $\abs{\cdot}$ is the operator norm here and $A_c>0$ is the coercivity constant of the positive definite matrix $A$. 
Consider each of the terms of $L_t^\epsilon(R)$ seperately.
\begin{align}
L_t^\epsilon\bigg(U(x)+\frac{\abs{y}^2}{2}+\frac{\abs{z}^2}{2}\bigg) &=  \epsilon\bigg( -\frac{1}{T_t} \abs*{\nabla\!_x U}^2 + \Delta_x U -\frac{1}{T_t} \abs{y}^2 + n \bigg) -\frac{1}{T_t}z^\top A z+ \textrm{Tr} A. \label{LtR1}\\
&\leq \epsilon\bigg( -\frac{1}{T_t} ( r_1^2\abs*{x}^2 - 2 r_1 U_g ) + n|D_x^2 U|_\infty  -\frac{1}{T_t} \abs{y}^2 + n \bigg) -\frac{1}{T_t}z^\top A z+ \textrm{Tr} A,\label{1}
\end{align}
where the last inequality follows from \eqref{q2} and $\nabla\!_x U\cdot x \leq \frac{1}{2 r_1}\abs*{\nabla\!_x U}^2 + \frac{r_1}{2}\abs*{x}^2$. Using the quadratic bound (\ref{q3}) on $\nabla\!_x U$, we get
\begin{align}
L_t^\epsilon(y^\top \lambda^{-1} z) &= -\epsilon T_t^{-1}y^\top\lambda^{-1}z -\nabla\!_x U \lambda^{-1} z+\abs{z}^2-\abs{y}^2-T_t^{-1} z^\top A(\lambda^{-1})^\top y \label{LtR2}\\
&\leq \frac{\abs{y}^2}{4} + 2T_t^{-2}(\abs*{A}^2+\epsilon^2)\abs*{\lambda^{-1}}^2\abs{z}^2 + \frac{r_1}{4r_2}\abs{\nabla\!_x U}^2+\frac{r_2}{r_1}\abs*{\lambda^{-1}}^2\abs{z}^2+\abs{z}^2-\abs{y}^2 \nonumber\\
& \leq \frac{r_1}{4} \abs{x}^2+\frac{r_1}{4r_2} U_{\!g}-\frac{3}{4}\abs{y}^2 +\bigg( 1+\frac{r_2}{r_1}\abs*{\lambda^{-1}}^2+ 2T_t^{-2}(\abs*{A}^2+1)\abs*{\lambda^{-1}}^2\bigg)\abs{z}^2. \label{2}
\end{align}
Then using also \eqref{q2} for $\nabla\!_x U\cdot x$, we get
\begin{align}
L_t^\epsilon(x\cdot y) &= -\epsilon T_t^{-1}y\cdot\nabla\!_x U -\epsilon T_t^{-1} x\cdot y +  \abs{y}^2-\nabla\!_x U\cdot x+ z^\top\lambda x\label{LtR3}\\
&\leq \epsilon T_t^{-1}\bigg(\bigg(\frac{r_2}{2}+\frac{1}{2}\bigg)\abs*{x}^2 + \abs*{y}^2 + \frac{U_g}{2}\bigg) + \abs{y}^2-r_1\abs{x}^2+U_{\!g}+\frac{\abs*{\lambda^\top}^2}{r_1}\abs{z}^2+\frac{r_1}{4}\abs{x}^2. \label{3}
\end{align}
Combining \eqref{R}, \eqref{1}, \eqref{2}, \eqref{3} and taking $\epsilon \leq 1$,
\begin{align}
L_t^\epsilon (R(x,y,z,T_t)) &= L_t^\epsilon\bigg(U(x)+\frac{\abs{y}^2}{2}+\frac{\abs{z}^2}{2}\bigg)+\delta T_t L_t(y^\top \lambda^{-1} z) +  \frac{\delta T_t}{2} L_t(x\cdot y)\label{LtR4}\\
&\leq - \delta T_t \frac{r_1}{8}\abs{x}^2-\delta T_t\frac{1}{4}\abs{y}^2-\frac{1}{T_t}z^\top A z + C +\frac{ 2\epsilon r_1U_g}{T_t} \nonumber\\
&\quad +\delta T_t\bigg[\ \frac{\abs*{\lambda^\top}^2}{2 r_1}+\bigg( 1+\frac{r_2}{ r_1}\abs*{\lambda^{-1}}^2+ 2T_t^{-2}(\abs*{A}^2+1)\abs*{\lambda^{-1}}^2 \bigg)\bigg]\abs{z}^2. \nonumber\\
&\quad +\epsilon T_t^{-1}\bigg[\bigg(\frac{\delta T_t}{4}(r_2+1) - r_1^2\bigg)\abs*{x}^2 + \bigg(\frac{\delta T_t}{2} -1\bigg)\abs*{y}^2 \bigg],
\end{align}
where $0<C= n(|D_x^2 U|_\infty +1) + \textrm{Tr}A + \delta \frac{r_1U_g}{4r_2} \sup_{s\geq 0} T_s +\delta U_g (\frac{1}{4} + \frac{\sup_{s\geq 0} T_s}{2})$. Therefore for $\delta$ satisfying the bound \eqref{deltbound}, the first square bracket term satisfies
\begin{equation*}
\delta T_t\bigg[\ \frac{\abs*{\lambda^\top}^2}{2 r_1}+\bigg( 1+\frac{r_2}{ r_1}\abs*{\lambda^{-1}}^2+ 2T_t^{-2}(\abs*{A}^2+1)\abs*{\lambda^{-1}}^2 \bigg)\bigg] \leq \frac{1}{2}\Big(\sup_{s\geq 0} T_s\Big)^{-1}A_c\abs*{z}^2
\end{equation*}
and the second square bracket term is negative, where the assumption that $T_t$ is bounded above for all time has been used. Rearranging,
\begin{align*}
L_t^\epsilon(R(x,y,z,T_t)) &\leq - \delta T_t\frac{r_1}{8}\abs{x}^2-\delta T_t\frac{1}{4}\abs{y}^2 - \frac{A_c}{2 \sup_s T_s}\abs{z}^2+ C + \frac{2\epsilon r_1 U_g }{T_t}\\
&\leq-cT_tR+C' T_t^{-1},
\end{align*}
where $c>0$ is small enough, $C'>0$ is large enough and the right inequality of (\ref{Rbd}) has been used. The result follows using $d=\max ( C',\abs*{U_{\!m}},\abs*{U_{\!M}})$.
\end{proof}

\begin{lemma}\label{expRbd}\color{black}
Under Assumption \ref{assumption1}, \ref{assumption3}, \ref{assumption4} and for $0\leq \epsilon \leq \epsilon'$, the solution $(X_t^\epsilon,Y_t^\epsilon,Z_t^\epsilon)$ to \eqref{glep0} is such that $\frac{\mathbb{E}[R(X_t^\epsilon,Y_t^\epsilon,Z_t^\epsilon,T_t)]}{(\ln(e+t))^2}$ is bounded uniformly in time $t$ and in $\epsilon$.
\end{lemma}

\begin{proof}\color{black}
It is equivalent to prove the result for $R+d>0$ in place of $R$. Let $R_t := R(X_t^\epsilon,Y_t^\epsilon,Z_t^\epsilon,T_t).$
Consider the following terms separately for $t > t_0$,
\begin{equation}\label{splitep}
\frac{d}{dt}\mathbb{E}[(R_t+d)] =\partial_t \mathbb{E}[(R_t+d)] + T_t'\partial_{T_t} \mathbb{E}[(R_t+d)].
\end{equation}
Firstly, by \eqref{R}, the left hand bound in \eqref{Rbd} and the Assumption \ref{assumption3},
\begin{align}
T_t'\partial_{T_t} \mathbb{E}[R_t+d]&= T_t'\mathbb{E}\bigg[\delta\bigg((Y_t^\epsilon)^\top \lambda^{-1} Z_t +\frac{1}{2}X_t^\epsilon\cdot Y_t^\epsilon\bigg)\bigg]\nonumber\\
&\leq \abs{T_t'} \mathbb{E}\bigg[\delta\abs*{(Y_t^\epsilon)^\top \lambda^{-1} Z_t^\epsilon +\frac{1}{2}X_t^\epsilon\cdot Y_t^\epsilon}\bigg]\nonumber\\
&\leq\frac{B}{t}\mathbb{E}[R_t+d]\label{partialt}
\end{align}
for a constant $B\geq 0$ independent of $\epsilon$. The exchange in the order of differentiation and integration is justified by the right hand bound of \eqref{Rbd} together with the mean value theorem and the fact that for any $T>0$ the inequality 
\begin{align}
\mathbb{E}\bigg[\sup_{0\leq t\leq T}\abs*{X_t^\epsilon}^2\bigg] = \mathbb{E}\bigg[\bigg(\sup_{0\leq t\leq T}\abs*{X_t^\epsilon}\bigg)^2\ \bigg] &= \mathbb{E}\bigg[\bigg(\sup_{0\leq t\leq T}\abs*{X_0 + \int_0^t (Y_s^\epsilon + \epsilon(-T_s^{-1}\nabla\!_x U(X_s^\epsilon))) \, ds + \epsilon\int_0^t dW_s^1}\bigg)^2\ \bigg]\nonumber\\
&\leq K_T^1\mathbb{E}\int_0^T (1+\abs*{X_s^\epsilon}^2 + \abs*{Y_s^\epsilon}^2)ds < \infty\label{supbd}
\end{align}
holds for some constant $K_T^1 > 0$ depending on $T$ but independent of $\epsilon$ (using $\epsilon\leq\epsilon'$), where \eqref{q3}, \eqref{prop7}, Jensen's inequality and Doob's maximal inequality have been used. Similar expressions for $\mathbb{E}[\sup_{0\leq t\leq T}\abs*{Y_t^\epsilon}^2]$ and $\mathbb{E}[\sup_{0\leq t\leq T}\abs*{Z_t^\epsilon}^2]$ hold.\\
Considering together with the other term in \eqref{splitep}, by It\^{o}'s rule and for $t_0<s<t$,
\begin{equation*}
\mathbb{E}R_t - \mathbb{E}R_s = \mathbb{E}\int_s^t (T_u'\partial_{T_t}R + L_u^\epsilon R)(X_u^\epsilon,Y_u^\epsilon, Z_u^\epsilon, T_u)du = \int_s^t \mathbb{E} (T_u'\partial_{T_t}R + L_u^\epsilon R)(X_u^\epsilon,Y_u^\epsilon, Z_u^\epsilon, T_u)du,
\end{equation*}
where the last equality follows by Fubini, \eqref{partialt} and \eqref{LtR4} together with \eqref{LtR1}, \eqref{LtR2}, \eqref{LtR3}, \eqref{secbdd}, \eqref{q3}, \eqref{supbd}. Property \eqref{Lyaprop} from Lemma \ref{Lya} and \eqref{partialt} give
\begin{align}
\mathbb{E}[R_t+d] - \mathbb{E}[R_s+d] &\leq\int_s^t\bigg(\frac{B}{u}\mathbb{E}[R_u+d] + \mathbb{E}[-cT_u R_u+dT_u^{-1}]\bigg)du\nonumber\\
&\leq \int_s^t \bigg(\bigg(\frac{B}{u}-cT_u\bigg)\mathbb{E}[R_u+d]+B'T_u^{-1}\bigg)du\label{partialT}
\end{align}
for a constant $B'\geq 0$ independent of $\epsilon$. In order to obtain a differential inequality for all $t$ as opposed to almost all $t$ from Lebesgue differentiation theorem, the above expression can be divided by $t-s$, mollified in time with \eqref{molli} for $0 < k < 1$ and have $s\rightarrow t$ taken as follows. For $t>t_0+1$,
\begin{align*}
&\lim_{\hat{\epsilon}\rightarrow 0} \frac{1}{2\hat{\epsilon}} \int_{t-1}^{t+1} \varphi_k(t-u) (\mathbb{E}[R_{u+\hat{\epsilon}}+d] - \mathbb{E}[R_{u-\hat{\epsilon}}+d])du\\
\qquad  &\leq \lim_{\hat{\epsilon}\rightarrow 0} \frac{1}{2\hat{\epsilon}} \int_{t-1}^{t+1} \varphi_k(t-u) \int_{u-\hat{\epsilon}}^{u+\hat{\epsilon}} \bigg(\bigg(\frac{B}{u'}-cT_{u'}\bigg)\mathbb{E}[R_{u'}+d]+B'T_{u'}^{-1}\bigg)du'du\\
&\leq  \int_{t-1}^{t+1} \varphi_k(t-u) \lim_{\hat{\epsilon}\rightarrow 0} \frac{1}{2\hat{\epsilon}}\int_{u-\hat{\epsilon}}^{u+\hat{\epsilon}} \bigg(\bigg(\frac{B}{u'}-cT_{u'}\bigg)\mathbb{E}[R_{u'}+d]+B'T_{u'}^{-1}\bigg)du'du\\
&= \int_{t-1}^{t+1} \varphi_k(t-u) (\bigg(\bigg(\frac{B}{u}-cT_{u}\bigg)\mathbb{E}[R_{u}+d]+B'T_{u}^{-1}\bigg)du,
\end{align*}
where the second-to-last inequality follows by Fatou's lemma and dominated convergence; the last line follows from the Lebesgue differentiation theorem. In other words, for $\hat{g}:\mathbb{R}_+\rightarrow\mathbb{R}$ given by $\hat{g}(t) := Bt^{-1}-cT_t$, it holds that 
\begin{align*}
\frac{d}{dt}(\varphi_k\ast \mathbb{E}[R_\bullet+d]) (t) &\leq (\varphi_k \ast (\hat{g}\mathbb{E}[R_\bullet+d]+B'T_\bullet^{-1}))(t)\\
&\leq \bigg(\frac{B}{t-1}-c\inf_{t-1\leq s \leq t+1}T_s\bigg)(\varphi_k \ast \mathbb{E}[R_\bullet+d])(t)+B'\Big(\inf_{t-1\leq s \leq t+1}T_s\Big)^{-1}.
\end{align*}
Using Assumption \ref{assumption3}, we get
\begin{equation*}
\frac{d}{dt}(\varphi_k\ast\mathbb{E}[R_\bullet+d])(t) \leq -\frac{cE}{2}(\ln (t+1))^{-1}(\varphi_k\ast\mathbb{E}[R_\bullet+d])(t)+ \frac{B'}{E}\ln (t+1) 
\end{equation*}
for $t>t_0^*$, where $t_0^*>t_0$ is such that $\frac{B}{t-1}\leq \frac{cE}{2\ln (t+1)}$ for $t>t_0^*$. Then, for $t>t_0^*$,
\begin{align*}
\frac{d}{dt}\bigg(e^{\frac{cE}{2}\!\int_{t_0^*}^t\! (\ln (s+1))^{-1} ds}(\varphi_k\ast\mathbb{E}[R_\bullet+d])(t)\bigg)&\leq \frac{B'}{E} \ln(t+1) e^{\frac{cE}{2}\!\int_{t_0^*}^t\! (\ln (s+1))^{-1} ds},\\
(\varphi_k\ast\mathbb{E}[R_\bullet+d])(t) &\leq (\varphi_k\ast\mathbb{E}[R_\bullet+d])(t_0^*)e^{-\frac{cE}{2}\!\int_{t_0^*}^t\! (\ln (s+1))^{-1} ds}\\
&\quad + \int_{t_0^*}^t \frac{B'}{E} \ln(s+1) e^{-\frac{cE}{2}\!\int_s^t\! (\ln (u+1))^{-1} du} ds\\
&\leq (\varphi_k\ast\mathbb{E}[R_\bullet+d])(t_0^*) + \frac{B'}{E} \int_{t_0^*}^t \ln (s+1) \ e^{-\frac{cE}{2}\!\int_s^t\! (\ln (u+1))^{-1} du} ds\\
&\leq (\varphi_k\ast\mathbb{E}[R_\bullet+d])(t_0^*) + \frac{B'}{E}\ln (t+1) \int_{t_0^*}^t \ e^{-\frac{cE}{2\ln (t+1)}(t-s) } ds\\
&\leq (\varphi_k\ast\mathbb{E}[R_\bullet+d])(t_0^*) + B'\frac{2(\ln (t+1))^2}{cE^2}\Big(1-e^{-\frac{cE}{2}(\ln (t+1))^{-1}(t-t_0^*)}\Big)\\
&\leq (\varphi_k\ast\mathbb{E}[R_\bullet+d])(t_0^*) + B'\frac{2(\ln (t+1))^2}{cE^2}.
\end{align*}
Using \eqref{partialT}, 
\begin{equation}\label{simspi}
\mathbb{E}[R_t+d] = \mathbb{E}[R_t+d]\int_{t-2k}^t \varphi_k(t-k-s)ds \leq \int_{t-2k}^t\varphi_k(t-k-s)\mathbb{E}[R_s+d]ds + \bar{g}(2k)
\end{equation}
for some $\bar{g}:\mathbb{R}\rightarrow\mathbb{R}$ satisfying $\bar{g}'(k')\rightarrow 0$ as $k'\rightarrow 0$, so that for $t>t_0^*+2$,
\begin{equation*}
\mathbb{E}[R_t+d]\leq (\varphi_k\ast\mathbb{E}[R_\bullet+d])(t_0^*) + B'\frac{2(\ln (t-k+1))^2}{cE^2} + \bar{g}(2k),
\end{equation*}
where the first term on the right can be bounded independently of $k$ via Proposition \ref{refer2app} and \eqref{Rbd} in a similar spirit to \eqref{simspi}. The result follows by taking $k\rightarrow 0$.
\end{proof}

\begin{corollary}\label{moments}
\color{black}
Under Assumption \ref{assumption1}, \ref{assumption3}, \ref{assumption4} and for $0\leq \epsilon \leq \epsilon'$, the solution $(X_t^\epsilon,Y_t^\epsilon,Z_t^\epsilon)$ to \eqref{glep0} is such that $\frac{\mathbb{E}[\abs*{X_t^\epsilon}^2+\abs*{Y_t^\epsilon}^2+\abs*{Z_t^\epsilon}^2]}{(\ln(e+t))^2}$ is bounded uniformly in time and in $\epsilon$.
\end{corollary}
\begin{proof}\color{black}
By the lower bound on $R$ in \eqref{Rbd},
\begin{equation*}
\mathbb{E}\big[\abs*{X_t^\epsilon}^2+\abs*{Y_t^\epsilon}^2+\abs*{Z_t^\epsilon}^2\big] \leq \mathbb{E}\bigg[\frac{R(X_t^\epsilon,Y_t^\epsilon,Z_t^\epsilon,T_t)+d}{a}\bigg],
\end{equation*}
which concludes by Lemma \ref{expRbd}.
\end{proof}

\subsection{Form of Distorted Entropy}

For $\epsilon \geq 0$, let $H^\epsilon(t)$ be the distorted entropy
\begin{align}
H^\epsilon(t)\ &=\ \int\bigg( \frac{\abs*{2\nabla\!_x h_t^\epsilon+8S_0(\nabla\!_y h_t^\epsilon+\lambda^{-1}\nabla\!_z h_t^\epsilon)}^2}{h_t^\epsilon}\ +\ \frac{\abs*{\nabla\!_y h_t^\epsilon+S_1\lambda^{-1}\nabla\!_z h_t^\epsilon}^2}{h_t^\epsilon} +\ \beta(T_t^{-1})h_t^\epsilon \ln(h_t^\epsilon)\bigg)d\mu_{T_t},\label{entdef}
\end{align}
where $S_0, S_1>0$ are the constants
\begin{align}
S_0 &:= (1+|D_x^2 U |_\infty^2)^{\frac{1}{2}},\label{s0}\\
S_1 &:= 2+\textcolor{black}{28}S_0^2+1024S_0^4\label{s1}
\end{align}
and $\beta$ is a second order polynomial (see \eqref{betdef} and \eqref{betacs}) to be determined by Proposition \ref{gambound} and independent of $\epsilon$.
{
\begin{remark}\label{2term}
This particular expression for $H$ is not necessarily the best possible choice. However the above is a working expression and optimality is left as future work; see also \cite{Vaes}.
\end{remark}
}

We now present an auxiliary result which can be found as Lemma 12 of \cite{meta}. We state this along with its proofs from \cite{meta}.

\begin{lemma}\label{combound}
For 
\[
\Phi^*(h) = \frac{\abs*{M\nabla h}^2}{h},
\]
where $M$ is matrix-valued,
\[
\Gamma_{L_t^{\epsilon *},\Phi^*}(h) > \frac{(M\nabla h)\cdot[L_t^{\epsilon *},M\nabla]h}{h}
\]
holds for all $h\in\mathcal{C}_+^\infty$.
\end{lemma}
Notice the $\Phi^*$ appears in the first two terms of $H^\epsilon(t)$.
\begin{proof}

The second term in definition \eqref{gamdef} of $\Gamma_{L_t^{\epsilon *},\Phi^*}(h)$ can be calculated to be
\begin{equation}\label{second}
-d\Phi^*(h).L_t^{\epsilon *} h = - \frac{2}{h}(M\nabla h)\cdot (M\nabla L_t^{\epsilon *} h) + \frac{L_t^{\epsilon *} h}{h^2}\abs*{M \nabla h}^2.
\end{equation}
Using \eqref{product} and \eqref{chain} for $L_t^{\epsilon *}$, the first term in the definition of $\Gamma_{L_t^{\epsilon *},\Phi}$ in \eqref{gamdef} can be calculated to be
\begin{align*}
L_t^{\epsilon *}(\Phi^*(h)) &=  \frac{1}{h}L_t^{\epsilon *}(\abs{M\nabla h}^2)+\abs*{M\nabla h}^2 L_t^{\epsilon *} \bigg(\frac{1}{h}\bigg) - \sum_i \frac{4}{h^2}(M\nabla h)_i\nabla h \cdot(A^\epsilon \nabla (M\nabla h)_i) \\
& = \frac{2}{h}\bigg((M\nabla h)\!\cdot\! L_t^{\epsilon *} M\nabla h + \!\sum_i \nabla (M\nabla h)_i \cdot(A^\epsilon\nabla (M\nabla h)_i )\bigg)\\
&\quad +\abs*{M\nabla h}^2 L_t^{\epsilon *} \bigg(\frac{1}{h}\bigg) \!-\! \sum_i \!\frac{4}{h^2}(M\nabla h)_i\nabla h \cdot(A^\epsilon \nabla (M\nabla h)_i),
\end{align*}
\textcolor{black}{
where the last summands can be bounded below because $A$ and $A^\epsilon$ are positive definite, that is, for $v,w\in\mathbb{R}^{2n+m}$ and $c\in\mathbb{R}$, we have
\begin{equation*}
(cv-c^{-1}w)^\top A^\epsilon(cv-c^{-1}w) = c^2 v^\top A^\epsilon v + c^{-2}w^\top A^\epsilon w - 2v^\top A^\epsilon w >0.
\end{equation*}
Therefore, for all $i$,
\begin{align*}
-\frac{4}{h^2}&(M\nabla h)_i \nabla h \cdot(A^\epsilon\nabla (M\nabla h)_i) > -\frac{2}{h^3}(M\nabla h)_i^2(\nabla h)\cdot A^\epsilon \nabla h - \frac{2}{h}(\nabla (M\nabla h)_i)\cdot  A^\epsilon \nabla (M\nabla h)_i,
\end{align*}}
which produces the bound
\begin{align*}
L_t^{\epsilon *}(\Phi^*(h)) > \frac{2}{h}(M\nabla h)\!\cdot\! L_t^{\epsilon *} M\nabla h + \abs*{M\nabla h}^2 L_t^{\epsilon *}\!\bigg(\frac{1}{h}\bigg)-\frac{2}{h^3}\abs*{M\nabla h}^2\nabla h\cdot (A^\epsilon \nabla h).
\end{align*}
Combining this with \eqref{second} then using \eqref{chain} and noting
\begin{equation}\label{combrac}
[L_t^{\epsilon *},M\nabla]h = (L_t^{\epsilon *}(M\nabla h)_1-(M\nabla L_t^{\epsilon *} h)_1,\dots, L_t^{\epsilon *}(M\nabla h)_{2n+m}-(M\nabla L_t^{\epsilon *} h)_{2n+m}),
\end{equation}
we get
\begin{align*}
\Gamma_{L_t^{\epsilon *},\Phi^*}(h) & > \frac{1}{h}(M\nabla h)\cdot[L_t^{\epsilon *},M\nabla]h+\frac{1}{2}\abs*{M\nabla h}^2\bigg( L_t^{\epsilon *}\bigg(\frac{1}{h}\bigg)+ \frac{L_t^{\epsilon *} h}{h^2}\bigg) -\frac{1}{h^3}\abs*{M\nabla h}^2\nabla h\cdot (A^\epsilon \nabla h)\\
& = \frac{1}{h}(M\nabla h)\cdot[L_t^{\epsilon *},M\nabla]h.
\end{align*}
\end{proof} 
Using Lemma \ref{combound}, the following proposition shows the distorted entropy (\ref{entdef}) is a useful one.
\begin{prop}\label{gambound}\color{black}
There exist $\beta_0, \beta_1, \beta_2>0$ independent of $\epsilon$ such that for $\beta:\mathbb{R}\rightarrow\mathbb{R}$ given by
\begin{align}
\beta(x) &:= 1+\beta_0 + \beta_1 x  + \beta_2 x^2, \label{betdef}
\end{align}
the operator $\Psi_{T_t}$,
\begin{align}\label{PsiTt}
\Psi_{T_t} (h)\  &:= \ \frac{\abs*{2\nabla\!_x h+8S_0(\nabla\!_y h+\lambda^{-1}\nabla\!_z h)}^2}{h}+\frac{\abs*{\nabla\!_y h+S_1\lambda^{-1}\nabla\!_z h}^2}{h}+\beta(T_t^{-1})h \ln(h)
\end{align}
for $h\in\mathcal{C}_+^\infty$, satisfies
\begin{equation}\label{gamineq}
\Gamma_{L_t^{\epsilon *},\Psi_{T_t}}(h) \geq \frac{\abs{\nabla h}^2}{h}
\end{equation}
for all $0\leq \epsilon \leq 1$.
\end{prop}
{
\begin{remark}
$\beta_0$, $\beta_1$, $\beta_2$ depend on $|D_x^2U|_\infty$, $\abs*{A}$ and
\begin{equation}\label{lambdahat}
\hat{\lambda}^2 := \max\Big(\abs*{\lambda}^2,\abs*{\lambda^\top}^2,\abs*{\lambda^{-1}}^2,\abs*{\lambda^{-1}}\abs*{\lambda^\top}\Big).
\end{equation}
$H$ satisfying property \eqref{gamineq} is crucial for proving dissipation in Proposition \ref{dissipation}. 
\end{remark}
}

\begin{proof}
Let $\Phi_1$,$\Phi_2$,$\Phi_3$ be the terms in $\Psi_{T_t}$,
\begin{subequations}\label{thePhis}
\begin{align}
\Phi_1(h) &:= \frac{\abs*{2\nabla\!_x h+8S_0(\nabla\!_y h+\lambda^{-1}\nabla\!_z h)}^2}{h},\\
\Phi_2(h) &:= \frac{\abs*{\nabla\!_y h+S_1\lambda^{-1}\nabla\!_z h}^2}{h},\\
\Phi_2(h) &:= h \ln(h).
\end{align}
\end{subequations}
Note that the $\Gamma_\Phi$ operator is linear in the $\Phi$ argument by linearity of $L_t^{\epsilon *}$, so that (\ref{gamineq}) can be written as
\begin{equation*}
\Gamma_{L_t^{\epsilon *},\Phi_1}(h)+\Gamma_{L_t^{\epsilon *},\Phi_2}(h)+\beta(T_t^{-1})\Gamma_{L_t^{\epsilon *},\Phi_3}(h)\geq \frac{\abs*{\nabla h}^2}{h}.
\end{equation*}
Consider $\Gamma_{L_t^{\epsilon *},\Phi_3}$ first. Using the definition (\ref{gamdef}) of $\Gamma_{L_t^{\epsilon *},\Phi}$, the product and chain rule (\ref{product}) and (\ref{chain}) for $L_t^{\epsilon *}$, \textcolor{black}{and the coercivity property of $A$, we get}
\textcolor{black}{
\begin{align}
\Gamma_{L_t^{\epsilon *},\Phi_3}(h) &= \frac{1}{2}\bigg((\ln h+1) L_t^{\epsilon *}h + \frac{1}{h}\nabla h\cdot(A^\epsilon\nabla h) - (1+\ln h)L_t^{\epsilon *}h\bigg)\nonumber\\
&= \frac{1}{2h}\nabla h \cdot (A^\epsilon\nabla h)\nonumber\\
&\geq \frac{1}{2h}(\epsilon \abs*{\nabla\!_x h}^2 + \epsilon \abs*{\nabla\!_y h}^2 + A_c\abs*{\nabla\!_z h}^2).\label{zd}
\end{align}}

Since the goal is to show \eqref{gamineq}, the availability of \eqref{zd} counteracts any negative contributions in the $z$-derivative term, and any order $\epsilon$ contributions in the $x$- and $y$-derivatives, from $\Gamma_{L_t^{\epsilon *},\Phi_1}$ and $\Gamma_{L_t^{\epsilon *},\Phi_2}$; this counterweight materialises as $\beta$.

For $\Gamma_{L_t^{\epsilon *},\Phi_1}$ and $\Gamma_{L_t^{\epsilon *},\Phi_2}$, $S_0>0$ and $S_1>0$ as in \eqref{s0}-\eqref{s1} are used.
Lemma \ref{combound} gives
\begin{align*}
h\Gamma_{L_t^*,\Phi_2}(h) & > (\nabla\!_y+S_1\lambda^{-1}\nabla\!_z) h\cdot [L_t^{\epsilon *},\nabla\!_y+S_1\lambda^{-1}\nabla\!_z] h\\
& =  (\nabla\!_y+S_1\lambda^{-1}\nabla\!_z) h\cdot (\nabla\!_x-\lambda^\top\nabla\!_z + \epsilon T_t^{-1}\nabla\!_y + S_1\nabla\!_y + S_1T_t^{-1}\lambda^{-1}A\nabla\!_z)h\\
& = \nabla\!_x h\cdot\!\nabla\!_y h-\!\nabla\!_y h\cdot(\lambda\!^\top\nabla_{\!z} h) + \epsilon T_t^{-1}\abs*{\nabla\!_y h}^2 + S_1 \abs*{\nabla_{\!y}h}^2\!\!+S_1T_t^{-1}\nabla_{\!y}h\cdot (\lambda^{-1}A\nabla_{\!z}h)\\
&\quad + S_1\nabla_{\!x}h\cdot(\lambda^{-1}\nabla_{\!z}h) - S_1(\lambda^{-1}\nabla_{\!z}h)\cdot(\lambda^\top\nabla_{\!z} h) + \epsilon T_t^{-1}S_1\nabla\!_y h \cdot (\lambda^{-1}\nabla\!_z h)\\
&\quad + S_1^2\nabla_{\!y}h\cdot(\lambda^{-1}\nabla_{\!z}h) + S_1^2T_t^{-1}(\lambda^{-1}\nabla_{\!z}h)\cdot(\lambda^{-1}A\nabla_{\!z}h)\\
\end{align*}
In order to get a bound in terms of $(\partial_ih)^2$ terms rather than $\partial_i h\partial_j h$ terms, we bound the $\partial_i h\partial_j h$ terms in the following ways,
\begin{align*}
\nabla\!_x h\cdot\!\nabla\!_y h &\geq -\frac{1}{2}\abs*{\nabla_{\!x}h}^2-\frac{1}{2}\abs*{\nabla_{\!y}h}^2,\\
-\!\nabla\!_y h\cdot(\lambda\!^\top\nabla_{\!z} h) &\geq -\frac{1}{6}\abs*{\nabla_{\!y}h}^2 -\frac{3}{2}\abs*{\lambda^\top}^2\abs*{\nabla_{\!z}h}^2,\\
S_1T_t^{-1}\nabla_{\!y}h\cdot (\lambda^{-1}A\nabla_{\!z}h) &\geq -\frac{1}{6}\abs*{\nabla_{\!y}h}^2 -\frac{3}{2}S_1^2T_t^{-2}\abs*{\lambda^{-1}}^2\abs*{A}^2\abs*{\nabla_{\!z}h}^2,\\
S_1\nabla_{\!x}h\cdot(\lambda^{-1}\nabla_{\!z}h) &\geq -\frac{1}{2}\abs*{\nabla_{\!x}h}^2 -\frac{1}{2} S_1^2 \abs*{\lambda^{-1}}^2\abs*{\nabla_{\!z}h}^2,\\
\epsilon T_t^{-1} S_1 \nabla_{\!y}h\cdot(\lambda^{-1}\nabla_{\!z}h) &\geq -\epsilon T_t^{-1} \abs*{\nabla_{\!y}h}^2 -\frac{\epsilon}{4}S_1^2 T_t^{-1}\abs*{\lambda^{-1}}^2\abs*{\nabla_{\!z}h}^2,\\
S_1^2\nabla_{\!y}h\cdot(\lambda^{-1}\nabla_{\!z}h) &\geq -\frac{1}{6}\abs*{\nabla_{\!y}h}^2 -\frac{3}{2}S_1^4\abs*{\lambda^{-1}}^2\abs*{\nabla_{\!z}h}^2
\end{align*}
and using \eqref{lambdahat} gives 
\begin{align*}
h\Gamma_{L_t^*,\Phi_2}(h) &> -\abs*{\nabla_{\!x}h}^2+(1+\textcolor{black}{28}S_0^2+1024S_0^4)\abs*{\nabla_{\!y}h}^2 \\
&\quad - \frac{1}{2}\hat{\lambda}^2\bigg( 3+\textcolor{black}{2}S_1+S_1^2+3S_1^4+S_1^2T_t^{-1}\bigg(\abs*{A}+\frac{\epsilon}{2}\bigg)+3S_1^2T_t^{-2}\abs*{A}^2  \bigg)\abs*{\nabla_{\! z}h}^2.
\end{align*}
Lastly, $\Gamma_{L_t^{\epsilon *},\Phi_1}$ compensates for the negative $x$-derivative:
\begin{align*}
h\Gamma_{L_t^{\epsilon *},\Phi_1}(h)& > (2\nabla\!_x+8S_0(\nabla\!_y+ \lambda^{-1}\nabla\!_z))h\cdot [L_t^{\epsilon *},2\nabla\!_x+8S_0(\nabla\!_y+ \lambda^{-1}\nabla\!_z)]h\\
& = (2\nabla\!_x+8S_0(\nabla\!_y+ \lambda^{-1}\nabla\!_z))h\cdot(-2(D_x^2 U )(\nabla\!_y - \epsilon T_t^{-1} \nabla\!_x)\\
&\quad +8S_0(\nabla\!_x + \epsilon T_t^{-1}\nabla\!_y - \lambda^\top \nabla\!_z + \nabla\!_y+T_t^{-1}\lambda^{-1}A\nabla\!_z))h\\
&= ((16S_0I_n + 4\epsilon T_t^{-1} D_x^2 U)\nabla\!_x h) \cdot \nabla\!_x h +2\nabla_{\!x}h\cdot((-2D_x^2U + 8S_0(1+\epsilon T_t^{-1})I_n)\nabla_{\!y}h) \\
&\quad + 2\nabla\!_x h\cdot (8S_0(-\lambda^\top+T_t^{-1}\lambda^{-1}A)\nabla\!_z h) + ((64S_0^2I_n + 16S_0\epsilon T_t^{-1}D_x^2 U)\nabla\!_x h) \cdot \nabla\!_y h \\
&\quad + 8S_0\nabla_{\!y}h\cdot((-2D_x^2U + 8S_0)\nabla_{\!y}h) + 8S_0\nabla\!_y h \cdot (8S_0(-\lambda^\top+T_t^{-1}\lambda^{-1}A)\nabla\!_z h)\\
&\quad + ((64S_0^2I_n + 16S_0\epsilon T_t^{-1}D_x^2U)\nabla\!_x h) \cdot (\lambda^{-1}\nabla\!_z h) \\
&\quad + ((-16S_0D_x^2U + 64S_0^2(1+\epsilon T_t^{-1})I_n)\nabla_{\!y}h) \cdot (\lambda^{-1}\nabla\!_z h)\\
&\quad + 64S_0^2(\lambda^{-1}\nabla\!_z h) \cdot ((-\lambda^\top+T_t^{-1}\lambda^{-1}A)\nabla\!_z h).
\end{align*}
\textcolor{black}{Bounding the $\partial_i h\partial_j h$ terms as for $\Phi_2$}, using \eqref{lambdahat} and \eqref{secbdd} yields
\begin{align*}
h\Gamma_{L_t^*,\Phi_1}(h)&> (16S_0 - 4\epsilon T_t^{-1}|D_x^2 U|_\infty) \abs*{\nabla\!_x h}^2\\
&\quad - \Big( 2\abs*{\nabla_{\!x}h}^2 + 2|D_x^2U|_\infty^2\abs*{\nabla_{\!y}h}^2 + 8(1+\epsilon T_t^{-1})\abs*{\nabla_{\!x}h}^2 + 8S_0^2(1+\epsilon T_t^{-1})\abs*{\nabla_{\!y}h}^2\Big)\\
&\quad - \Big(2\abs*{\nabla\!_x h}^2 + 32S_0^2\hat{\lambda}^2\Big(1+T_t^{-2}\abs*{A}^2\Big)\abs*{\nabla\!_z h}^2\Big)\\
&\quad - \Big((1+8S_0\epsilon T_t^{-1}|D_x^2 U|_\infty^2)\abs*{\nabla_{\!x}h}^2 + (1024S_0^4 + 8S_0\epsilon T_t^{-1})\abs*{\nabla_{\!y}h}^2 \Big) \\
&\quad - \Big(16S_0|D_x^2U|_\infty\abs*{\nabla_{\!y}h}^2 \textcolor{black}{-} 64S_0^2\abs*{\nabla_{\!y}h}^2 \Big) - \Big( 32S_0^2\abs*{\nabla_{\!y}h}^2 \!+ 32S_0^2\hat{\lambda}^2\Big(\!1+T_t^{-2}\abs*{A}^2\Big)\abs*{\nabla_{\!z}h}^2\!\Big)\\
&\quad - \Big( (1+8S_0\epsilon T_t^{-1}|D_x^2 U|_\infty^2)\abs*{\nabla\!_x h}^2 + (1024S_0^{\textcolor{black}{4}}+ 8S_0\epsilon T_t^{-1})\hat{\lambda}^2\abs*{\nabla\!_z h}^2\Big) \\
&\quad - \Big(\Big(2|D_x^2U|_\infty^2+ 32S_0^2(1+\epsilon T_t^{-1})\Big)\abs*{\nabla_{\!y}h}^2 + (32S_0^2 + 32\epsilon T_t^{-1} S_0^2)\hat{\lambda}^2\abs*{\nabla\!_z h}^2\Big)\\
&\quad   - 64S_0^2\hat{\lambda}^2\Big(1 + T_t^{-2}\abs*{A}^2\Big)\abs*{\nabla\!_z h}^2\\
&\geq \Big(2 - 4(2 + (1+4S_0^2) S_0)\epsilon T_t^{-1}\Big)\abs*{\nabla\!_x h}^2 + \Big(S_0^2 ( -\textcolor{black}{28}-1024S_0^2 ) - 8 S_0 (1 + 5S_0)\epsilon T_t^{-1}\Big)\abs*{\nabla\!_y h}^2\\
&\quad - \Big(S_0^2\hat{\lambda}^2(\textcolor{black}{160+128T_t^{-2}\abs*{A}^2 + 1024S_0^2}) + 8S_0\hat{\lambda}^2 (1+4S_0)\epsilon T_t^{-1}\Big) \abs*{\nabla_{\! z} h}^2.
\end{align*}
Matching powers in $T_t^{-1}$ to take
\begin{subequations}\label{betacs}
\begin{align}
\beta_0 &= \frac{1}{A_c}( S_0^2 \hat{\lambda}^2(160 + 1024S_0^2) + \frac{1}{2}\hat{\lambda}^2(3+2S_1 + S_1^2+ 3S_1^4 ))\\
\beta_1 &= \frac{1}{A_c}\bigg( 4(2+(1+4S_0^2)S_0) + 8S_0(1+5S_0) + 8S_0 \hat{\lambda}^2(1+4S_0) + \frac{1}{2} \hat{\lambda}^2\bigg(S_1^2\bigg(\abs*{A}+\frac{1}{2}\bigg)\bigg)\bigg)\\
\beta_2 &= \frac{1}{A_c}\bigg( 128 S_0^2\hat{\lambda}^2 \abs*{A}^2 + \frac{3}{2} \hat{\lambda}^2 S_1^2\abs*{A}^2\bigg),
\end{align}
\end{subequations}
using $\epsilon \leq 1$ and putting together the bounds for $\Gamma_{L_t^{\epsilon *},\Phi_3}, \Gamma_{L_t^{\epsilon *},\Phi_2},\Gamma_{L_t^{\epsilon *},\Phi_1}$ gives \eqref{gamineq}.
\end{proof}

\subsection{Log-Sobolev Inequality}

\begin{proof}\textit{of Proposition \ref{logsobprop0}} Firstly, the case that $U$ satisfies \eqref{q1} is dealt with. The standard log-Sobolev inequality for a Gaussian measure \cite{Gross} 
alongside the properties that log-Sobolev inequalities tensorises and are stable under perturbations, which can be found as Theorem 4.4 and Property 4.6 in \cite{Zega} 
respectively, yields the result. In particular,
\begin{align*}
\int h\ln h d\mu_{T_t} &= \int (h \ln h-h+1)d\mu_{T_t}\\
&\leq\int(h \ln h-h+1)Z_{T_t}^{-1}e^{-\frac{U_{\!m}}{T_t}}e^{-\frac{1}{T_t}\big(\abs*{\bar{a}\circ x}^2+\frac{\abs*{y}^2}{2}+\frac{\abs*{z}^2}{2}\big)} dxdydz\\
&= e^{-\frac{U_{\!m}}{T_t}}Z_{T_t}^{-1}\int h \ln h e^{-\frac{1}{T_t}\big(\abs*{\bar{a}\circ x}^2+\frac{\abs*{y}^2}{2}+\frac{\abs*{z}^2}{2}\big)}dxdydz\\
&\leq e^{-\frac{U_{\!m}}{T_t}}\!\max{\bigg(\frac{T_t}{2},\max_i\frac{T_t}{4\bar{a}_i^2}\bigg)}Z_{T_t}^{-1}\!\!\int\! \frac{\abs*{\nabla h}^2}{h_t} e^{-\frac{1}{T_t}\big(\abs*{\bar{a}\circ x}^2+\frac{\abs*{y}^2}{2}+\frac{\abs*{z}^2}{2}\big)}dxdydz\\
&\leq e^{\frac{U_{\!M}-U_{\!m}}{T_t}} \max{\bigg(\frac{T_t}{2},\frac{T_t}{4a_m^2}\bigg)}\int \frac{\abs*{\nabla h}^2}{h} d\mu_{T_t},
\end{align*}
where the first inequality follows by \eqref{q1} since $x\ln x-x+1\geq0$ for all $x\geq0$, so that 
\begin{equation*}
C_t^{(0)} = \max{\Big(2,a_m^{-2}\Big)}\frac{T_t}{4}e^{(U_{\!M}-U_{\!m})T_t^{-1}}
\end{equation*}
For the case where $U$ is a nonnegative nondegenerate Morse function satisfying \eqref{q1menz}, the inequality in the $x$-marginals is taken as a consequence of Corollary 2.17 in \cite{Sch}; for the announced form \eqref{logSobConst0} of $C_t^{(0)}$, equation (2.18) in \cite{Sch} can be used by taking $t_{ls}^{(0)}$ large enough such that for $t>t_{ls}^{(0)}$, $T_t$ is small enough. 
As before, tensorisation with the inequality for Gaussian measures concludes.
\end{proof}

\begin{prop}\label{logsobprop}
Under Assumption \ref{assumption1}, \ref{assumption2} and for $\epsilon \geq 0$, there exists constants $t_{ls}, A_*>0$ and a finite order polynomial $r:\mathbb{R}_+\rightarrow\mathbb{R}_+$ with coefficients depending on $U$ and $\lambda$ but independent of $\epsilon$ such that the distorted entropy \eqref{entdef} satisfies
\begin{equation}\label{logSob}
H^\epsilon(t) \leq C_t \int\frac{\abs*{\nabla h_t^\epsilon}^2}{h_t^\epsilon}d\mu_{T_t},
\end{equation}
where for $t>t_{ls}$,
\begin{equation}\label{logSobConst}
C_t = A_* + r\Big(T_t^{-\frac{1}{2}}\Big) e^{\hat{E}T_t^{-1}}.
\end{equation}
\end{prop}
\begin{proof}
Given Proposition \ref{logsobprop0}, only the first two terms in the integrand of $H^\epsilon(t)$ are left, which lead directly to the inequality corresponding to $A_*$. 
\end{proof}

\subsection{Proof of Dissipation}\label{prodis}

Lemma \ref{eta} below constructs a sequence of compactly supported functions that are multiplied with the integrand in $H(t)$. It
gives sufficient properties for retrieving a bound on $\partial_tH(t)$ after passing the derviative under the integral sign and passing the limit in the sequence of approximating initial densities. The key sufficient property turns out to be (\ref{etaprop}) below.\\
Let $\varphi_k$ be given as in \eqref{molli},
\begin{equation*}
\nu_k:=\varphi_k* \mathds{1}_{(-\infty,k^2]}\leq 1
\end{equation*}
for $k>0$.

\begin{lemma}\label{eta}
For $k>0$, define the smooth functions $\eta_{k}:\mathbb{R}^{2n+m+1}\rightarrow\mathbb{R}$
\begin{equation*}
\eta_{k}=\nu_k(\textcolor{black}{-}\ln (R+2d)),
\end{equation*}
where $d>0$ is the same as in \eqref{Rbd}. The following properties hold:
\begin{enumerate}
\item $\eta_k$ is compactly supported;
\item $\eta_k$ converges to 1 pointwise as $k\rightarrow\infty$;
\item for some constant $C>0$ independent of $k$, $t$ and $0 \leq\epsilon\leq \min(1,\epsilon')$
\begin{equation}\label{etaprop}
L_t^\epsilon\eta_k\leq \frac{CT_t^{-1}}{k}.
\end{equation}
\end{enumerate}
\end{lemma}

\begin{proof}
By the quadratic assumption \eqref{q1menz} on $U$ and the bound \eqref{Rbd} on $R$, $R$ grows quadratically and in particular for an arbitrarily large constant $R_{(0)}>0$, a compact set $K$ can be chosen such that $R>R_{(0)}$ in $\mathbb{R}^{2n+m}\setminus K$; along with the support of $\nu_m$ being bounded below, the first statement is clear. The second statement is also trivial to check.\\
For the third statement, using the chain rule \eqref{Lchain1} and \eqref{Lchain2} for $L_t$,
\begin{equation*}
L_t^\epsilon\eta_m = -\nu_m'(-\ln(R+2d))L_t^\epsilon\ln(R+2d)+\nu_m''(-\ln(R+2d))(\nabla\ln(R+2d))^\top A^\epsilon \nabla \ln(R+2d).
\end{equation*}
It can be seen that $\nu_m'$ and $\nu_m''$ are estimated by terms at most of order $m^{-1}$; to see this, for all $x\in\mathbb{R}$,
\textcolor{black}{\begin{equation*}
\nu_m(x)=\int_{-\infty}^{m^2}\varphi_m(x-y)dy = \int_{x-m^2}^{\infty}\varphi_m(z)dz,
\end{equation*}
so that
\begin{equation*}
0 \geq \nu_m'(x) = -\varphi_m(x-m^2) \geq -m^{-1}\max \varphi
\end{equation*}
and
\begin{equation*}
\abs*{\nu_m''(x)} = \abs*{\varphi_m'(x-m^2)}\leq m^{-2}\max \varphi'.
\end{equation*}}
Therefore there exists a constant $\bar{C}>0$ such that
\textcolor{black}{\begin{align*}
L_t^\epsilon\eta_m &\leq -\nu_m'(-\ln (R+2d))\max(0,L_t^\epsilon\ln(R+2d)) + m^{-2}\max\varphi' \abs*{(\nabla \ln(R+2d))^\top A^\epsilon \nabla\ln(R+2d)}.\\
&\leq \bar{C}\Big(m^{-1}\max(0,L_t^\epsilon \ln(R+2d))+m^{-2}\abs*{(\nabla\ln(R+2d))^\top A^\epsilon \nabla\ln(R+2d)}\Big).
\end{align*}}
A calculation using property \eqref{Lyaprop} with \eqref{Lchain1} and \eqref{Lchain2} for $L_t^\epsilon$ reveals
\begin{align*}
L_t^\epsilon \ln(R+2d)&=\frac{L_t^\epsilon R}{R+2d}-\frac{(\nabla R)^\top A^\epsilon \nabla R}{(R+2d)^2}\\
&\leq \frac{-cT_tR+dT_t^{-1}}{R+2d}-\frac{\epsilon(\abs*{\nabla\!_xR}^2 + \abs*{\nabla\!_yR}^2) +A_c\abs*{\nabla\!_zR}^2}{(R+2d)^2}\\
&\leq \frac{-cT_t(R+d)+cT_td+dT_t^{-1}}{R+2d}-\frac{\epsilon(\abs*{\nabla\!_xR}^2 + \abs*{\nabla\!_yR}^2) + A_c\abs*{\nabla\!_zR}^2}{(R+2d)^2}\\
(\nabla\ln(R+2d))^\top A^\epsilon \nabla\ln(R+2d)&\leq(\abs*{A}+2)\abs*{\nabla \ln(R+2d)}^2 \\
&= (\abs*{A}+2)\abs*{\frac{\nabla R}{R+2d}}^2,
\end{align*}
which are bounded above as claimed considering \eqref{Rbd} and that $\nabla R$ grows linearly in space and is uniformly bounded in time.
\end{proof}

\begin{remark}\label{trunc_issue}\color{black}
Lemma \ref{eta} is different to Lemma 16 in \cite{meta}. We believe the first few equations in the 
proof of Lemma 16 in \cite{meta} contain a sign error; as a consequence the proofs in \cite{meta} beyond that point require significant modifications. Here we address this by modifying the truncation arguments we require proving \eqref{etaprop} instead of Lemma 17 of \cite{meta}. In addition, the finiteness of the distorted entropy is required, which is the reason for using the perturbed dynamics in \eqref{glep0} and then later taking $\epsilon\rightarrow 0$. 
\end{remark}

The proof of Proposition \ref{dissipation} follows in the direction of Lemma 19 of \cite{meta}.

\begin{prop}\label{dissipation}
Under Assumption \ref{assumption1}, \ref{assumption2}, \ref{assumption3} and \ref{assumption4} and for $0 < \epsilon \leq \min(1,\epsilon')$, 
it holds that for any $0<\alpha\leq \frac{1}{2}(1-\frac{\hat{E}}{E})$, there exists some constant $B>0$ and some $t_H>0$ {both independent of $\epsilon$}, such that for all $t>t_H$,
\begin{equation}\label{H_bound}
H^\epsilon(t)\leq B\bigg( \frac{1}{t}\bigg)^{1-\frac{\hat{E}}{E}-2\alpha}.
\end{equation}
\end{prop}
\begin{proof}
Consider for $t\geq 0$ the auxiliary distorted entropies
\begin{align}
H_k^\epsilon(t) &= \int \eta_k\bigg( \frac{\abs*{2\nabla\!_x h_t^\epsilon+8S_0(\nabla\!_y h_t^\epsilon + \lambda^{-1}\nabla\!_z h_t^\epsilon)}^2}{h_t^\epsilon}+\ \frac{\abs*{\nabla\!_y h_t^\epsilon + S_1\lambda^{-1}\nabla\!_z h_t^\epsilon}^2}{h_t^\epsilon} + \beta(T_t^{-1})h_t^\epsilon \ln(h_t^\epsilon)\bigg)d\mu_{T_t}\nonumber\\
& = \int\eta_k(\Phi_1(h_t^\epsilon)+\Phi_2(h_t^\epsilon)+\beta(T_t^{-1})\Phi_3(h_t^\epsilon))d\mu_{T_t}\nonumber\\
& = \int\eta_k\Psi_{T_t}(h_t^\epsilon)d\mu_{T_t},\label{mkH}
\end{align}
where $h_t^\epsilon=m_t^\epsilon\mu_{T_t} ^{-1}$ is as defined \eqref{ht}, $\Phi_1$, $\Phi_2$, $\Phi_3$ is as in \eqref{thePhis} and $\eta_k$ are as in Lemma \ref{eta}. Due to $\eta_k$, the order between the time derivative and the integral can be exchanged:
\begin{equation}\label{diffH}
\frac{d}{dt} H_k^\epsilon(t) = \int\eta_k\partial_t(\Psi_{T_t}(h_t^\epsilon))d\mu_{T_t} + T_t'\int \eta_k\partial_{T_t}(\Psi_{T_t}(h_t^\epsilon)\mu_{T_t})dxdydz.
\end{equation}
The terms will be considered separately.
Since $m_t^\epsilon$ is the density of the law of \eqref{glep0} and $L_t^{\epsilon *}$ is the $L^2(\mu_{T_t})$ adjoint of $L_t^\epsilon$, by It\^{o}'s rule for smooth compactly supported $f$ on $\mathbb{R}^{2n+m}$,
\begin{equation}\label{eqforh}
\int f\partial_t m_t^\epsilon = \partial_t\int f m_t^\epsilon = \int L_t^\epsilon f m_t^\epsilon = \int L_t^\epsilon f \frac{m_t^\epsilon}{\mu_{T_t}}\mu_{T_t} = \int f L_t^{\epsilon *}\bigg(\frac{m_t^\epsilon}{\mu_{T_t}}\bigg)\mu_{T_t}.
\end{equation}
The first term in \eqref{diffH} is then bounded as follows.
\begin{align}
\int\eta_k\partial_t(\Psi_{T_t}(h_t^\epsilon))d\mu_{T_t} &= \int \eta_k d\Psi_{T_t}(h_t^\epsilon).\partial_t h_t^\epsilon d\mu_{T_t}\nonumber\\
&= \int \eta_k d\Psi_{T_t}(h_t^\epsilon).\frac{\partial_t m_t^\epsilon}{\mu_{T_t}}d\mu_{T_t}\nonumber\\
&= \int \eta_k d\Psi_{T_t}(h_t^\epsilon).L_t^{\epsilon *}h_t^\epsilon d\mu_{T_t}\nonumber\\
&=-\int 2\eta_k\Gamma_{L_t^{\epsilon *},\Psi_{T_t}}(h_t^\epsilon)d\mu_{T_t}+\int  \eta_k L_t^{\epsilon *}(\Psi_{T_t}(h_t^\epsilon))d\mu_{T_t}\nonumber\\
&=-\int 2\eta_k \Gamma_{L_t^{\epsilon *},\Psi_{T_t}}(h_t^\epsilon)d\mu_{T_t} +\int L_t^\epsilon \eta_k \Big(\Psi_{T_t}(h_t^\epsilon)+\beta(T_t^{-1})e^{-1}\Big)d\mu_{T_t}\nonumber\\
&\leq - 2 \int\! \eta_k \frac{\abs{\nabla h_t^\epsilon}^2}{h_t^\epsilon}d\mu_{T_t}+\frac{CT_t^{-1}}{k}\int\!\Big(\Psi_{T_t}(h_t^\epsilon) + \beta(T_t^{-1})e^{-1}\Big)d\mu_{T_t},\label{4.3.1}
\end{align}
using Proposition \ref{gambound} and Lemma \ref{eta}, where $\beta(T_t^{-1})e^{-1}\int L_t^{\epsilon *}\eta_kd\mu_{T_t}=0$ is added to force 
\begin{equation*}
\beta(T_t^{-1})(h_t^\epsilon\ln h_t^\epsilon+e^{-1})\geq0,\quad \textrm{so that}\quad \Psi_{T_t}(h_t^\epsilon)+\beta(T_t^{-1})e^{-1}\geq0.
\end{equation*}
For the second term in (\ref{diffH}), consider the $\Phi_1$ and $\Phi_2$ terms in the integrand $\eta_k\partial_{T_t}(\Psi_{T_t}\mu_{T_t})=\eta_k\partial_{T_t}((\Phi_1+\Phi_2+\beta(T_t^{-1})\Phi_3)\mu_{T_t})$ of $H_{k}(t)$ with the forms
\begin{equation*}
\partial_{T_t}(\Phi_i(h_t^\epsilon)\mu_{T_t}) = \partial_{T_t}\abs*{M_i\nabla\ln\bigg(\frac{m_t^\epsilon}{\mu_{T_t}}\bigg)}^2m_t^\epsilon,\qquad i=1,2
\end{equation*}
for the corresponding matrices $M_1$ and $M_2$ depending on $S_0$, $S_1$ and $\lambda$. Applying the partial derivative in $T_t$, 
\begin{equation}\label{Mpart}
\partial_{T_t}(\Phi_i(h_t^\epsilon)\mu_{T_t}) = -2(M_i\nabla\ln h_t^\epsilon\cdot M_i\nabla\partial_{T_t}\ln\mu_{T_t})m_t^\epsilon,
\end{equation}
and using definition \eqref{equilibrium} for $\mu_{T_t}$ and $Z_{T_t} = \int_{\mathbb{R}^{2n+m}} e^{-\frac{1}{T_t}\big(U(x)+\frac{\abs*{y}^2}{2}+\frac{\abs*{z}^2}{2}\big)}dxdydz$, gives
\begin{align}
\partial_{T_t}\ln\mu_{T_t} &= \mu_{T_t}^{-1}\partial_{T_t}\bigg(Z_{T_t}^{-1}e^{-\frac{1}{T_t}\big(U(x)+\frac{\abs*{y}^2}{2}+\frac{\abs*{z}^2}{2}\big)}\bigg)\nonumber\\
&=\mu_{T_t}^{-1}\Bigg(\!- Z_{T_t}^{-2}\partial_{T_t}Z_{T_t}+\frac{Z_{T_t}^{-1}}{T_t^2}\Bigg(U(x)+\frac{\abs*{y}^2}{2}+\frac{\abs*{z}^2}{2}\Bigg)\Bigg)e^{-\frac{1}{T_t}\big(U(x)+\frac{\abs*{y}^2}{2}+\frac{\abs*{z}^2}{2}\big)}\nonumber\\
&=\mu_{T_t}^{-1}\Bigg(\!-\mu_{T_t}Z_{T_t}^{-1}\partial_{T_t}Z_{T_t}+\frac{\mu_{T_t}}{T_t^2}\Bigg(U(x)+\frac{\abs*{y}^2}{2}+\frac{\abs*{z}^2}{2}\Bigg)\Bigg)\nonumber\\
&= - \int \frac{1}{T_t^2}\Bigg(U(x)+\frac{\abs*{y}^2}{2}+\frac{\abs*{z}^2}{2}\Bigg)d\mu_{T_t} + \frac{1}{T_t^2}\Bigg(U(x)+\frac{\abs*{y}^2}{2}+\frac{\abs*{z}^2}{2}\Bigg).\label{spacef}
\end{align}
Note the exchange in differentiation and integration is justified by the quadratic bounds \eqref{q1menz} on $U$. \textcolor{black}{Integrating by parts in $y$ and $z$ (or simply using formulae for second moments) gives $
\frac{n+m}{2T_t}$ for the $\abs{y}^2$ and $\abs{z}^2$ terms in the first integral. The integral over $U$ can be dealt with using assumptions \eqref{q2} and \eqref{q3}, to be specific:
\begin{align*}
\int U d\mu_{T_t} &\leq \int (a_M^2\abs{x}^2 + U_M) d\mu_{T_t}\leq \int \bigg(\frac{a_M^2}{r_1}(\nabla U\cdot x + U_g) + U_M\bigg) d\mu_{T_t} = \frac{a_M^2}{r_1}( nT_t + U_g ) +U_M\\
\int U d\mu_{T_t} &\geq \int (a_m^2 \abs{x}^2 + U_m) d\mu_{T_t}\\
&\geq \int \bigg(\frac{a_m^2}{r_2+1}(\abs{\nabla U}^2 - U_g + \abs{x}^2) + U_m\bigg)d\mu_{T_t}\\
&\geq \int \bigg(\frac{a_m^2}{r_2+1}( 2\nabla U\cdot x - U_g ) + U_m\bigg)d\mu_{T_t} = \frac{a_m^2}{r_2+1}(2nT_t - U_g) + U_m.
\end{align*}
Plugging into \eqref{spacef} gives}
\begin{equation}\label{trick2}
p_1\Big(T_t^{-1}\Big) \leq \partial_{T_t}\ln\mu_{T_t} - \frac{1}{T_t^2}\Bigg(U(x)+\frac{\abs*{y}^2}{2}+\frac{\abs*{z}^2}{2}-\frac{n+m}{2} T_t\Bigg)\leq p_2\Big(T_t^{-1}\Big).
\end{equation}
where $p_1(x)=-\frac{a_{\!M}^2n}{r_1}x-\Big(\frac{a_{\!M}^2 U_g}{r_1}+U_{\!M}\Big)x^2$ and $p_2(x)=-\frac{2 a_m^2 n}{r_2+1}x+\Big(\frac{a_m^2 U_g}{r_2+1}-U_{\!m}\Big)x^2$.\\
Substituting \eqref{spacef} back into \eqref{Mpart},
\begin{align}
\partial_{T_t}(\Phi_i(h_t^\epsilon)\mu_{T_t}) &\leq \Bigg(\abs*{M_i\nabla\ln h_t^\epsilon}^2+T_t^{-4}\abs*{M_i\nabla\bigg(U(x)+\frac{\abs{y}^2}{2}+\frac{\abs{z}^2}{2}\bigg)}^2\Bigg)m_t^\epsilon\nonumber\\
&\leq \Phi_i(h_t^\epsilon)\mu_{T_t}+\widetilde{C}T_t^{-4}\Big(1+\abs*{x}^2+\abs*{y}^2+\abs*{z}^2\Big)m_t^\epsilon\label{phi12terms}
\end{align}
for a constant $\widetilde{C}\geq 0$ independent of $k$ and $\epsilon$ by the quadratic assumption \eqref{q3} on $\abs*{\nabla_{\!x}U}^2$ and $\eta_m\leq 1$.\\[0.5em]
For the last integrand in the last term of the right hand side of \eqref{diffH}, namely the derivative over $\Phi_3(h_t^\epsilon)\mu_{T_t} = \frac{m_t^\epsilon}{\mu_{T_t}}\ln\frac{m_t^\epsilon}{\mu_{T_t}}\mu_{T_t}$, the left inequality of \eqref{trick2} gives
\begin{align}
&\partial_{T_t}(\beta(T_t^{-1})\Phi_3(h_t^\epsilon)\mu_{T_t}) \nonumber\\
&= -T_t^{-2}\beta'(T_t^{-1})\Phi_3(h_t^\epsilon)\mu_{T_t} + \beta(T_t^{-1})\partial_{T_t}\ln\frac{m_t^\epsilon}{\mu_{T_t}}m_t^\epsilon\nonumber\\
&= -T_t^{-2}\beta'(T_t^{-1})(\Phi_3(h_t^\epsilon)+e^{-1})\mu_{T_t} + T_t^{-2}\beta'(T_t^{-1})e^{-1}\mu_{T_t} - \beta(T_t^{-1})\partial_{T_t}\ln\mu_{T_t}m_t^\epsilon\nonumber\\
& \leq T_t^{-2}\beta'(T_t^{-1})e^{-1}\mu_{T_t} + \beta(T_t^{-1})\abs*{p_1\Big(T_t^{-1}\Big) + \frac{1}{T_t^2}\bigg(\!-\frac{n+m}{2}T_t\! +U_{\!M}\!+a_M\abs*{x}^2\!+\!\frac{\abs{y}^2}{2}\!+\!\frac{\abs{z}^2}{2}\bigg)}m_t^\epsilon, \label{phi3term}
\end{align}
where in the last step $\Phi_3+e^{-1} \geq 0$, $\beta_1,\beta_2>0$ and \eqref{q1menz} have been used. 
Putting together the bounds \eqref{phi12terms} and \eqref{phi3term} and applying Corollary \ref{moments} yields
\begin{align}
\int \eta_k\partial_{T_t}(\Psi_{T_t}(h_t^\epsilon)\mu_{T_t})d\zeta &\leq q\Big(T_t^{-1}\Big)\Big(H_k^\epsilon(t)+\mathbb{E}\Big[1+\abs*{X_t^\epsilon}^2+\abs*{Y_t^\epsilon}^2+\abs*{Z_t^\epsilon}^2\Big]\Big)\nonumber\\
&\leq p\Big(T_t^{-1}\Big)\Big(H_k^\epsilon(t)+\hat{C}\Big), \label{4.3.2}
\end{align}
where $p$ and $q$ are some finite order polynomials with nonnegative coefficients, $\hat{C}> 0$, both independent of $k$ and $\epsilon$.\\[1em]
Returning to \eqref{diffH}, collecting \eqref{4.3.1} and \eqref{4.3.2} then integrating from any $s\geq 0$ to $t>s$ gives
\begin{align}
H_k^\epsilon(t)-H_k^\epsilon(s) &\leq 2\int_s^t \!\! \bigg(-\int \eta_k \frac{\abs{\nabla h_u^\epsilon}^2}{h_u}d\mu_{T_u}+ \frac{CT_u^{-1}}{k}(H^\epsilon(u)+ \beta(T_u^{-1})e^{-1}) + \abs*{T_u'}p\Big(T_u^{-1}\Big)\Big(H_k^\epsilon(u)+\hat{C}\Big)\bigg)du.\label{ineqint}
\end{align}
Fix an arbitrary $S>0$. By the square integrability Theorem 7.4.1 in \cite{MR3443169}, the log-Sobolev inequality \eqref{logSob}, \eqref{q3} and the finiteness of second moments \eqref{prop7}, it holds that
\begin{align}
\int_0^S H^\epsilon(u) du &\leq \int_0^S C_u \int\frac{\abs*{\nabla h_u^\epsilon}^2}{h_u^\epsilon} d\mu_{T_u}du\nonumber\\
&= \int_0^S C_u \int\frac{\abs*{\nabla m_u^\epsilon + T_u^{-1}m_u^\epsilon (\nabla\!_x U + y + z)}^2}{m_u^\epsilon} dxdydzdu < \infty.\label{741finite}
\end{align}
Then in \eqref{ineqint} the $k\rightarrow\infty$ limit can be taken. Due to \eqref{741finite}, the term denominated by $k$ goes to zero. Applying Fatou's lemma (adding and subtracting $\beta(T_t^{-1})e^{-1}\int\eta_md\mu_{T_t}$ wherever necessary for positivity) and using $\eta_m\leq1$, it holds that for $s<t$,
\begin{equation}\label{mid}
H^\epsilon(t)-H^\epsilon(s) \leq -2\int_s^t \!\! \int \frac{\abs{\nabla h_u^\epsilon}^2}{h_u^\epsilon}d\mu_{T_u}du + \int_s^t\!\abs*{T_u'}p\Big(T_u^{-1}\Big)\Big(H^\epsilon(u)+\hat{C}\Big)du
\end{equation}
and for\footnote{$t_{ls}$ from Proposition \ref{logsobprop}} $t_{ls}<s<t$,
\begin{equation}
H^\epsilon(t)-H^\epsilon(s) \leq \int_s^t\!\! \bigg(\!\Big(\abs*{T_u'}p\Big(T_u^{-1}\Big)- 2 C_u^{-1}\Big)H^\epsilon(u)+\hat{C}\abs*{T_u'}p\Big(T_u^{-1}\Big)\bigg)du. \label{ineqint2}
\end{equation}
Since $t^\alpha\gg(\ln t)^{\frac{\rho}{2}}$ for any $\rho,\alpha>0$ and large enough $t>0$, for any $\alpha>0$, there exists $t_1>\max(t_{ls},t_0)$, where $t_0$ is as in Assumption \ref{assumption3}, and $c_1,c_2>0$ independent of $k,\epsilon$ such that for all $t\geq t_1$,
\begin{align}
\abs*{T_t'}p\Big(T_t^{-1}\Big) &\leq c_1\bigg(\frac{1}{t}\bigg)^{1-\alpha},\label{coeff1}\\
- 2C_t^{-1} &\leq - c_2\bigg(\frac{1}{t}\bigg)^{\frac{\hat{E}}{E}+\alpha},\label{coeff2}
\end{align}
where the assumption $T_t \geq \frac{E}{\ln t}$ and \eqref{logSobConst} have been used. Using further that $E > \hat{E}$ by Assumption \ref{assumption3}, then taking $\alpha < \frac{1}{2}(1-\frac{\hat{E}}{E})$, there exists $t_2\geq t_1$ independent of $\epsilon$ such that for $t\geq t_2$,
\begin{equation}\label{newbb}
\abs*{T_t'}p\Big(T_t^{-1}\Big) - 2C_t^{-1}  \leq -c_3\bigg(\frac{1}{t}\bigg)^{\frac{\hat{E}}{E} + \alpha}
\end{equation}
and from \eqref{ineqint2}, for $t_2 < s < t$,
\begin{equation}\label{div}
H^\epsilon(t)-H^\epsilon(s) \leq \int_s^t \bigg(- c_3 \bigg(\frac{1}{u}\bigg)^{\frac{\hat{E}}{E} + \alpha}H^\epsilon(u)+\hat{C}c_1\bigg(\frac{1}{u}\bigg)^{1-\alpha}\bigg)du.
\end{equation}
To obtain the corresponding differential inequality for all time, \eqref{div} can be divided by $t-s$, mollified with \eqref{molli} for $0<k<1$ and the limit $s\rightarrow t$ can be taken:
\begin{align*}
\lim_{\hat{\epsilon}\rightarrow 0}\frac{1}{2\hat{\epsilon}}\int_{t-1}^{t+1} &\varphi_k(t - u) (H^\epsilon(u+\hat{\epsilon}) - H^\epsilon(u-\hat{\epsilon})) du\\
&\leq \lim_{\hat{\epsilon}\rightarrow 0} \frac{1}{2\hat{\epsilon}} \int_{t-1}^{t+1} \varphi_k(t-u) \int_{u-\hat{\epsilon}}^{u+\hat{\epsilon}}\bigg(- c_3 \bigg(\frac{1}{u'}\bigg)^{\frac{\hat{E}}{E} + \alpha}H^\epsilon(u')+\hat{C}c_1\bigg(\frac{1}{u'}\bigg)^{1-\alpha}\bigg)du'du\\
&\leq \int_{t-1}^{t+1} \varphi_k(t-u) \lim_{\hat{\epsilon}\rightarrow 0} \frac{1}{2\hat{\epsilon}} \int_{u-\hat{\epsilon}}^{u+\hat{\epsilon}}\bigg(- c_3 \bigg(\frac{1}{u'}\bigg)^{\frac{\hat{E}}{E} + \alpha}H^\epsilon(u')+\hat{C}c_1\bigg(\frac{1}{u'}\bigg)^{1-\alpha}\bigg)du'du\\
&= \int_{t-1}^{t+1} \varphi_k(t-u) \bigg(- c_3 \bigg(\frac{1}{u}\bigg)^{\frac{\hat{E}}{E} + \alpha}H^\epsilon(u)+\hat{C}c_1\bigg(\frac{1}{u}\bigg)^{1-\alpha}\bigg)du
\end{align*}
for $t\geq t_2 + 2$, where the second-to-last line follows from Fatou's lemma and dominated convergence (adding and subtracting $\beta(T_{u'}^{-1})e^{-1}$ to $H^\epsilon$ for Fatou); the last equality follows by the Lebesgue differentiation theorem. Therefore
\begin{equation*}
\frac{d}{dt}(\varphi_k\ast H^\epsilon) (t) \leq -c_3\bigg(\frac{1}{t+1}\bigg)^{\frac{\hat{E}}{E} + \alpha}(\varphi_k\ast H^\epsilon) (t) + \hat{C}'\bigg(\frac{1}{t-1}\bigg)^{1-\alpha}
\end{equation*}
for some constant $\hat{C}'>0$ independent of $k,\epsilon$.
Setting
\begin{equation*}
\gamma_1(t) := c_3 \bigg(\frac{1}{t+1}\bigg)^{\frac{\hat{E}}{E}+\alpha},\qquad \gamma_2(t) := \hat{C}'\bigg(\frac{1}{t-1}\bigg)^{1- \alpha}
\end{equation*}
and following the argument as per \cite{meta} from Lemma 6 in \cite{Miclo}, there exists $t_3 \geq t_2+2$, $c_4,c_5,c_6>0$ independent of $k$ and $\epsilon$ such that for $t\geq t_3$,
\begin{equation*}
\frac{d}{dt}\bigg(\frac{\gamma_2}{\gamma_1}\bigg)(t) = \frac{(t+1)^{\frac{\hat{E}}{E}+\alpha}}{(t-1)^{1-\alpha}}\bigg( \frac{c_4}{t+1} - \frac{c_5}{t-1} \bigg) \geq -c_6t^{-1},
\end{equation*}
so that there exists $t_4\geq t_3$ independent of $k$ and $\epsilon$ such that for $t\geq t_4$,
\begin{align*}
\frac{d}{dt}\bigg(\varphi_k\ast H^\epsilon - \frac{2\gamma_2}{\gamma_1}\bigg) (t) \leq -\gamma_1(t)\bigg(\varphi_k\ast H^\epsilon(t) - \frac{2\gamma_2(t)}{\gamma_1(t)}\bigg) 
\end{align*}
and consequently
\begin{equation}\label{mollidineq}
\varphi_k\ast H^\epsilon(t) \leq \frac{2\gamma_2(t)}{\gamma_1(t)} + \varphi_k\ast H^\epsilon(t_4)e^{-\int_{t_4}^t\gamma_1(u)du}.
\end{equation}
Finally, from \eqref{div} (adding and subtracting $\beta(T_{u'}^{-1})e^{-1}$ to $H^\epsilon$), it holds that for $t \geq t_4 + 2$,
\begin{equation}\label{simig}
H^\epsilon(t) = \int_{t-2k}^t \varphi_k(t-k-s)ds H^\epsilon(t)\leq \int_{t-2k}^t\varphi_k(t-k-s)H^\epsilon(s)ds + \tilde{g}(2k)
\end{equation}
for some $\tilde{g}:\mathbb{R}\rightarrow\mathbb{R}$ satisfying $\tilde{g}(k')\rightarrow 0$ as $k'\rightarrow 0$, so that \eqref{mollidineq} yields
\begin{equation*}
H^\epsilon(t) \leq \frac{2\gamma_2(t-k)}{\gamma_1(t-k)} + \varphi_k\ast H^\epsilon(t_4)e^{-\int_{t_4}^{t-k}\gamma_1(u)du} + \tilde{g}(2k),
\end{equation*}
where $\varphi_k\ast H^\epsilon(t_4)$ can be bounded independently of $k$ in a similar spirit to \eqref{simig}, and taking $k\rightarrow 0$ concludes the proof.
\end{proof}

\begin{remark}
The annealing schedule $T_t$ is chosen to satisfy the relationship \eqref{newbb} between $C_t^{-1}$ and $\abs*{T_t'}p\Big(T_t^{-1}\Big)$. 
\end{remark}

\subsection{Degenerate noise limit}\label{genapp}
After taking advantage of the square integrability Theorem 7.4.1 in \cite{MR3443169} for the case with a nondegenerate diffusion term in the proof of Proposition \ref{dissipation}, the $\epsilon\rightarrow 0$ limit is taken to obtain the same dissipation inequality in this section.
\begin{proof}\textit{(of Proposition \ref{dissipation0})} 
From \eqref{mid}, for any $0\leq s<t$ and $0 < \epsilon \leq \min(1,\epsilon')$, it holds that
\begin{equation*}
H^\epsilon(t) - H^\epsilon(s) \leq \int_s^t\abs*{T_u'}p\Big(T_u^{-1}\Big)\Big(H^\epsilon(u)+\hat{C}\Big)du,
\end{equation*}
where $p$ is a finite order polynomial with nonnegative coefficients and $\hat{C}>0$ is a constant both independent of $\epsilon$. Therefore, mollifying in time and taking $s\rightarrow t$ as in the end of the proof by Proposition \ref{dissipation}, it is straightforward that $H^\epsilon$ is uniformly bounded in\footnote{$t_H$ from Proposition \ref{dissipation}} $0\leq t \leq t_H$ and $0< \epsilon \leq \min(1,\epsilon')$. Moreover by Proposition \ref{dissipation}, the entropy $\int h_t^\epsilon \ln h_t^\epsilon d\mu_{T_t}$ is bounded uniformly in $t>t_H$ and $0< \epsilon \leq \min(1,\epsilon')$. Therefore for any $t\geq 0$ by the de la Vallée-Poussin criterion  (see for example \cite{MR1248651}), the subset $\{ h_t^\epsilon: 0< \epsilon \leq \min(1,\epsilon')\}\subset L^1(\mu_{T_t})$ is uniformly integrable and consequently the Dunford-Pettis theorem imposes the existence of a weak limit $g_t\in L^1(\mu_{T_t})$ for a (sub)sequence $(\epsilon_i)_{i\in\mathbb{N}}$ such that $\epsilon_i\rightarrow 0$,
\begin{equation*}
h_t^{\epsilon_i} \rightharpoonup g_t,\quad\textrm{in }L^1(\mu_{T_t})\quad\textrm{as } i\rightarrow \infty.
\end{equation*}
For any $S>0$, any compactly supported smooth test function $\phi:[0,S)\times\mathbb{R}^{2n+m}\rightarrow\mathbb{R}$, omitting the dependence on the space variable $\zeta = (x,y,z)$ wherever convenient and using It\^{o}'s rule,
\begin{align}
0 &= \lim_{i\rightarrow \infty}\int_{(0,S)\times\mathbb{R}^{2n+m}}(m_t^{\epsilon_i} - g_t\mu_{T_t})(-\partial_t - L_t)\phi dtd\zeta\nonumber\\
&= \lim_{i\rightarrow \infty}\int_{(0,S)\times\mathbb{R}^{2n+m}}\epsilon_i m_t^{\epsilon_i}(S_t^x + S_t^y)\phi dtd\zeta + \int_{(0,T)\times\mathbb{R}^{2n+m}} g_t\mu_{T_t}(\partial_t+L_t)\phi dtd\zeta + \int_{\mathbb{R}^{2n+m}}m_0\phi(0,\zeta) dtd\zeta,\nonumber\\
&= \int_{(0,S)\times\mathbb{R}^{2n+m}} g_t\mu_{T_t}(\partial_t+L_t)\phi dtd\zeta + \int_{\mathbb{R}^{2n+m}}m_0\phi(0,\zeta) dtd\zeta,\label{weeq}
\end{align}
so that in the distributional sense of \cite{MR3443169},
\begin{equation}\label{mkeeq00}
\begin{cases}
\partial_t (g_t\mu_{T_t}) = L_t^\top (g_t\mu_{T_t}) & \textrm{on }  \mathbb{R}^{2n+m}\quad\forall t>0,\\
(g_0\mu_{T_0}) = m_0. &
\end{cases}
\end{equation}
By Proposition \ref{refer2app}, the solution to \eqref{mkeeq00} is unique in the class of integrable solutions and since $m_t$ belongs in this same class, it holds that 
\begin{equation*}
g_t\mu_{T_t} = m_t
\end{equation*}
for all $t\in[0,S]$, which is that
\begin{equation*}
m_t^{\epsilon_i} \rightharpoonup m_t,\quad\textrm{in }L^1(\mu_{T_t})\quad\textrm{as } i\rightarrow \infty.
\end{equation*}
for all $0\leq t<S$. By Corollary 3.8 in \cite{MR2759829}, there exists a sequence $(\hat{m}_t^i)_{i\in\mathbb{N}}$ made up of convex combinations of $m_t^{\epsilon_i}$ that converge strongly to $m_t$ in $L^1$, hence a subsequence $(\hat{m}_t^{i_j})_{j\in\mathbb{N}}$ that convergences pointwise almost everywhere. By Fatou's lemma, convexity of $f(x) = x\ln x\geq e^{-1}$ for $x>0$ and Proposition \ref{dissipation}, for $t> t_H$, we get
\begin{align*}
\int h_t \ln h_t d\mu_{T_t} &= \int m_t\ln \bigg(\frac{m_t}{\mu_{T_t}}\bigg)\\
&\leq \liminf_{j\rightarrow \infty} \int \hat{m}_t^{i_j} \ln \bigg(\frac{\hat{m}_t^{i_j}}{\mu_{T_t}}\bigg)\\
&\leq B\bigg( \frac{1}{t}\bigg)^{1-\frac{\hat{E}}{E}-2\alpha}.
\end{align*}
\end{proof}

\section{Conclusions}\label{conclusion}
We explored the possibility of using the generalised Langevin equations in the context of simulated annealing.
Our main purpose was to establish convergence as for the underdamped Langevin equation and provide a proof of concept
in terms of performance improvement. Although the theoretical results hold for any scaling matrix $A$ given the stated restrictions, we saw
in our numerical results that its choice has great impact on the performance. In Section \ref{num}, $A_2,A_3$ or $A_4$ seemed to improve the exploration 
on the state space and/or the success proportion of the algorithm. There is plenty of work still required in terms 
of providing a more complete methodology for choosing $A$. This is left as future work and is also closely linked
with time discretisation issues as a poor choice for $A$ could lead to numerical integration stiffness. 
This motivates the development and study of improved numerical integration schemes, in particular, the extension of the conception and analysis on numerical schemes such as BAOAB \cite{Char} for the Langevin equation for \eqref{gl} and the extension of the work in \cite{glesamp} for non-identity matrices $\lambda$ and $A$. See \cite{2020arXiv201204245L} for work in this direction.

In addition, the system in \eqref{gl} is not the only way to add an auxiliary variable to the underdamped Langevin equations in \eqref{Sim2} 
whilst retaining the appropriate equilibrium distribution. Our choice was motivated by a clear connection to the generalised Langevin equation \eqref{gen} and
link with accelerated gradient descent, 
but it could be the case that a different third or higher order equations could be used with possibly improved performance. Along these lines, 
one could consider adding skew-symmetric terms as in \cite{skewterms}. 
As regards to theory, an interesting extension could involve
establishing how the results here can be extended to establish a comparison of optimisation and sampling 
in a nonconvex setting for an arbitrary number of dimensions similar to \cite{fast}.
We leave for future work finding optimal constants in the convergence results, investigating dependence on parameters
and how the limits of these parameters and constants relate to existing results for the Langevin equation in \eqref{Sim2}
in \cite{meta,Vaes}. 
Finally, one could also aim to extend
large deviation results in \cite{larg2,larg1,larg3} for the overdamped Langevin dynamics to the underdamped and generalised case.

\section*{Acknowledgements}

The authors would like to thank Tony Lelievre, Gabriel Stoltz and Urbain Vaes for their helpful remarks. M.C. was funded under a EPSRC studentship. G.A.P. was partially supported by
the EPSRC through grants EP/P031587/1, EP/L024926/1, and EP/L020564/1. N.K. and G.A.P. were funded in part by JPMorgan Chase $\&$ Co under a J.P. Morgan A.I. Research Awards 2019. Any views or
opinions expressed herein are solely those of the authors listed, and may differ from
the views and opinions expressed by JPMorgan Chase $\&$ Co. or its affiliates. This
material is not a product of the Research Department of J.P. Morgan Securities
LLC. This material does not constitute a solicitation or offer in any jurisdiction.
\bibliography{document}


\begin{appendices}

\section{Additional Results}\label{appendixa}
We present the analog of Proposition \ref{dissipation} for the $T_t=T>0$ sampling case and a result about the choice of the annealing schedule.
{
\begin{proof} \textit{(of Propostion \ref{constT0})}
After Pinsker's inequality \eqref{Pins} and consideration of the definition \eqref{entdef} of $H$, what remains is the partial time derivative part of the proof of Proposition \ref{dissipation}. The proof concludes by the same calculations as in Proposition \ref{dissipation}, keeping in mind $T_t'=0$, until \eqref{ineqint2} followed by the Gr{\"o}nwall argument. Note that \eqref{logSobConst} and \eqref{coeff2} are not required and a log-Sobolev constant (in $t$ also) works, in which case \eqref{logSob} and hence the current argument follow without requiring Assumption \ref{assumption2}. The limiting $\epsilon$ argument as in Proposition \ref{dissipation0} is the same.
\end{proof}
}
\begin{prop}\label{optimal}
Under Assumption \ref{assumption1}, \ref{assumption3} and \ref{assumption4}, the schedule $T_t =\frac{E}{\ln (e+t)}$, $E>\hat{E}$ is optimal in the sense that for any differentiable $f :\mathbb{R}_+\rightarrow \mathbb{R}_+$, if
\begin{equation}\label{formTt}
T_t = \frac{1}{f(t)}\bigg(\frac{\hat{E}}{\ln (e+t)}\bigg),
\end{equation}
$C_t$ is the log-Sobolev factor \eqref{logSobConst} and $p$ is the finite order polynomial with nonnegative coefficients from the proof of Proposition \ref{dissipation}, then the relation
\begin{equation}\label{dineq}
2C_t^{-1}\gg\abs*{T_t'}p\Big(T_t^{-1}\Big)
\end{equation}
holds for large times only if $\limsup_{t\rightarrow\infty} f(t)\leq1$.
\end{prop}

\begin{proof}
Suppose there exists a constant $\delta>0$ and times $(t_i)_{i\in\mathbb{N}}$ such that $0<t_i\rightarrow\infty$ and
\begin{equation*}
f(t_i)\geq 1+\delta \quad\forall i.
\end{equation*}
From \eqref{logSobConst},
\begin{equation*}
C_t^{-1} \sim \mathcal{O}(e^{-\hat{E}T_t^{-1}}T_t^{-1}),
\end{equation*}
which after substituting in \eqref{formTt} gives
\begin{equation}\label{edot}
e^{-\hat{E}T_t^{-1}}T_t^{-1}=(e+t)^{-f(t)}\frac{f(t)\ln(e+t)}{\hat{E}} \sim \mathcal{O}(t^{-f(t)}f(t)\ln t).
\end{equation}
Compare this to
\begin{equation}\label{edot2}
\abs*{T_t'}p\Big(T_t^{-1}\Big) \propto \frac{p(f(t)\ln (e+t))}{(f(t)\ln (e+t))^2}\bigg(\frac{f(t)}{e+t}+\abs*{f'(t)}\ln (e+t)\bigg),
\end{equation}
which has order at least $(tf(t))^{-1}(\ln t)^{-2}$. For $t=t_i$, $i$ large enough, $f(t)\geq 1+\delta$ and so
\begin{equation}\label{contra}
 t^{-f(t)}f(t)\ln t \ll (tf(t))^{-1}(\ln t)^{-2},
\end{equation}
which violates \eqref{dineq}.
\end{proof}

\begin{remark}
One can strengthen the proposition by making precise the form of $p$ from Proposition \ref{dissipation}, which will determine how slowly $f(t)$ is allowed to converge to $1$; in fact $p$ should be at least sixth order. This seems inconsequential with respect to optimality and so is omitted.
\end{remark}

\end{appendices}

\end{document}